\documentclass{article}

\usepackage{amsmath,amsthm,amssymb}
\usepackage{tikz}

\newtheorem{theorem}{Theorem}[section]
\newtheorem{lemma}[theorem]{Lemma}
\newtheorem{proposition}[theorem]{Proposition}
\newtheorem{corollary}[theorem]{Corollary}
\newtheorem{exam}[theorem]{Example}

\newtheorem{ques}{Question}

\newcommand{\ctm}{\mathfrak{c}}

\newcommand{\N}{\mathbb{N}}
\newcommand{\R}{\mathbb{R}}

\newcommand{\K}{\mathcal{K}}
\newcommand{\B}{\mathcal{B}}
\newcommand{\M}{\mathcal{M}}

\newcommand{\tq}{\ge_T}
\newcommand{\te}{=_T}

\newcommand{\omoneom}{(\omega_1,\omega)}
\newcommand{\CD}{\mathop{CD}}

\newcommand{\vx}{\mathbf{x}}
\newcommand{\vy}{\mathbf{y}}
\newcommand{\vz}{\mathbf{z}}
\newcommand{\vn}{\mathbf{n}}
\newcommand{\ssum}{\raisebox{0.3ex}{$\scriptstyle \sum$}}

\DeclareMathOperator{\supp}{supp}
\newcommand{\cl}[1]{\overline{#1}}
\newcommand{\down}[1]{\, \downarrow \! \! #1\,}
\newcommand{\type}[2]{$\langle${}#1,\,{#2}$\rangle$}

\title{Products of Directed Sets with Calibre $(\omega_1,\omega)$}

\author{Paul Gartside and Jeremiah Morgan}

\date{\today}

\begin{document}
\maketitle

\begin{abstract}
A directed set $P$ is calibre $\omoneom$ if every uncountable subset of $P$ contains an infinite bounded subset.  $P$ is productively calibre $\omoneom$ if $P \times Q$ is calibre $\omoneom$ for every directed set $Q$ with calibre $\omoneom$, and $P$ is powerfully calibre $\omoneom$ if the countable power of $P$ is calibre $\omoneom$.  

It is shown that (1) uncountable products are calibre $\omoneom$ only in highly restrictive circumstances, (2) many but not all $\ssum$-products of calibre $\omoneom$ directed sets are calibre $\omoneom$, (3) there are directed sets which are calibre $\omoneom$ but neither productively nor powerfully calibre $\omoneom$, and (4) there are directed sets which are powerfully but not productively calibre $\omoneom$. 

As an application, the position is established of $\sum \omega^{\omega_1}$ in the Tukey order among Isbell's classical 10 directed sets.

\smallskip
Keywords: Chain condition, calibre, Tukey order, productive, powerful.

MSC Classification: 03E04, 06A07, 54B20, 54D30, 54D45, 54E35.
\end{abstract}

\section{Introduction} 

Among all the chain conditions on a directed set, only one, calibre $\omoneom$, is known to be non-productive \emph{in ZFC}. 
Indeed, Todorcevic showed in \cite{Tod} that if $S$ is a stationary and co-stationary subset of $\omega_1$, then $\K(S)$ and $\K(\omega_1\setminus S)$ each have calibre $\omoneom$, but $\K(S)\times \K(\omega_1\setminus S)$ does not have calibre $\omoneom$. 
Here, a directed set is said to be, or have, \emph{calibre $\omoneom$} if each of its uncountable subsets contains a bounded infinite subset, and $\K(X)$ denotes the directed set of all  compact subsets of a space $X$, ordered by inclusion. 
We investigate further the behavior of calibre $\omoneom$ in products. 

Specifically, we show uncountable products of directed sets are only rarely calibre $\omoneom$ (Theorem~\ref{unctble_prod}), and a wide variety of, but not all, $\ssum$-products are calibre $\omoneom$ (Theorem~\ref{Sigma_cosmic_CSB} and Example~\ref{Sigma_ctble}). 
We then turn to finite and countable products. 
Call a directed set $P$  \emph{productively calibre $\omoneom$} if $P \times Q$ is calibre $\omoneom$ for every directed set $Q$ with calibre $\omoneom$, and call $P$ \emph{powerfully calibre $\omoneom$} if the countable power of $P$ is calibre $\omoneom$. 
We show there are directed sets which are calibre $\omoneom$ but neither productively nor powerfully calibre $\omoneom$ (Example~\ref{ex:neither}) and directed sets which are powerfully but not productively calibre $\omoneom$ (Example~\ref{Sigma_ctble}). An example of a productively   but not powerfully calibre $\omoneom$ directed set remains elusive.

\begin{ques}
Is there a directed set $P$ which is productively calibre $\omoneom$ but not powerfully calibre $\omoneom$?
\end{ques}

Our principle tools are the Tukey order and topology. 
If $P$ and $Q$ are directed sets, then $P$ \emph{Tukey quotients} to $Q$, denoted $P \tq Q$, if there is a map  $\phi:P\to Q$ carrying cofinal sets to cofinal sets. 
$P$ and $Q$ are \emph{Tukey equivalent}, denoted $P \te Q$, if both $P \tq Q$ and $Q \tq P$. If $P$ is calibre $\omoneom$ and $P \tq Q$ then $Q$ has calibre $\omoneom$ (see Lemma~\ref{Tukey_pres_cal}).  
The topology is used both to construct directed sets of the form $\K(X)$ with desired chain conditions (in fact, see \cite{FG}, every directed set is Tukey equivalent to some $\K(X)$) and to prove that specific directed sets are calibre $\omoneom$. A convenient result in the latter direction, following from Lemma~\ref{ce_to_om1om}, is that every directed set with a second countable topology in which every convergent sequence has a bounded subsequence  (abbreviated CSBS) is calibre $\omoneom$. 

In \cite{Isbell} Isbell introduced ten directed sets and discussed the relationships between them, establishing that among them at least seven were distinct up to Tukey equivalence. The ten directed sets were a first group of $\mathbf{1}$, $\omega$, $\omega_1$, $\omega \times \omega_1$, $[\omega_1]^{<\omega}$, and a second group, $\omega^\omega$, $\sum \omega^{\omega_1}$, $\text{NWD}$, $\mathcal{Z}_0$ and $\ell_1$.  The members of the first group are all of cofinality no more than $\omega_1$ and are easily distinguished. The first four members can be directly checked to be calibre $\omoneom$, in contrast to $[\omega_1]^{<\omega}$. 

Fremlin \cite{Frem_An} investigated the relationships between the members of Isbell's second group \emph{excluding $\sum \omega^{\omega_1}$}. He also introduced $\mathcal{E}_\mu$, the ideal of all compact subsets of $I$ which are null sets (with respect to the standard Lebesgue measure, $\mu$). 
The task of determining all relations between these five directed sets ($\omega^\omega$, $\mathcal{E}_\mu$, $\text{NWD}$, $\mathcal{Z}_0$ and $\ell_1$)  was only recently completed, see \cite{TodSol}, \cite{Matrai}. Critical to all of these results is the fact that these five directed sets have natural topologies which are second countable and CSBS, and hence they are calibre $\omoneom$.

To date, the position of $\sum \omega^{\omega_1}$ -- the last of Isbell's second five -- was unknown. We complete this line of investigation by showing that $\sum \omega^{\omega_1}$ is strictly above $\omega^\omega$, and incomparable with each of $\mathcal{E}_\mu$, $\text{NWD}$, $\mathcal{Z}_0$ and $\ell_1$. 
Clearly $\sum\omega^{\omega_1} \tq \omega^\omega$, so our claims follow from Proposition~\ref{no_csbs_above}, which says that no second countable CSBS directed set Tukey quotients to  $\sum\omega^{\omega_1}$, and Proposition~\ref{Emu}.

The paper is organized as follows. Section~\ref{sec:pre} covers necessary background material on general Tukey quotients and calibres, topology and topological directed sets, and `types' of directed sets (which allow us to break a directed set into simpler pieces). 
In Section~\ref{sec:prod_thms} we establish all the positive results on calibre $\omoneom$ in products, starting with `large' products and moving through $\ssum$-products, to finite and countable products. 
Section~\ref{sec:exs}, on the other hand, presents our examples. 
In addition to the `not powerful nor productive', `powerful but not productive' and $\ssum$-product examples mentioned above, we start by applying our positive results to two examples from \cite{LV}.
The results on Isbell's directed sets are given in Section~\ref{sec:isbell}.

\section{Preliminaries}\label{sec:pre}

\subsection{Tukey Quotients and Calibres}
A subset of a partially ordered set is \emph{bounded} if it has an upper bound. A partially ordered set $P$ is called a \emph{directed set} if each finite subset of $P$ is bounded.  We call $(P',P)$ a \emph{directed set pair} if $P$ is a directed set and $P'$ is a subset of $P$.  A subset $C$ of $P$  is \emph{cofinal for $(P',P)$} if for each $p'$ in $P'$, there is a $c$ in $C$ with $p'\le c$.

A directed set pair $(Q',Q)$ is called a \emph{Tukey quotient} of the pair $(P',P)$, written $(P',P) \tq (Q',Q)$, if there is a map $\phi : P \to Q$ which takes cofinal sets for $(P',P)$ to cofinal sets for $(Q',Q)$.  If $P'=P$, then we abbreviate the directed set pair $(P',P)$ to just $P$.  Thus, for example, the relation $(P,P)\tq (Q,Q)$ will be written as $P\tq Q$, and this coincides with the usual notion of a Tukey quotient of directed sets.  If $(P',P)\tq (Q',Q)$ and $(Q',Q)\tq (P',P)$, then we say the pairs are \emph{Tukey equivalent} and write $(P',P) =_T (Q',Q)$. 
 A directed set is called \emph{Dedekind complete} if each bounded subset has a least upper bound.  A map $\phi : P\to Q$ between directed sets is called \emph{order-preserving} if $\phi(p_1)\le \phi(p_2)$ in $Q$ whenever $p_1\le p_2$ in $P$.  See \cite{PG_AM} for a proof of the following lemma.

\begin{lemma} \label{tq_dc}
Let $(P',P)$ and $(Q',Q)$ be directed set pairs.

(i) If there is an order-preserving map $\phi: P\to Q$ such that $\phi(P')$ is cofinal for $(Q',Q)$, then $(P',P)\tq (Q',Q)$.

(ii) The converse of (i) holds if $Q$ is Dedekind complete.
\end{lemma}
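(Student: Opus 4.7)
For part (i), the plan is to chase definitions. Given an order-preserving $\phi : P \to Q$ with $\phi(P')$ cofinal for $(Q', Q)$, I will show $\phi$ takes $(P', P)$-cofinal sets to $(Q', Q)$-cofinal sets. Fix a $(P', P)$-cofinal $C \subseteq P$ and a point $q' \in Q'$. Using cofinality of $\phi(P')$, find $p' \in P'$ with $q' \le \phi(p')$; using cofinality of $C$, find $c \in C$ with $p' \le c$; using order-preservation, conclude $\phi(c) \ge \phi(p') \ge q'$.

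For part (ii), the plan is to leverage Dedekind completeness of $Q$ to transform a given Tukey map $\phi : P \to Q$ into an order-preserving $\psi : P \to Q$ with $\psi(P')$ cofinal for $(Q', Q)$. The construction will take the form $\psi(p) = \sup S_p$ for a family of subsets $S_p \subseteq Q$ that grows monotonically with $p$, so that order-preservation of $\psi$ is built in. The natural choice is $S_p = \{q' \in Q' : g(q') \le p\}$, where $g : Q' \to P'$ assigns to each $q' \in Q'$ some $g(q') \in P'$ with $\phi(g(q')) \ge q'$ --- such a $g$ exists because $P'$ is trivially cofinal for $(P', P)$, so by hypothesis $\phi(P')$ is cofinal for $(Q', Q)$. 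With $\psi$ so defined, order-preservation is immediate since $S_p$ grows with $p$, and cofinality of $\psi(P')$ for $(Q', Q)$ follows by taking $p' := g(q') \in P'$: then $q' \in S_{p'}$, so $q' \le \psi(p')$.

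The main obstacle will be ensuring that each $S_p$ is bounded in $Q$ so that its supremum exists. Cofinal-preservation of $\phi$ alone does not force this for an arbitrary choice of $g$: one can build a cofinal-preserving $\phi$ whose principal-downset images are unbounded, and then a naive selection of $g$ also fails to have bounded preimages. The plan to overcome this is to choose $g$ with the Tukey-function property --- that $g$-preimages of $(P', P)$-bounded subsets are bounded in $Q$ --- by exploiting the full strength of cofinal-preservation; once such a $g$ is in hand, Dedekind completeness supplies the required supremum. This careful construction of $g$ is the technical heart of the argument.
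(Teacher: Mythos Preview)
The paper does not give its own proof of this lemma; it simply refers the reader to \cite{PG_AM}. Your approach is the standard one and is essentially correct.

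Part (i) is complete and correct as written.

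For part (ii), your outline is right: obtain a map $g : Q' \to P'$ with the Tukey property (preimages of bounded sets are bounded), set $S_p = \{q' \in Q' : g(q') \le p\}$, and define $\psi(p) = \sup S_p$; order-preservation and cofinality of $\psi(P')$ then follow exactly as you describe. You correctly identify that the naive selection of $g$ (any $g(q')$ with $\phi(g(q')) \ge q'$) need not have the Tukey property, and that securing this property is the heart of the matter. Note that the paper explicitly records, just after Lemma~\ref{Tukey_pres_cal}, that $(P',P)\tq(Q',Q)$ is equivalent to the existence of such a Tukey map $g$ (again citing \cite{PG_AM}), so you may simply invoke that. If you want the construction: for each $q' \in Q'$, the set $\{p \in P : \phi(p) \not\ge q'\}$ has $\phi$-image not cofinal for $(Q',Q)$, hence is itself not cofinal for $(P',P)$; choose $g(q') \in P'$ not dominated by any element of this set, so that $p \ge g(q')$ forces $\phi(p) \ge q'$. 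Then any bound $p$ for $g(E)$ yields $\phi(p)$ as a bound for $E$.

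One minor point you should address: if $S_p = \emptyset$ and $Q$ has no minimum, then $\sup S_p$ is undefined. A simple fix is to pick some $q_0 \in Q$ and set $\psi(p) = \sup(S_p \cup \{q_0\})$; order-preservation and the cofinality argument are unaffected.
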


For any cardinals $\kappa\ge\lambda\ge\mu\ge\nu$, we say a directed set pair $(P',P)$ has \emph{calibre $(\kappa,\lambda,\mu,\nu)$} (or $P'$ has \emph{relative calibre $(\kappa,\lambda,\mu,\nu)$ in $P$}) if each $\kappa$-sized subset of $P'$ contains a $\lambda$-sized subset $S$ such that each $\mu$-sized subset of $S$ contains a $\nu$-sized subset which is bounded in $P$.  If $\mu=\nu$, then we write simply `calibre $(\kappa,\lambda,\mu)$' Similarly, if $\lambda=\mu=\nu$, then we write `calibre $(\kappa,\lambda)$', and we write `calibre $\kappa$' if $\kappa=\lambda=\mu=\nu$.  As before, we abbreviate the directed set pair $(P,P)$ to just $P$, and thus we may speak of the calibres of a directed set $P$. 
It is known (see \cite{KCal}, for example) that $\omega^\omega$, with its usual product order, has calibre $\omega_1$ if and only if $\omega_1 < \mathfrak{b}$, where $\mathfrak{b}$ is the minimal size of an unbounded set in $\omega^\omega$ in the mod-finite order.

The relationship between Tukey quotients and calibres is expressed in the next lemma, which says that Tukey quotients preserve calibres.

\begin{lemma} \label{Tukey_pres_cal}
Suppose $\kappa$ is a regular cardinal and $(P',P)$ has calibre $(\kappa,\lambda,\mu,\nu)$.  If $(P',P)\tq (Q',Q)$, then $(Q',Q)$ also has calibre $(\kappa,\lambda,\mu,\nu)$.
\end{lemma}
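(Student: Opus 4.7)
The plan is to establish calibre $(\kappa,\lambda,\mu,\nu)$ of $(Q',Q)$ by constructing a dual map $\psi : Q' \to P'$ from the Tukey quotient map $\phi : P \to Q$, and then pulling the calibre of $(P',P)$ back through $\psi$. To build $\psi$, I fix $q \in Q'$ and consider $S_q = \{p \in P : q \not\le \phi(p)\}$. This cannot be cofinal for $(P',P)$: if it were, then $\phi(S_q)$ would be cofinal for $(Q',Q)$, and so some $p \in S_q$ would satisfy $q \le \phi(p)$, contradicting $p \in S_q$. Hence there is some $\psi(q) \in P'$ such that no element of $S_q$ lies above it, which means the key property $\psi(q) \le p \Rightarrow q \le \phi(p)$ holds for every $p \in P$.

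With $\psi$ in hand, given $B \subseteq Q'$ of size $\kappa$, I will split on the size of $\psi(B)$, using the regularity of $\kappa$. If $|\psi(B)| = \kappa$, I thin $B$ to a size-$\kappa$ subset $B_0$ on which $\psi$ is injective, apply the calibre of $(P',P)$ to $\psi(B_0) \subseteq P'$ to obtain $S \subseteq \psi(B_0)$ of size $\lambda$ witnessing the calibre, and set $T = \psi^{-1}(S) \cap B_0$, which has size $\lambda$ by injectivity. For any $\mu$-sized $F \subseteq T$, injectivity makes $\psi(F) \subseteq S$ of size $\mu$, so it contains a $\nu$-sized $G$ bounded in $P$ by some $p_0$; the key property then transports $p_0$ to $\phi(p_0)$, which bounds the corresponding $\nu$-sized subset of $F$ in $Q$. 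If instead $|\psi(B)| < \kappa$, then by regularity some fiber $\psi^{-1}(p') \cap B$ has size $\kappa$; applying the key property with $p = p'$ shows this fiber is already bounded in $Q$ by $\phi(p')$, so any size-$\lambda$ subset trivially witnesses the calibre.

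The crucial subtlety — and the main obstacle — is engineering the key property of $\psi$ so it works for \emph{all} $p \in P$, not just for $p \in P'$; this is what allows a $P$-bound on $\psi(F)$ to lift to a $Q$-bound on $F$. Fortunately, the definition of ``cofinal for $(P',P)$'' allows the cofinal set to consist of arbitrary elements of $P$ (required only to dominate $P'$), so the non-cofinality witness $\psi(q) \in P'$ automatically controls \emph{every} $p \in P$ above it, not merely those in $P'$. Without this observation, one would be stuck needing the bound $p_0$ to lie in $P'$ in the Case 1 argument, which is not guaranteed by the calibre hypothesis.
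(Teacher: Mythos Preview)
Your proof is correct and follows essentially the approach the paper indicates: the paper does not give a detailed argument but states that the proof uses the dual characterization of $(P',P)\tq(Q',Q)$ via a Tukey map $\psi:Q'\to P'$ (sending $Q$-unbounded sets to $P$-unbounded sets), citing \cite{PG_AM}. Your construction of $\psi$ from $\phi$ via the non-cofinality of $S_q$ is precisely the standard way to produce this Tukey map, and your case split on $|\psi(B)|$ using regularity of $\kappa$ is the expected argument; your ``key subtlety'' about $\psi(q)\le p\Rightarrow q\le\phi(p)$ holding for all $p\in P$ is exactly what makes $\psi$ a Tukey map in the required relative sense.
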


The proof of the preceding lemma uses an equivalent condition for $(P',P)\tq (Q',Q)$, namely the existence of a so-called \emph{Tukey map} $\psi : Q'\to P'$ such that $\psi(E)$ is unbounded in $P$ for any subset $E$ of $Q'$ that is unbounded in $Q$ (see \cite{PG_AM}).  
In fact, the proof of the preceding lemma only requires that $\psi$ maps unbounded sets of size $\mu$ to unbounded sets.  We omit the proof.

\begin{lemma} \label{muTukey_pres_cal}
Let $(P',P)$ and $(Q',Q)$ be directed set pairs such that $(P',P)$ has calibre $(\kappa,\lambda,\mu,\nu)$, where $\kappa$ is a regular cardinal.  If there is a map $\psi : Q' \to P'$ such that for any $\mu$-sized subset $E$ of $Q'$ which is unbounded in $Q$, the image $\psi(E)$ is unbounded in $P$, then $(Q',Q)$ also has calibre $(\kappa,\lambda,\mu,\nu)$. 
\end{lemma}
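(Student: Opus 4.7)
The plan is to follow the template of Lemma~\ref{Tukey_pres_cal}, carefully tracking where the size restriction on the hypothesis actually matters. Given an arbitrary $\kappa$-sized subset $A$ of $Q'$, the goal is to produce a $\lambda$-sized $S \subseteq A$ such that every $\mu$-sized $E \subseteq S$ contains a $\nu$-sized subset bounded in $Q$.

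First I would split into cases based on the fibers of $\psi$ over $A$. By the regularity of $\kappa$ (and the fact that $\lambda \le \kappa$), either some $p \in P'$ has $|\psi^{-1}(p) \cap A| \ge \lambda$, or $\psi$ has an injective $\kappa$-sized restriction on some $A' \subseteq A$. In the first case, take $S$ to be any $\lambda$-sized subset of the large fiber $\psi^{-1}(p) \cap A$. Any $\mu$-sized $E \subseteq S$ then has $\psi(E) = \{p\}$ bounded in $P$, so the contrapositive of the hypothesis---applicable precisely because $|E| = \mu$---forces $E$ itself to be bounded in $Q$, and any $\nu$-sized subset of $E$ is automatically bounded.

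In the second case, I apply calibre $(\kappa,\lambda,\mu,\nu)$ of $(P',P)$ to the $\kappa$-sized set $\psi(A') \subseteq P'$ to extract a $\lambda$-sized $T \subseteq \psi(A')$ whose $\mu$-sized subsets each contain a $\nu$-sized subset bounded in $P$, and set $S := A' \cap \psi^{-1}(T)$, which has cardinality $\lambda$. For a $\mu$-sized $E \subseteq S$, injectivity of $\psi$ on $A'$ guarantees $\psi(E)$ is a $\mu$-sized subset of $T$; by the defining property of $T$, it contains a $\nu$-sized subset $F$ bounded in $P$, and the pullback $E' := E \cap \psi^{-1}(F)$ then has size $\nu$ and is the natural candidate bounded subset.

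The main obstacle is confirming $E'$ is bounded in $Q$. In the principal case $\mu = \nu$---which covers every application in this paper, including calibre $\omoneom$---$E'$ itself has size $\mu$ and $\psi(E') = F$ is bounded in $P$, so a single invocation of the hypothesis delivers boundedness of $E'$ in $Q$. For the genuinely more general case $\nu < \mu$, the cardinality of $E'$ drops below the threshold at which the hypothesis bites, so one must enlarge $E'$ within $E$ to a $\mu$-sized set whose $\psi$-image is still bounded in $P$, for instance by using an upper bound of $F$ in $P$ and a suitable refinement of the choice of $T$, before applying the hypothesis and inferring boundedness of the enlarged set---and hence of $E'$---in $Q$.
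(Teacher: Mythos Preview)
The paper omits the proof of this lemma, remarking only that the argument for Lemma~\ref{Tukey_pres_cal} goes through under the weakened hypothesis. Your argument is the natural adaptation, and for the case $\mu = \nu$ it is complete and correct: in both the large-fiber case and the injective case, the set on which you invoke the hypothesis has size exactly $\mu$, so the contrapositive applies cleanly. Since the only use of this lemma in the paper is with calibre $(\omega_1,\omega)$, where $\mu = \nu = \omega$, that is all that is actually needed.

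Your treatment of $\nu < \mu$, however, is not merely hand-waved but unfixable: the lemma as stated is \emph{false} in that generality. Take $P = P' = \omega_1$ with the ordinal order, $Q = Q' = [\omega_1]^{<\omega}$, and $\psi(s) = \max s$ (with $\psi(\emptyset) = 0$), and consider calibre $(\omega_1,\omega_1,\omega_1,\omega)$, which coincides with calibre $(\omega_1,\omega)$. The pair $(\omega_1,\omega_1)$ has this calibre, since any uncountable subset of $\omega_1$ has its first $\omega$ elements bounded. The map $\psi$ satisfies the hypothesis with $\mu = \omega_1$: if $E \subseteq [\omega_1]^{<\omega}$ is uncountable and $\psi(E)$ were bounded by some $\alpha$, then every member of $E$ would lie in the countable set $[0,\alpha]^{<\omega}$. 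Yet $[\omega_1]^{<\omega}$ fails calibre $(\omega_1,\omega)$, witnessed by the singletons $\{\alpha\}$ for $\alpha<\omega_1$. So your proposed enlargement of $E'$ to a $\mu$-sized set with bounded $\psi$-image cannot succeed in general; the honest repair is to restrict the statement to $\mu = \nu$ (as in every application here), or to strengthen the hypothesis on $\psi$ to cover unbounded sets of size $\nu$ as well.
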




\subsection{Topology and Topological Directed Sets}

A space is \emph{second countable}, abbreviated $2^o$, if it has a countable base; it is \emph{first countable} if each point has a countable local base; and it has \emph{countable extent} if every closed discrete subset is countable. Other topological terminology and notation are standard, see \cite{Eng}, and are introduced in place. 

A \emph{topological directed set} is a directed set with a topology.  For example, if $X$ is any space, then the set $\K(X)$ of all compact subsets of $X$ is directed by inclusion and is naturally equipped with the Vietoris topology.  A topological directed set $P$ is said to be \emph{CSB}  if every convergent sequence in $P$ is bounded, \emph{CSBS}  if every convergent sequence in $P$ contains a bounded subsequence, \emph{KSB}  if every compact subset of $P$ is bounded, and \emph{DK}   if every down set, $\down{p} = \{p' \in P : p' \le p\}$, of $P$ is compact. Clearly KSB implies CSB and CSB implies CSBS.

Fundamental properties of the Vietoris topology on $\K(X)$, for Hausdorff $X$, include: (i) if $X$ is compact then $\K(X)$ is compact and (ii) if $\mathcal{K}$ is a compact subset of $\K(X)$ then $\bigcup \K$ is a compact subset of $X$. The next lemma follows.
\begin{lemma} \label{KX_basics}
Let $X$ be a Hausdorff space. Then $\K(X)$ is KSB, DK, and Dedekind complete.
\end{lemma}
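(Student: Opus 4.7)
The plan is to verify each of the three properties directly from the two cited facts about the Vietoris topology, exploiting throughout that the underlying order on $\K(X)$ is inclusion and that compact subsets of a Hausdorff space are closed.

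For KSB, let $\mathcal{K}$ be a compact subset of $\K(X)$. By fact (ii), $\bigcup \mathcal{K}$ is a compact subset of $X$, hence an element of $\K(X)$. Since $K \subseteq \bigcup \mathcal{K}$ for every $K \in \mathcal{K}$, this element bounds $\mathcal{K}$ in the inclusion order.

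For DK, fix $K \in \K(X)$. The down set $\down{K}$ is precisely $\K(K)$, the set of compact subsets of $K$. Since $K$ is itself a compact Hausdorff space, fact (i) gives that $\K(K)$ is compact in its own Vietoris topology. Because $K$ is closed in $X$ (compact subsets of Hausdorff spaces are closed), the Vietoris topology on $\K(K)$ agrees with the subspace topology it inherits from $\K(X)$, so $\down{K}$ is compact as a subspace of $\K(X)$.

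For Dedekind completeness, let $\mathcal{A} \subseteq \K(X)$ be bounded, say $A \subseteq K$ for all $A \in \mathcal{A}$ and some $K \in \K(X)$. Then $\bigcup \mathcal{A} \subseteq K$, and since $K$ is closed, $L := \overline{\bigcup \mathcal{A}}$ is a closed subset of the compact set $K$, hence compact. Clearly $L$ is an upper bound for $\mathcal{A}$, and any compact (hence closed) upper bound must contain $\bigcup \mathcal{A}$ and therefore also $L$, so $L$ is the least upper bound.

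None of these steps poses a real obstacle; the main point is simply to apply (i) and (ii) at the right moment and to remember to take the closure when forming suprema, since arbitrary unions of compacta need not be compact.
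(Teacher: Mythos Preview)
Your proof is correct and is exactly the intended argument: the paper merely states that the lemma follows from facts (i) and (ii), and you have filled in precisely those routine verifications. Nothing to add.
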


\begin{lemma} \label{prod_KM}
For any spaces $X_\alpha$, we have $\prod_\alpha \K(X_\alpha) =_T \K(\prod_\alpha X_\alpha)$.
\end{lemma}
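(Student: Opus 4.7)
The plan is to establish both Tukey quotients separately, in each case producing an order-preserving map whose image is cofinal and invoking Lemma~\ref{tq_dc}(i).

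For $\prod_\alpha \K(X_\alpha) \tq \K(\prod_\alpha X_\alpha)$, I would define
\[
\phi : \prod_\alpha \K(X_\alpha) \to \K\bigl(\textstyle\prod_\alpha X_\alpha\bigr), \qquad \phi\bigl((K_\alpha)_\alpha\bigr) = \prod_\alpha K_\alpha.
\]
Tychonoff's theorem ensures $\phi$ takes values in $\K(\prod_\alpha X_\alpha)$, and $\phi$ is monotone since the box operation respects coordinatewise inclusion. For cofinality of the image, any $K \in \K(\prod_\alpha X_\alpha)$ has compact projections $\pi_\alpha(K)$ (continuous images of a compact set), and $K \subseteq \prod_\alpha \pi_\alpha(K) = \phi\bigl((\pi_\alpha(K))_\alpha\bigr)$.

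For the reverse quotient $\K(\prod_\alpha X_\alpha) \tq \prod_\alpha \K(X_\alpha)$, I would define $\psi(K) = (\pi_\alpha(K))_\alpha$, which is clearly order-preserving. For cofinality of the image, given $(K_\alpha)_\alpha \in \prod_\alpha \K(X_\alpha)$, set $L_\alpha = K_\alpha$ whenever $K_\alpha$ is nonempty and otherwise take $L_\alpha = \{x_\alpha\}$ for any chosen point $x_\alpha \in X_\alpha$ (empty factors contribute nothing to either side, so we may assume each $X_\alpha \neq \emptyset$). Then $L = \prod_\alpha L_\alpha \in \K(\prod_\alpha X_\alpha)$ and $\psi(L) = (L_\alpha)_\alpha$ dominates $(K_\alpha)_\alpha$ coordinatewise.

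There is no real obstacle here: the argument reduces to Tychonoff's theorem, continuity of projections, and the elementary fact that every compact set sits inside the product of its projections. The only minor bookkeeping concerns empty coordinates, which is easily sidestepped as indicated. (As an alternative, since $\prod_\alpha \K(X_\alpha)$ is Dedekind complete by Lemma~\ref{KX_basics} and the obvious product fact, one could derive the reverse quotient from Lemma~\ref{tq_dc}(ii) applied to $\phi$ instead.)
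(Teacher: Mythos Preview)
Your proof is correct and follows exactly the paper's approach: the paper simply names the two maps $(K_\alpha)_\alpha \mapsto \prod_\alpha K_\alpha$ and $K \mapsto (\pi_\alpha[K])_\alpha$ without further comment, and you have supplied the routine verifications (order-preservation, cofinality, the empty-coordinate bookkeeping) that the paper leaves to the reader.
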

\begin{proof}
The relevant Tukey quotients are $(K_\alpha)_\alpha \mapsto \prod_\alpha K_\alpha$ and $K \mapsto (\pi_\alpha[K])_\alpha$.
\end{proof}

\begin{lemma} \label{om1om_to_ce}
Let $Q$ be a topological directed set. If $Q$ is Hausdorff, DK and has calibre $(\omega_1,\omega)$, then $Q$ has countable extent.
\end{lemma}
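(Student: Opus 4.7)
The plan is to argue by contradiction, using the DK hypothesis to produce a compact Hausdorff set that swallows the witnessing uncountable discrete family, and then exploiting compactness to get a contradiction.

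First, I would suppose $D \subseteq Q$ is an uncountable closed discrete subset of $Q$, aiming to derive a contradiction. Since $|D| \ge \omega_1$ and $Q$ has calibre $(\omega_1,\omega)$, there is an infinite subset $E \subseteq D$ that is bounded in $Q$; fix an upper bound $q \in Q$, so that $E \subseteq \down{q}$.

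The key structural observation comes next: because $Q$ is DK, the down set $\down{q}$ is a compact subspace of $Q$, and it is Hausdorff since $Q$ is. Moreover, since $D$ is closed and discrete in $Q$, every subset of $D$ is closed in $D$ (as $D$ carries the discrete topology) and hence closed in $Q$. In particular $E$ is closed in $Q$, so $E = E \cap \down{q}$ is closed in $\down{q}$, and it inherits the discrete subspace topology from $D$.

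Thus $E$ is an infinite closed discrete subspace of the compact Hausdorff space $\down{q}$. This is impossible: any infinite subset of a compact Hausdorff space has an accumulation point, so it cannot be closed and discrete. This contradiction forces every closed discrete subset of $Q$ to be countable, i.e.\ $Q$ has countable extent. I do not foresee a serious obstacle here; the only subtlety worth checking carefully is the passage from ``$D$ closed discrete'' to ``every subset of $D$ is closed in $Q$,'' which is what makes the infinite bounded set $E$ automatically closed in the compact set $\down{q}$.
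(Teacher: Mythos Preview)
Your proof is correct and follows essentially the same approach as the paper's: assume an uncountable closed discrete set, use calibre $(\omega_1,\omega)$ to extract an infinite bounded subset, and observe that this yields an infinite closed discrete subset of the compact Hausdorff set $\down{q}$, a contradiction. The only cosmetic difference is that the paper intersects $\down{q}$ with the original closed discrete set rather than passing to $E$ directly, avoiding the (correct but extra) remark that every subset of a closed discrete set is closed in the ambient space.
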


\begin{proof}
Suppose $S$ is an uncountable closed discrete subset of $Q$.  Then $S$ contains an infinite subset $S_1$ with an upper bound $q$ in $Q$.  Then $\down{q}\cap S$ is compact, infinite, and closed discrete, which is a contradiction.
\end{proof}

\begin{lemma} \label{ce_to_om1om}
Let $Q$ be a topological directed set. Then $Q$ has calibre $(\omega_1,\omega)$ if:

(1) $Q$ is first countable, has countable extent and is CSBS, or

(2) $Q$ is locally compact, has countable extent and is KSB.
\end{lemma}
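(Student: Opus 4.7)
The plan is to attack both cases by first extracting, from any uncountable $A\subseteq Q$, a cluster point $x\in Q$ of $A$ (a point meeting $A\setminus\{x\}$ in every neighborhood), and then to use the structural hypothesis---first countability plus CSBS in (1), local compactness plus KSB in (2)---to pull an infinite bounded subset of $A$ out of a suitable neighborhood of $x$. The common first step uses countable extent: were no such $x$ to exist, then each $y\in Q$ would admit a neighborhood $U_y$ with $U_y\cap A\subseteq\{y\}$, which forces $A$ to be closed (no $y\notin A$ lies in $\overline{A}$) and discrete (each $y\in A$ is isolated in $A$); countable extent would then make $A$ countable, a contradiction. In the $T_1$ setting implicit here---the motivating examples $\K(X)$ are Hausdorff---such an $x$ moreover has infinitely many points of $A$ in every neighborhood.

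For case (1), first countability at $x$ provides a decreasing local base $(U_n)_n$, and choosing $a_n\in (U_n\cap A)\setminus\{x,a_0,\dots,a_{n-1}\}$ at each stage yields a sequence of distinct points of $A$ converging to $x$. The CSBS hypothesis then gives an infinite bounded subsequence of $(a_n)$, whose range is the required infinite bounded subset of $A$.

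For case (2), local compactness supplies a compact neighborhood $K$ of $x$; by the infinite-cluster observation above, $K\cap A$ is infinite. Its closure in $Q$ is contained in the compact set $K$ and hence is compact, so KSB produces an upper bound in $Q$ for this closure and \emph{a fortiori} for $K\cap A$ itself. Thus $K\cap A$ is an infinite bounded subset of $A$, completing this case. The only subtle point in the whole argument is the tacit appeal to $T_1$ when upgrading from ``cluster point'' to ``infinitely many nearby points of $A$,'' which I would state explicitly at the outset; otherwise the hypotheses of first countability (resp.\ local compactness) combine cleanly with countable extent and CSBS (resp.\ KSB) to deliver the infinite bounded subset.
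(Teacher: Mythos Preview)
Your proof is correct and follows essentially the same route as the paper's: use countable extent to find a point $q$ in the closure of $S\setminus\{q\}$, then in case~(1) extract a convergent sequence via first countability and apply CSBS, while in case~(2) take a compact neighborhood and apply KSB. If anything, you are more careful than the paper in two places: you explicitly arrange the sequence terms to be distinct (so the bounded subsequence really has infinite range), and you flag the tacit $T_1$ assumption needed to know every neighborhood of the cluster point meets $A$ infinitely often---a point the paper's proof also relies on but does not state.
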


\begin{proof}
Let $S$ be an uncountable subset of $Q$.  Since $S$ is not closed and discrete, then there is a $q$ in $Q$ such that $q$ is in the closure of $S\setminus\{q\}$.  For (1), since $Q$ is first countable, then there is a sequence $(q_n)_n$ in $S\setminus\{q\}$ that converges to $q$.  Now, CSBS implies that $Q$ contains an upper bound $u$ for some subsequence $(q_{n_k})_k$, so $S$ contains an infinite subset $S_0=\{q_{n_k} : k\in\N\}$ with an upper bound in $Q$. For (2), since $Q$ is locally compact there is a compact neighborhood $C$ of $q$. Then $C$ contains an infinite subset $S_0$ of $S$, and $\cl{S_0}$ is therefore compact, so by KSB, $S_0$ has an upper bound.  
\end{proof}


\begin{corollary} \label{kx_ce}
Let $X$ be locally compact and Hausdorff. Then $\K(X)$ has calibre $(\omega_1,\omega)$ if and only if $\K(X)$ has countable extent.
\end{corollary}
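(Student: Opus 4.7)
The plan is to deduce this corollary as a direct combination of the preceding two lemmas (\ref{om1om_to_ce} and \ref{ce_to_om1om}), together with the standard properties of $\K(X)$ under the Vietoris topology summarized in Lemma~\ref{KX_basics}.

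For the forward direction, I want to apply Lemma~\ref{om1om_to_ce}. To do this I need to know that $\K(X)$ is Hausdorff and DK. Since $X$ is Hausdorff, the Vietoris topology on $\K(X)$ is Hausdorff; and Lemma~\ref{KX_basics} already gives that $\K(X)$ is DK. Then assuming $\K(X)$ has calibre $(\omega_1,\omega)$, Lemma~\ref{om1om_to_ce} yields countable extent immediately.

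For the reverse direction, I want to apply part (2) of Lemma~\ref{ce_to_om1om}. The hypotheses needed are local compactness, countable extent, and KSB. Countable extent is the assumption; KSB is part of Lemma~\ref{KX_basics}. The remaining ingredient is that $\K(X)$, equipped with the Vietoris topology, is locally compact whenever $X$ is locally compact Hausdorff — a standard fact (for each $K \in \K(X)$, take a relatively compact open neighborhood $U$ of $K$ in $X$; then $\K(\overline{U})$ is a compact neighborhood of $K$ in $\K(X)$, using that $\K(Y)$ is compact when $Y$ is, as noted in the paragraph preceding Lemma~\ref{KX_basics}). With those three properties in hand, Lemma~\ref{ce_to_om1om}(2) delivers calibre $(\omega_1,\omega)$.

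There is no real obstacle here; the work is all done by the earlier lemmas. The only item that is not quoted verbatim from the excerpt is the local compactness of $\K(X)$ for $X$ locally compact Hausdorff, and this is a routine consequence of facts (i) and (ii) about the Vietoris topology that are recalled just before Lemma~\ref{KX_basics}, so I would simply cite it in one sentence rather than prove it in detail.
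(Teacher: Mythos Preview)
Your proposal is correct and is exactly the intended argument: the paper states this as an immediate corollary of Lemmas~\ref{om1om_to_ce} and~\ref{ce_to_om1om}, using the DK and KSB properties from Lemma~\ref{KX_basics} together with the local compactness of $\K(X)$ for locally compact Hausdorff $X$. There is nothing to add.
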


Basic open sets in $\K(X)$  have the form $\langle U_1,\ldots, U_n \rangle = \{K\in\K(X) : K \subseteq \bigcup_{i=1}^n U_i,\ K\cap U_i \neq \emptyset \ \forall i=1,\ldots,n\}$ for some $n$ in $\N$ and $U_i$ open in $X$. The open sets $U_1, \ldots, U_n$ can be assumed to come from any specified base for $X$.

\begin{lemma} \label{kx_1ctble}
If $X$ is first countable and totally imperfect, then $\K(X)$ is also first countable.
\end{lemma}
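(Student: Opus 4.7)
The plan is to show that every $K \in \K(X)$ must be countable, and then to construct a countable local base at $K$ in $\K(X)$ from countable local bases at the (countably many) points of $K$ using the structure of Vietoris basic opens.

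First I would argue that every compact $K \subseteq X$ is countable. Since $X$ is Hausdorff and $K$ is compact, $K$ is closed in $X$; thus any nonempty closed-in-$K$ dense-in-itself subset of $K$ would also be closed in $X$ and hence be a nonempty perfect subset of $X$, contradicting that $X$ is totally imperfect. So $K$ is scattered, and hence $K$ is a compact, Hausdorff, first countable, scattered space. A standard Cantor-Bendixson argument then yields $|K|\le\omega$: compactness forces the CB height to be a successor ordinal, first countability (applied at an isolated point of $K^{(\omega_1)}$ inside a neighborhood witnessing isolation) is used to bound the CB height strictly below $\omega_1$, and a transfinite induction on the (now countable) CB height, inside clopen neighborhoods shrinking to a top-level point and using first countability to write the complement as a countable union of closed subspaces of strictly smaller CB height, shows each level is countable.

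Once $K$ is known to be countable, enumerate $K = \{x_n : n \in \N\}$ and, for each $n$, fix a decreasing countable local base $\{B^n_m : m \in \N\}$ at $x_n$ in $X$. Consider the countable family
\[ \mathcal{F} = \{\langle B^{n_1}_{m_1}, \ldots, B^{n_k}_{m_k}\rangle : k, n_1, \ldots, n_k, m_1, \ldots, m_k \in \N,\ K \subseteq \textstyle\bigcup_{i=1}^k B^{n_i}_{m_i}\}. \]
To see $\mathcal{F}$ is a local base at $K$, take any Vietoris basic neighborhood $\langle V_1, \ldots, V_l\rangle$ of $K$. For each $x_n \in K$ choose $j(n)$ with $x_n \in V_{j(n)}$ and $m(n)$ with $B^n_{m(n)} \subseteq V_{j(n)}$; by compactness some finite subfamily $\{B^{n_i}_{m(n_i)}\}_{i=1}^r$ covers $K$. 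For each $V_j$ also pick $x_{q(j)} \in V_j \cap K$ and $m'(j)$ with $B^{q(j)}_{m'(j)} \subseteq V_j$. On the common finite index set $F = \{n_1, \ldots, n_r\} \cup \{q(1), \ldots, q(l)\}$, take $m^*(n)$ large enough so that $B^n_{m^*(n)}$ satisfies every relevant containment. The resulting basic open $\langle B^n_{m^*(n)} : n \in F\rangle$ lies in $\mathcal{F}$ and is contained in $\langle V_1, \ldots, V_l\rangle$.

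The main obstacle is the first step, establishing that compact, Hausdorff, first countable, scattered forces countability: the transfinite Cantor-Bendixson bookkeeping is what really does the work, and must carefully exploit first countability both to rule out an uncountable CB height and to keep each individual CB level countable. Once the cardinality of $K$ is controlled, the Vietoris-basis construction is a routine, bookkeeping-heavy application of compactness and first countability at each $x_n$.
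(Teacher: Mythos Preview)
Your proposal is correct and follows the same two-step approach as the paper: first argue that each $K\in\K(X)$ is countable, then build a countable Vietoris local base at $K$ from countable local bases at its points. The paper asserts the countability of $K$ in one line (``since $X$ is totally imperfect''), while you supply the Cantor--Bendixson justification explicitly; your local-base construction is essentially identical to the paper's.
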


\begin{proof}
Let $K \in \K(X)$.  Then we can enumerate $K = \{x_i : i\in\N\}$ since $X$ is totally imperfect.  For each $i\in\N$, fix a countable neighborhood base $(B_{i,n})_n$ for $x_i$.  For any finite sequence of positive integers $\vn = (n_1,\ldots,n_k)$, define $T_\vn = \langle B_{1,n_1},\ldots, B_{k,n_k} \rangle$.  Now let $U = \langle U_1,\ldots,U_\ell \rangle$ be any basic neighborhood of $K$, so $K$ is contained in $\bigcup_{j=1}^\ell U_j$ and intersects each $U_j$.

For each $i\in\N$, pick $n_i\in\N$ such that $x_i \in B_{i,n_i} \subseteq \bigcap \{U_j : x_i \in U_j\}$.  Since $K$ is compact, there is a $k\in\N$ such that $\{B_{i,n_i} : i=1,\ldots,k\}$ covers $K$.  Since $K$ intersects each $U_j$, then by choosing $k$ large enough, we can also ensure that for each $j\in\{1,\ldots,\ell\}$, there is some $i\in\{1,\ldots,k\}$ such that $B_{i,n_i} \subseteq U_j$.  For $\vn = (n_1,\ldots,n_k)$, it follows that $K \in T_\vn \subseteq U$.  Hence, $K$ has a countable neighborhood base: $\{T_\vn : \vn \in \bigcup_m \N^m,\ K\in T_\vn\}$.
\end{proof}

\subsection{Types of Directed Sets}

Let $Q$ be a directed set and $R$ a property of subsets of a directed set.  We say a family $\{S_q : q\in Q\}$ of subsets of a set $S$ is \emph{$Q$-ordered} if $S_q \subseteq S_{q'}$ whenever $q\le q'$ in $Q$.  We say a directed set pair $(P',P)$ is of \emph{type} \type{$Q$}{$R$} if $P'$ is covered by a $Q$-ordered family $\{P_q : q \in Q\}$ of subsets of $P$ such that each $P_q$ has property $R$.  A directed set $P$ is of type \type{$Q$}{$R$} if $(P,P)$ is of type \type{$Q$}{$R$}.

If $\mathcal{Q}$ is a class of directed sets, then \type{$\mathcal{Q}$}{$R$} denotes the class of all directed sets with type \type{$Q$}{$R$} for some $Q$ in $\mathcal{Q}$.  Likewise, if $R'$ is a property of directed sets, then \type{$R'$}{$R$} denotes the class of all directed sets with type \type{$Q$}{$R$} for some directed set $Q$ with property $R'$.

Let $R$ be a property of subsets of a directed set, and let $P$ be a directed set.  Then $R(P)$ will denote the family of all subsets of $P$ with property $R$, ordered by inclusion.  If $R$ is preserved by finite unions, then $R(P)$ is a directed set.  Let $P'$ be a subset of $P$.  If each singleton subset of $P'$ has property $R$, then we can identify $P'$ with a subset of $R(P)$ (ignoring the partial order $P'$ inherits from $P$).  In this way, $(P',R(P))$ is a directed set pair.

In many cases throughout this paper, the family $R(P)$ is Dedekind complete.  For example, if $R$ is the property of having relative calibre $(\kappa,\lambda,\mu)$ in $P$, and if $\mathcal{A}$ is a bounded subset of $R(P)$, then $\bigcup \mathcal{A}$ is the least upper bound of $\mathcal{A}$ in $R(P)$.

\begin{lemma} \label{tq_types}\label{cd_type_tq}
Let $(P',P)$ be a directed set pair, let $Q$ and $Q'$ be directed sets, and let $R$ be a property of subsets of a directed set such that $(P',R(P))$ is a directed set pair, as described above.  Suppose also that $R(P)$ is Dedekind complete.  Then we have:

(i) $(P',P)$ has type \type{$Q$}{$R$} if and only if $Q \tq (P', R(P))$.

(ii) If $Q\tq Q'$, then type \type{$Q'$}{$R$} implies type \type{$Q$}{$R$}.

(iii) If $Q'$ is Dedekind complete, then (ii) holds even if $R(P)$ is not Dedekind complete.
\end{lemma}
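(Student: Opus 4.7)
\textbf{Plan for Lemma~\ref{tq_types}.} The statement is essentially a reformulation of ``being of type \type{$Q$}{$R$}'' as a Tukey quotient statement, so the plan is to unpack both sides into the explicit existence of a map and then translate between them using Lemma~\ref{tq_dc}.

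For (i), I will prove both directions by producing order-preserving maps between $Q$ and $R(P)$. For the forward direction, suppose $\{P_q : q\in Q\}$ is a $Q$-ordered cover of $P'$ with each $P_q$ having property $R$. Define $\phi:Q\to R(P)$ by $\phi(q)=P_q$. By the $Q$-ordering, $\phi$ is order-preserving. To apply Lemma~\ref{tq_dc}(i), I only need $\phi(Q)$ to be cofinal for $(P',R(P))$: given $p'\in P'$, the covering property provides some $q$ with $p'\in P_q$, which (under the identification of $p'$ with $\{p'\}$) is exactly $\{p'\}\subseteq \phi(q)$ in $R(P)$. Hence $Q\tq (P',R(P))$. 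For the reverse direction, since $R(P)$ is Dedekind complete we may invoke Lemma~\ref{tq_dc}(ii) to get an order-preserving $\phi:Q\to R(P)$ with $\phi(Q)$ cofinal for $(P',R(P))$, and then setting $P_q=\phi(q)$ gives a $Q$-ordered family witnessing type \type{$Q$}{$R$} (covering follows from cofinality as above, since any $p'\in P'$ satisfies $\{p'\}\subseteq \phi(q)$ for some $q$).

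For (ii), given Dedekind completeness of $R(P)$, I would just compose: if $(P',P)$ has type \type{$Q'$}{$R$}, then by (i) $Q'\tq (P',R(P))$; combined with the hypothesis $Q\tq Q'$ and transitivity of $\tq$, this yields $Q\tq (P',R(P))$, and a second appeal to (i) delivers type \type{$Q$}{$R$}.

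For (iii), the Dedekind completeness of $R(P)$ is unavailable, so I cannot use (i); instead I will transport the covering family directly along a map. Since $Q'$ is Dedekind complete and $Q\tq Q'$, Lemma~\ref{tq_dc}(ii) produces an order-preserving map $\psi:Q\to Q'$ with $\psi(Q)$ cofinal in $Q'$. Given a witnessing family $\{P_{q'} : q'\in Q'\}$ for type \type{$Q'$}{$R$}, define $\widetilde{P}_q = P_{\psi(q)}$. Monotonicity of $\psi$ makes $\{\widetilde{P}_q : q\in Q\}$ a $Q$-ordered family; each $\widetilde{P}_q$ still has property $R$; and cofinality of $\psi(Q)$ in $Q'$ ensures the new family covers $P'$ (given $p'\in P_{q'}$, pick $q$ with $q'\le\psi(q)$, so $p'\in P_{q'}\subseteq P_{\psi(q)}=\widetilde{P}_q$). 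Since every step is essentially a translation between ``covering $Q$-ordered family'' and ``order-preserving map into $R(P)$'', I do not anticipate a real obstacle; the only point requiring care is to invoke Dedekind completeness of the \emph{correct} directed set ($R(P)$ in (i)/(ii), $Q'$ in (iii)) so that Lemma~\ref{tq_dc}(ii) applies to turn a Tukey quotient into a genuine order-preserving map.
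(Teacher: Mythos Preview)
Your proposal is correct and follows essentially the same approach as the paper: translate ``type \type{$Q$}{$R$}'' into ``order-preserving map $Q\to R(P)$ with image covering $P'$'' (equivalently, cofinal for $(P',R(P))$), then invoke Lemma~\ref{tq_dc} and transitivity. Your treatment of (iii) via the transported family $\widetilde{P}_q=P_{\psi(q)}$ is just the paper's composition $\phi_2\circ\phi_1$ written out in terms of the covering families; the use of Dedekind completeness of $Q'$ (rather than $R(P)$) to extract the order-preserving $\psi:Q\to Q'$ is exactly what the paper does.
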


\begin{proof}
For (i), notice that $(P',P)$ has type \type{$Q$}{$R$} precisely when there is an order-preserving map $\phi : Q \to R(P)$ whose image covers $P'$, and since the points in $P'$ are identified with singletons in $R(P)$, then covering $P'$ is the same as being cofinal for $P'$ in $R(P)$.  Statement (i) now follows from Lemma~\ref{tq_dc}, and then statement (ii) follows from (i) and transitivity of the Tukey order.

To prove (iii), let $\phi_1 : Q \to Q'$ be an order-preserving map witnessing $Q\tq Q'$, which exists by Lemma~\ref{tq_dc}.  If $(P',P)$ has type \type{$Q'$}{$R$}, then there is an order-preserving map $\phi_2 : Q' \to R(P)$ whose image covers $P'$.  The map $\phi_1 \circ \phi_2$ then witnesses that $(P',P)$ also has type \type{$Q$}{$R$}.
\end{proof}




\begin{lemma} \label{trivial_types}
Let $R$ be a property shared by all directed sets that have a maximal element.  If there is an order-preserving map witnessing $Q\tq (P',P)$, then $(P',P)$ has type \type{$Q$}{$R$}. 
In particular, $P$ has type \type{$P$}{calibre $(\kappa,\lambda,\mu,\nu)$} for any cardinals $\kappa\ge\lambda\ge\mu\ge\nu$.
\end{lemma}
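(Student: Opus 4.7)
The plan is to construct an explicit $Q$-ordered family of subsets of $P$ witnessing the type. Given an order-preserving $\phi\colon Q \to P$ that realizes $Q \tq (P',P)$, I would set $P_q = \down{\phi(q)}$ for each $q \in Q$. Each $P_q$ is itself a directed set with maximum (hence maximal) element $\phi(q)$, so by the hypothesis on $R$ it has property $R$. The family is $Q$-ordered because $\phi$ is order-preserving: $q \le q'$ yields $\phi(q) \le \phi(q')$, and therefore $\down{\phi(q)} \subseteq \down{\phi(q')}$. Finally, $\{P_q : q \in Q\}$ covers $P'$: since $\phi$ witnesses $Q \tq (P',P)$, the image $\phi(Q)$ is cofinal for $(P',P)$, so for every $p' \in P'$ there is some $q \in Q$ with $p' \le \phi(q)$, giving $p' \in P_q$. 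This yields the first statement.

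For the ``in particular'' clause, I would apply the general statement with $Q = P$, $P' = P$, and $\phi$ the identity map on $P$, which is obviously order-preserving and cofinal. It only remains to observe that calibre $(\kappa,\lambda,\mu,\nu)$ is indeed a property shared by every directed set with a maximum: any subset of such a set is bounded by the maximum, so the calibre condition is satisfied vacuously, regardless of the cardinals $\kappa \ge \lambda \ge \mu \ge \nu$.

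I do not anticipate any real obstacle. The lemma is essentially a packaging result that translates the existence of an order-preserving Tukey witness into the language of types. The one small point worth pausing over is that in any directed set a maximal element is automatically a maximum (any two elements share an upper bound, forcing the maximal element to dominate everything), which is what ensures the down-sets $P_q = \down{\phi(q)}$ genuinely inherit the hypothesized property $R$ and, in the special case, give bounded subsets inside the ambient $P$.
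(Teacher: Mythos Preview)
Your proposal is correct and follows essentially the same approach as the paper: both take $P_q = \down{\phi(q)}$, observe that this family is $Q$-ordered, covers $P'$ by cofinality of $\phi(Q)$, and has property $R$ since each $\down{\phi(q)}$ has $\phi(q)$ as a maximal element. Your write-up is simply more detailed, including the explicit handling of the ``in particular'' clause via $\phi = \mathrm{id}_P$, which the paper leaves implicit.
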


\begin{proof}
Fix an order-preserving map $\phi : Q \to P$ whose image is cofinal for $(P',P)$.  The family $\{\down{\phi(q)} : q\in Q\}$ is $Q$-ordered and covers $P'$.  Also, each $\down{\phi(q)}$ has a maximal element and therefore has property $R$.
\end{proof}

\begin{lemma} \label{calibre_types}
Let $\kappa$ be a regular cardinal, and suppose $\kappa \ge \lambda \ge \mu \ge \nu$.  If $Q$ has calibre $(\kappa,\lambda)$ and $P$ has type \type{$Q$}{relative calibre $(\mu,\nu)$}, then $P$ has calibre $(\kappa,\lambda,\mu,\nu)$.
\end{lemma}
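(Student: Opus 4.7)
The plan is to use the covering family $\{P_q : q \in Q\}$ to reduce the calibre question on $P$ to the calibre condition on $Q$ combined with the relative calibre of a single piece $P_{q_0}$.

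Let $S \subseteq P$ be of size $\kappa$. Since the family $\{P_q : q\in Q\}$ covers $P$, I can choose for each $s \in S$ some $q_s \in Q$ with $s \in P_{q_s}$. This defines a map $s \mapsto q_s$ from $S$ into $Q$, and the regularity of $\kappa$ gives a dichotomy: either some fibre $\{s \in S : q_s = q\}$ has size $\kappa$, or else I can thin $S$ to a $\kappa$-sized subset $S_0$ on which $s \mapsto q_s$ is injective. (Otherwise the number of distinct values and the size of each fibre are both $<\kappa$, and by regularity their product is $<\kappa$, contradicting $|S|=\kappa$.)

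In the first case, pick any $\lambda$-sized $T \subseteq \{s \in S : q_s = q\}$ and set $q_0 = q$. In the second case, the set $R = \{q_s : s \in S_0\}$ has size $\kappa$ in $Q$; apply calibre $(\kappa,\lambda)$ of $Q$ to extract a $\lambda$-sized subset $R_0 \subseteq R$ bounded by some $q_0 \in Q$, and let $T$ be the preimage of $R_0$ under the (injective) map $s \mapsto q_s$ restricted to $S_0$. Either way, $T \subseteq S$ has size $\lambda$, and for each $s \in T$ we have $q_s \le q_0$, so by $Q$-orderedness $P_{q_s} \subseteq P_{q_0}$, giving $T \subseteq P_{q_0}$.

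Now invoke the hypothesis that $P_{q_0}$ has relative calibre $(\mu,\nu)$ in $P$: every $\mu$-sized subset of $T \subseteq P_{q_0}$ contains a $\nu$-sized subset bounded in $P$. This is exactly what is needed for $T$ to witness calibre $(\kappa,\lambda,\mu,\nu)$ of $P$ on the given $S$. The only subtle step is the dichotomy argument, which is a standard application of regularity; once that is in hand, the $Q$-orderedness of the cover does the rest of the work, showing precisely why calibre of $Q$ ``lifts'' through the type structure to a calibre of $P$.
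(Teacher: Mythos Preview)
Your proof is correct and follows essentially the same approach as the paper: both argue by a dichotomy on whether the assignment $s \mapsto q_s$ has a $\kappa$-sized fibre or a $\kappa$-sized image, using regularity of $\kappa$, and then in either case land inside a single $P_{q_0}$ of size $\ge\lambda$ to which the relative calibre hypothesis applies. The only cosmetic difference is that you first thin to an injective map before invoking calibre $(\kappa,\lambda)$ of $Q$, whereas the paper applies the calibre directly to the image; your parenthetical justification would be cleaner stated as ``union of fewer than $\kappa$ sets each of size $<\kappa$'' rather than ``product'', but the content is the same.
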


\begin{proof}
Let $\{P_q : q\in Q\}$ be a $Q$-ordered cover of $P$, where $Q$ has calibre $(\kappa,\lambda)$ and each $P_q$ has relative calibre $(\mu,\nu)$ in $P$.  Let $S$ be a subset of $P$ with size $\kappa$.  For each $p\in S$, pick $q_p\in Q$ such that $p \in P_{q_p}$.  If $\{q_p : p\in S\}$ has size $\kappa$, then we can find a $\lambda$-sized subset $S'$ of $S$ such that $\{q_p : p\in S'\}$ has an upper bound $q' \in Q$.  If instead $\{q_p : p\in S\}$ has size less than $\kappa$, then since $\kappa$ is regular, we can find a $\kappa$-sized subset $S''$ of $S$ and a $q''$ in $Q$ such that $q_p = q''$ for all $p\in S''$.

In either case, we obtain a $q\in Q$ such that $S_q = S\cap P_q$ has size at least $\lambda$.  Complete the proof  using the fact that $P_q$ has relative calibre $(\mu,\nu)$ in $P$.
\end{proof}

\section{Product Theorems}\label{sec:prod_thms}

\subsection{Uncountable Products}
Uncountable products have calibre $(\omega_1,\omega)$ only in trivial circumstances.

\begin{theorem}\label{unctble_prod}
Let $\kappa$ be an  uncountable cardinal and $\{P_\alpha : \alpha < \kappa\}$ be a family of directed sets.

Then $\prod \{P_\alpha : \alpha < \kappa\}$ has calibre $(\omega_1,\omega)$ if and only if all countable subproducts have calibre $(\omega_1,\omega)$ and all but countably many $P_\alpha$ are countably directed.
\end{theorem}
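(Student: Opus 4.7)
My plan is to prove both directions separately; the reverse direction is a clean splitting argument, while the forward direction reduces via Tukey maps to a combinatorial fact about $\omega^{\omega_1}$.

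\textbf{For the $(\Leftarrow)$ direction.} Let $J = \{\alpha : P_\alpha \text{ is not countably directed}\}$, which is at most countable by hypothesis, and split $\prod_\alpha P_\alpha \cong Q \times R$ with $Q = \prod_{\alpha \in J} P_\alpha$ and $R = \prod_{\alpha \notin J} P_\alpha$. Then $Q$ is a countable subproduct, so has calibre $\omoneom$ by hypothesis, and $R$ is countably directed, because a product of countably directed sets is countably directed (bound a countable family coordinate-wise). The proof concludes by verifying that the product of a calibre $\omoneom$ directed set with a countably directed one is calibre $\omoneom$: given an uncountable $\{(q_\xi, r_\xi)\} \subseteq Q \times R$, use calibre of $Q$ to extract an infinite subfamily on which $\{q_{\xi_k}\}$ is bounded by some $q^* \in Q$, and then use countable directedness of $R$ to bound the countable $\{r_{\xi_k}\}$ by some $r^* \in R$.

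\textbf{For the $(\Rightarrow)$ direction.} The countable-subproduct condition is immediate because the projection $\pi_J : \prod_\alpha P_\alpha \to \prod_{\alpha \in J} P_\alpha$ for each countable $J$ is an order-preserving map with cofinal image, witnessing $\prod_\alpha P_\alpha \tq \prod_{\alpha \in J} P_\alpha$, and Lemma~\ref{Tukey_pres_cal} transfers calibre $\omoneom$. For the second condition, argue contrapositively. Suppose $I' \subseteq \{\alpha : P_\alpha \text{ not countably directed}\}$ has $|I'| = \omega_1$. For each $\alpha \in I'$, pick an unbounded countable chain in $P_\alpha$; replacing by suprema of initial segments, assume $(c_\alpha^n)_n$ is strictly increasing and unbounded, so that every infinite subset of it is unbounded in $P_\alpha$. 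Then $\phi_\alpha : n \mapsto c_\alpha^n$ is a Tukey map $\omega \to P_\alpha$, and the product map $\Phi : \omega^{I'} \to \prod_{\alpha \in I'} P_\alpha$, $f \mapsto (c_\alpha^{f(\alpha)})_\alpha$, is also Tukey: an unbounded $S \subseteq \omega^{I'}$ has some coordinate $\alpha$ with $\{f(\alpha) : f \in S\}$ infinite, whose image in $P_\alpha$ is then unbounded. Composing with the projection $\prod_\alpha P_\alpha \to \prod_{\alpha \in I'} P_\alpha$ yields $\prod_\alpha P_\alpha \tq \omega^{\omega_1}$, so by Lemma~\ref{Tukey_pres_cal} it suffices to show $\omega^{\omega_1}$ is not calibre $\omoneom$.

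\textbf{The main obstacle} is constructing an uncountable family $\{f_\xi : \xi < \omega_1\} \subseteq \omega^{\omega_1}$ with no bounded infinite subset. A natural attempt is to fix, for each countable limit ordinal $\xi$, a bijection $e_\xi : \omega \to \xi$ and set $f_\xi(\beta) = e_\xi^{-1}(\beta) + 1$ for $\beta < \xi$ and $0$ otherwise; then an infinite subfamily $\{f_{\xi_k}\}$ is bounded in $\omega^{\omega_1}$ exactly when the rank sequences $\{e_{\xi_k}^{-1}(\beta) : k\}$ are bounded at every coordinate $\beta$. Pigeonhole forces uncountably many $\xi$ to share each rank at each $\beta$, so the enumerations $e_\xi$ must be selected via a careful diagonalization (for instance using a ladder system or a coherent family of countable sequences on $\omega_1$) to ensure that no infinite subfamily has uniformly bounded ranks at every coordinate. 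This combinatorial construction is the technical heart of the argument.
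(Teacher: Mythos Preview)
Your overall architecture matches the paper's exactly: the $(\Leftarrow)$ direction via the split $Q \times R$, and the $(\Rightarrow)$ direction via $\prod_\alpha P_\alpha \tq \omega^{\omega_1}$ followed by showing $\omega^{\omega_1}$ fails calibre $\omoneom$. Two minor points: in a general directed set there are no ``suprema of initial segments''; instead use directedness to build an increasing sequence dominating a given unbounded countable set. And in the $(\Leftarrow)$ argument you should explicitly handle the case that $\{q_\xi\}$ is only countable (then some fibre is uncountable and the argument is trivial).

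The real gap is in the lemma that $\omega^{\omega_1}$ is not calibre $\omoneom$. Your ``natural attempt'' with $f_\xi(\beta)=e_\xi^{-1}(\beta)+1$ supported on $[0,\xi)$ genuinely does not work for arbitrary bijections: if along some increasing chain $\xi_0<\xi_1<\cdots$ the injections $e_{\xi_k}$ happen to extend one another, the corresponding $f_{\xi_k}$ are pointwise eventually constant and hence bounded. You recognise this and defer to an unspecified ``careful diagonalization'' using ladder systems or coherent sequences, but you never carry it out, and it is not clear that this route succeeds without substantial extra work. The paper sidesteps the difficulty entirely by \emph{transposing} the construction: fix arbitrary injections $b_\beta:\beta\to\omega$ and set $f_\alpha(\beta)=b_\beta(\alpha)+1$ for $\beta>\alpha$ and $0$ otherwise, so that $f_\alpha$ is supported on the co-initial segment $(\alpha,\omega_1)$ rather than on $[0,\alpha)$. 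Now any infinite $S\subseteq\omega_1$ is visible at the single coordinate $\beta=\sup S+1$, where the values $\{f_\alpha(\beta):\alpha\in S\}=\{b_\beta(\alpha)+1:\alpha\in S\}$ are all distinct by injectivity of $b_\beta$, hence unbounded. No special choice of injections, no diagonalization, no coherence is required; the ``technical heart'' you anticipated dissolves once the support is placed on the correct side.
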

\begin{proof}
If $\prod \{P_\alpha : \alpha < \kappa\}$ has calibre $(\omega_1,\omega)$ then certainly (as projections are Tukey quotients) all countable subproducts have calibre $(\omega_1,\omega)$. And note that a $P_\alpha$ is not countably directed if and only if $P_\alpha \ge_T \omega$. So if uncountably many $P_\alpha$'s are not countably directed then $\prod \{P_\alpha : \alpha < \kappa\} \ge_T \omega^{\omega_1}$, and apply the next lemma.

For the converse let $C=\{ \alpha : P_\alpha$ is not countably directed$\}$. By hypothesis $C$ is countable and $\prod \{P_\alpha : \alpha \in C\}$ has calibre $(\omega_1,\omega)$. Since $P_\alpha$ is countably directed for each $\alpha \notin C$, it is clear that $\prod \{P_\alpha : \alpha \notin C\}$ is countably directed, or equivalently has calibre $\omega$. So $\prod \{P_\alpha : \alpha \in \kappa\} = \prod \{P_\alpha : \alpha \in C\} \times \prod \{P_\alpha : \alpha \notin C\}$, as the product of a calibre $(\omega_1,\omega)$ and a calibre $\omega$ directed set, has calibre $(\omega_1,\omega)$.
\end{proof}

\begin{lemma}
The directed set $\omega^{\omega_1}$ does not have calibre $(\omega_1,\omega)$.
\end{lemma}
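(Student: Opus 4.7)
The plan is to exhibit an uncountable subset $S \subseteq \omega^{\omega_1}$ in which every infinite subset is unbounded; this directly witnesses that $\omega^{\omega_1}$ fails calibre $(\omega_1,\omega)$.

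The construction will use enumerations of countable ordinals. For each $\gamma$ with $\omega \le \gamma < \omega_1$, fix a bijection $e_\gamma : \omega \to \gamma$. For each $\alpha < \omega_1$, define $f_\alpha \in \omega^{\omega_1}$ by
\[
f_\alpha(\gamma) = \begin{cases} e_\gamma^{-1}(\alpha) & \text{if } \omega \leq \gamma \text{ and } \alpha < \gamma, \\ 0 & \text{otherwise.} \end{cases}
\]
Set $S = \{f_\alpha : \alpha < \omega_1\}$. Since the $e_\gamma$ are bijections, the $f_\alpha$'s are pairwise distinct (they disagree at any $\gamma > \max(\alpha,\beta)$ with $\gamma \ge \omega$), so $|S| = \omega_1$.

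The main step will be to verify that every infinite subset of $S$ is unbounded. Given an infinite $A \subseteq \omega_1$, pick a countably infinite set $\{a_n : n \in \omega\} \subseteq A$ of distinct elements. Since every countable subset of $\omega_1$ is bounded, $\delta := \sup_n a_n < \omega_1$. Choose any $\gamma \in (\delta, \omega_1)$ with $\gamma \ge \omega$. Then $f_{a_n}(\gamma) = e_\gamma^{-1}(a_n)$ for each $n$, and $\{e_\gamma^{-1}(a_n) : n \in \omega\}$ is an infinite, hence unbounded, subset of $\omega$. Therefore $\sup_n f_{a_n}(\gamma) = \omega$, so no $g \in \omega^{\omega_1}$ can dominate every $f_{a_n}$. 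Consequently $\{f_\alpha : \alpha \in A\}$ is unbounded in $\omega^{\omega_1}$, as required.

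No serious obstacle arises: the entire argument rests on two elementary facts --- that countable subsets of $\omega_1$ are bounded in $\omega_1$, and that every countably infinite ordinal admits a bijection with $\omega$. The construction is essentially the standard Ulam-matrix encoding, with the enumerations $e_\gamma$ chosen independently for each $\gamma$ and no coherence required between different $\gamma$'s.
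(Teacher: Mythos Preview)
Your proof is correct and follows essentially the same approach as the paper: both construct $f_\alpha$ by reading off, at each coordinate $\gamma>\alpha$, the position of $\alpha$ under a fixed injection of $\gamma$ into $\omega$, and then observe that at any coordinate above $\sup$ of an infinite set the values are distinct and hence unbounded. The only cosmetic differences are that the paper uses injections $b_\gamma:\gamma\to\omega$ (defined for all $\gamma$) and adds $1$ to guarantee nonzero values, whereas you use bijections $e_\gamma:\omega\to\gamma$ for $\gamma\ge\omega$ and argue distinctness directly.
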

\begin{proof}
For each $\alpha < \omega_1$ fix an injection $b_\alpha : \alpha \to \omega$, and then define $f_\alpha : \omega_1 \to \omega$ by $f_\alpha (\beta)=b_\beta (\alpha)+1$ if $\beta > \alpha$ and otherwise zero. 
Then $\{f_\alpha : \alpha <\omega_1\}$ is uncountable (indeed, the $f_\alpha$ are distinct since the smallest $\beta$ with $f_\alpha(\beta)\neq 0$ is $\beta = \alpha+1$) but every infinite subset  is unbounded. To see this take any countably infinite subset $S$ of $\omega_1$. Let $\alpha_\infty = \sup S +1$. Then for $\alpha$ in $S$ we have $\alpha <\alpha_\infty$, so $f_\alpha (\alpha_\infty)=b_{\alpha_\infty}(\alpha)+1$. Since $S$ is infinite and $b_{\alpha_\infty}$ is injective, we see the $f_\alpha$'s for $\alpha$ in $S$ are unbounded on $\alpha_\infty$. 
\end{proof}

\subsection{$\ssum$-Products}
$\ssum$-products are  directed subsets of full products which are large, but not `too large' to immediately preclude them from being calibre $\omoneom$.  In particular, they do not project onto an uncountable subproduct of the full product. 

For the remainder of this section, every directed set will be assumed to have a minimum element. If $P$ is a directed without a minimum, then we can add one, and this changes no Tukey properties of $P$. In the case when $P$ is a topological directed set, we can make the new minimum element isolated, and this changes no (relevant) topological properties of $P$.
Given a family, $\{P_\alpha : \alpha < \kappa\}$ of directed sets, where each $P_\alpha$ has minimum element $0_\alpha$, the $\ssum$-product of the family, denoted $\sum_\alpha P_\alpha$, is the subset $\{(p_\alpha)_\alpha   : \supp {(p_\alpha)_\alpha} \text{ is countable}\}$  of $\prod_\alpha P_\alpha$ with the product order. Here, $\supp{(p_\alpha)_\alpha} = \{\alpha \in \kappa : p_\alpha \ne 0_\alpha\}$ is the support. We write $\sum P^\kappa$ for $\sum_\alpha P_\alpha$ if each directed set $P_\alpha$ in the family is (Tukey equivalent to) $P$. 

A space $X$ is \emph{cosmic} if it has a countable network of closed sets; in other words, there is a countable collection, $\mathcal{C}$, of closed subsets such that whenever some point $x$ of $X$ is in an open set $U$, there is a $C$ from $\mathcal{C}$ such that $x \in C \subseteq U$. Among $T_3$ spaces, a space is cosmic if and only if it is the continuous image of a separable metrizable space. 
Evidently every second countable and $T_3$ space is cosmic. 

\begin{theorem}\label{Sigma_cosmic_CSB}
Let $\{P_\alpha : \alpha < \kappa\}$ be a family of cosmic topological directed sets with CSBS.
Then  $\sum_\alpha P_\alpha$ has calibre $(\omega_1,\omega)$. 
\end{theorem}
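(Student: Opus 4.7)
The plan is to combine a topological sub-lemma with a combinatorial reduction exploiting the countable-support structure of the $\Sigma$-product.

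First, I will establish a sub-lemma: every cosmic (and $T_3$) CSBS topological directed set $Q$ has calibre $(\omega_1,\omega)$. The idea is to use the characterization (noted already in the paper) that a cosmic $T_3$ space is the continuous image of a separable metrizable space, so fix a continuous surjection $f \colon M \to Q$ with $M$ separable metric. Given uncountable $A \subseteq Q$, choose a preimage $m_a \in f^{-1}(a)$ for each $a \in A$; the resulting set is uncountable in the second-countable space $M$, so by first countability and Lindel\"ofness of $M$ it contains a convergent subsequence $(m_{a_n})_n \to m^*$ of distinct points. Pushing forward by $f$ yields a convergent sequence $(a_n)_n \to f(m^*)$ of distinct elements of $A$, and CSBS then produces the desired infinite bounded subsequence. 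A countable product of cosmic CSBS directed sets is itself cosmic (a product of countably many countable networks is a countable network) and CSBS (a convergent sequence in the product converges coordinate-wise, and diagonal induction using CSBS in each factor extracts a subsequence bounded in every coordinate), so the sub-lemma applies to any such countable product.

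For the main argument, let $S \subseteq \sum_\alpha P_\alpha$ be uncountable. Apply the $\Delta$-system lemma to the family $\{\supp(s) : s \in S\}$ of countable supports to pass to an uncountable $S' \subseteq S$ whose supports form a $\Delta$-system with countable root $R$. Consider the restriction map $\rho \colon S' \to \prod_{\alpha \in R} P_\alpha$, $\rho(s) = s|_R$, valued in the cosmic CSBS countable product. Two cases arise: if $\rho(S')$ is countable, pigeonhole yields an uncountable fiber on which $\rho \equiv t$; if $\rho(S')$ is uncountable, the sub-lemma (applied to $\prod_{\alpha \in R} P_\alpha$) yields an infinite subset of $\rho(S')$ bounded by some $u$. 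In either case one obtains an infinite $I \subseteq S'$ together with a bound (either $t$ or $u$) on $R$. Using the $\Delta$-system property that $\{\supp(s) \setminus R : s \in I\}$ is pairwise disjoint, I construct $\bar{u} \in \sum_\alpha P_\alpha$ by setting $\bar{u}$ equal to the chosen bound on $R$, equal to $s(\alpha)$ at each $\alpha \in \supp(s) \setminus R$ for the unique $s \in I$ containing $\alpha$, and $0$ elsewhere. The support of $\bar{u}$ is countable, and $\bar{u}$ dominates every element of $I$, exhibiting $I$ as the sought infinite bounded subset of $S$.

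The most delicate step is the combinatorial $\Delta$-system refinement on the countable supports, which requires care beyond the classical finite-set $\Delta$-system lemma; a Fodor-style pressing-down argument on the sup-of-support function may be required as a substitute. Once the $\Delta$-system (or its analogue) is in hand, the two-case analysis and the explicit construction of the upper bound are routine, and the sub-lemma on cosmic CSBS directed sets---via the continuous-image-of-separable-metric characterization---serves as the clean topological engine driving the proof.
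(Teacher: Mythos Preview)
Your sub-lemma is correct and useful: a cosmic CSBS directed set has calibre $(\omega_1,\omega)$, and the class is closed under countable products. The fatal gap is the $\Delta$-system step. There is \emph{no} ZFC theorem extracting an uncountable $\Delta$-system from an $\omega_1$-sized family of countable sets, and your hoped-for Fodor substitute cannot work either. Concretely, take $\sum \omega^{\omega_1}$ (the very example the paper cares most about) and for each $\alpha<\omega_1$ let $s_\alpha$ have support $[0,\alpha]$. Any two supports are nested, so no uncountable $\Delta$-subsystem exists at all: in a $\Delta$-system with root $R$, two nested members $A\subseteq B$ force $A=R$ and hence $A=B$. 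A pressing-down on $\alpha\mapsto\sup(\supp s_\alpha)$ also gives nothing here, since the sup is $\alpha$ itself. Thus your reduction to a countable-root product collapses exactly in the cases of interest.

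The paper's proof avoids any global combinatorial refinement of the supports. After reducing each factor to a subspace of the Cantor set, it builds inductively a countable $E'\subseteq E$ together with finite sets $A_n\subseteq\kappa$ and finite clopen partitions $\mathcal{C}_n$ of the $\Sigma$-product depending only on coordinates in $A_n$, arranged so that $E'$ meets every cell of every $\mathcal{C}_n$ that $E$ meets. One then picks any $e\in E\setminus E'$ and follows it through the partitions to produce a sequence $(e_n)$ in $E'$ whose projection to the countable set $A=\bigcup_n A_n$ converges; CSBS in each $P_\alpha$ (applied via a diagonal argument along $A$) yields a bounded infinite subsequence, and the upper bound lives in $\sum_\alpha P_\alpha$ because its support is contained in $A$. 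The key idea you are missing is this \emph{internal} construction of the countable coordinate set $A$ from the data, rather than an external $\Delta$-system refinement which is simply unavailable.
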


\begin{proof}
 First, we simplify the factors topologically. Suppose $P$ is a topological directed set which is cosmic and has CSBS. 
 Take a countable network for $P$ of closed sets.
 Refine the topology on $P$ to get a topology $\tau$, by taking the elements of the network as clopen sets. 
 Then $(P,\tau)$ is zero-dimensional, separable metrizable, and also has CSBS.
Without loss of generality, then, we assume all the $P_\alpha$ are subspaces of the Cantor set.

Let $S = \{0,1\}^{<\omega}$, which is the set of all finite sequences of zeros and ones. For each $s$ in $S$, let $|s|$ denote the length of $s$, which is given by $|s| = 0$, if $s$ is the empty sequence, and $|(s_0, \ldots,s_{n-1})|=n$.  For each $\sigma = (s_n)_n$ in the Cantor set, $\{0,1\}^\omega$, and for each $n<\omega$, let $\sigma|n = (s_0,\ldots,s_{n-1})$.  Then the Cantor set has a base of clopen sets $\mathcal{B}=\bigcup_n \mathcal{B}_n$, where $\mathcal{B}_n = \{ B_s : s\in S \text{ and } |s|=n\}$ and $B_s = \{\sigma\in \{0,1\}^\omega : \sigma|n = s\}$.  Now, for any finite subset $F$ of $\kappa$, and for any $n<\omega$, define $\mathcal{C}(F,n)= \left\{ \left( \sum P_\alpha \right) \cap \left( \bigcap_{\alpha \in F}  \pi_{\alpha}^{-1} U_\alpha \right) : U_\alpha \in \mathcal{B}_n \text{ for each } \alpha\in F \right\}$.  Note that since each $\mathcal{B}_n$ is finite, then $\mathcal{C}(F,n)$ is a finite clopen partition of $\sum P_\alpha$.

Take any uncountable subset $E$ of $\sum P_\alpha$. We show some infinite subset of $E$ has an upper bound.  For each subset $G$ of $\sum P_\alpha$, if $G$ meets $E$ then we select a point $e(G)$ in $G \cap E$.  Also, for each $e = (e_\alpha)_\alpha$ in $E$, fix a surjection $f_e : \N \to \supp (e) = \{\alpha<\kappa : e_\alpha\neq 0_\alpha\}$.

Let $E_1$ be the singleton $\{e(\sum P_\alpha)\}$.  Now, inductively define $A_n = \{f_e(i) : e\in E_n,\ 1\le i\le n\}$, $\mathcal{C}_n = \mathcal{C}(A_n,n)$, and $E_{n+1} = E_n\cup \{e(C\setminus E_n) : C\in \mathcal{C}_n \text{ and } (C\setminus E_n)\cap E \neq \emptyset\}$.  Then the $E_n$'s are finite subsets of $E$, the $A_n$'s are finite subsets of $\kappa$, and each $\mathcal{C}_n$ is a finite clopen partition of $\sum P_\alpha$.  We set $E' = \bigcup_n E_n$, $A = \bigcup_n A_n$, and $\mathcal{C} = \bigcup \mathcal{C}_n$.  Note that $A$ is the union of $\{\supp(e) : e \in E'\}$.  Observe that the projections, $\{ \pi_A C : C \in \mathcal{C}\}$, of $\mathcal{C}$ into $P=\prod \{ P_\alpha : \alpha \in A\}$ form a base for the latter space.

Since $E'$ is countable and $E$ is uncountable, we can pick $e$ in $E\setminus E'$.  For each $n$, there is a (unique) $C_n$ in $\mathcal{C}_n$ such that $e$ is in $C_n$. Thus, $(C_n \setminus E_n)\cap E$ is nonempty for each $n$, so by the inductive construction above, the point $e_n = e(C_n \setminus E_n)$ is in $E_{n+1} \setminus E_n$.  Therefore, $E'' = \{e_n : n\in\N\}$ is an infinite subset of $E$.

Write $e = (e_\alpha)_\alpha$, and let $x = (x_\alpha)_\alpha$ be the point in $\sum P_\alpha$ given by $x_\alpha = e_\alpha$ for each $\alpha$ in $A$ and $x_\alpha = 0_\alpha$ otherwise.  The sets $\pi_A C_n$ form a decreasing local base at $\pi_A(x) = \pi_A(e)$ in $P=\prod \{P_\alpha : \alpha \in A\}$ , so the $\pi_A(e_n)$'s converge in $P$ to $\pi_A(x)$. Enumerate $A=\{\alpha_n : n \in \N\}$. Let $S_0=\N$. Recursively, using the fact that $P_{\alpha_i}$ is CSBS for each $i$, find an infinite subset $S_{i}$ of $S_{i-1}$ such that $\{\pi_{\alpha_{i}} (e_n) : n \in S_{i}\}$ has an upper bound $p_{\alpha_{i}}'$ in $P_{\alpha_{i}}$. Pick distinct $n_i$ in $S_i$ for $i \in \N$. For each $i$, let $p_{\alpha_i}$ be an upper bound of $p_{\alpha_i}'$ and $\pi_{\alpha_i}(e_{n_j})$ for $j<i$. For $\alpha \notin A$ let $p_\alpha=0_\alpha$.  Then $p=(p_\alpha)_\alpha$ is an upper bound for the infinite set $E'''=\{e_{n_i} : i \in \N\}$ in $\sum P_\alpha$. 
\end{proof}




We can generalize Theorem~\ref{Sigma_cosmic_CSB} by applying the next result.

\begin{theorem}
Let $\mathcal{Q}$ be a class of directed sets.  The following are equivalent:

(i) for any family $\{P_\alpha : \alpha<\kappa\}$ of directed sets in the class $\mathcal{Q}$, the $\ssum$-product $\sum P_\alpha$ has calibre $\omoneom$, and

(ii) for any family $\{P_\alpha : \alpha<\kappa\}$ of directed sets in the class \type{$\mathcal{Q}$}{countably directed}, the $\ssum$-product $\sum P_\alpha$ has calibre $\omoneom$.
\end{theorem}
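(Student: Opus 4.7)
The plan is to establish the two implications separately; only one direction is substantive. For (ii) $\Rightarrow$ (i), I would observe that every $P \in \mathcal{Q}$ automatically lies in \type{$\mathcal{Q}$}{countably directed}: applying Lemma~\ref{trivial_types} to the identity $\phi\colon P \to P$ and to the property ``countably directed'' (which is shared by every directed set with a maximal element, since that maximum bounds any countable subset), $P$ acquires type \type{$P$}{countably directed}. Hence any family drawn from $\mathcal{Q}$ is automatically a family in \type{$\mathcal{Q}$}{countably directed}, and (ii) specializes to (i).

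For the substantive direction (i) $\Rightarrow$ (ii), I would take a family $\{P_\alpha : \alpha<\kappa\}$ in \type{$\mathcal{Q}$}{countably directed} and fix, for each $\alpha$, a $Q_\alpha \in \mathcal{Q}$ together with a $Q_\alpha$-ordered cover $\{P_{\alpha,q} : q \in Q_\alpha\}$ of $P_\alpha$ by countably directed subsets. After the harmless replacement $P_{\alpha,q} \mapsto P_{\alpha,q} \cup \{0_{P_\alpha}\}$ (which preserves both countable directedness and the $Q_\alpha$-ordering), I may assume $0_{P_\alpha} \in P_{\alpha,q}$ for every $q$. I then lift the covers to a $\sum_\alpha Q_\alpha$-ordered cover of $\sum_\alpha P_\alpha$ by setting, for each $q=(q_\alpha)_\alpha \in \sum_\alpha Q_\alpha$,
$$P_q = \left\{\, p=(p_\alpha)_\alpha \in \sum\nolimits_\alpha P_\alpha \;:\; p_\alpha \in P_{\alpha,q_\alpha} \text{ for every } \alpha\,\right\}.$$
The ordering condition is immediate, and coverage holds because any $p$ with countable support requires only countably many coordinates $q_\alpha$ to be chosen non-minimal, so the chosen $q$ again lies in the $\ssum$-product.

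The crux is showing that each $P_q$ is countably directed. Given a countable $\{p^{(n)}\}_n \subseteq P_q$, the combined support $A = \bigcup_n \supp(p^{(n)})$ is a countable subset of $\kappa$. For $\alpha \in A$, I use countable directedness of $P_{\alpha,q_\alpha}$ to pick an upper bound $p^*_\alpha \in P_{\alpha,q_\alpha}$ of $\{p^{(n)}_\alpha\}_n$; for $\alpha \notin A$, every $p^{(n)}_\alpha$ equals $0_{P_\alpha}$, so I set $p^*_\alpha = 0_{P_\alpha}$, which belongs to $P_{\alpha,q_\alpha}$ by the normalization. Then $\supp(p^*) \subseteq A$ is countable, so $p^* \in \sum_\alpha P_\alpha \cap P_q$, and $p^*$ bounds each $p^{(n)}$. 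Thus $\sum_\alpha P_\alpha$ has type \type{$\sum_\alpha Q_\alpha$}{countably directed}.

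To conclude, I would apply (i) to the family $\{Q_\alpha : \alpha < \kappa\} \subseteq \mathcal{Q}$: the directed set $\sum_\alpha Q_\alpha$ then has calibre $\omoneom$. Since countably directed trivially implies relative calibre $(\omega,\omega)$ in any ambient directed set (the whole countable set serves as a countable bounded subset of itself), Lemma~\ref{calibre_types} with $\kappa=\omega_1$ and $\lambda=\mu=\nu=\omega$ delivers calibre $\omoneom$ for $\sum_\alpha P_\alpha$. The step I expect to require the most care is the countable directedness of the $P_q$'s, where one must simultaneously bound coordinate-by-coordinate \emph{and} keep the resulting upper bound inside the $\ssum$-product; the pre-normalization $0_{P_\alpha} \in P_{\alpha,q}$ is precisely what makes the off-support coordinates behave and avoids any delicacy.
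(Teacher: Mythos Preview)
Your proof is correct. The direction (ii) $\Rightarrow$ (i) is handled exactly as in the paper, via Lemma~\ref{trivial_types}. For (i) $\Rightarrow$ (ii) the underlying idea is the same as the paper's---use $\sum_\alpha Q_\alpha$ as an auxiliary directed set and exploit countable directedness of the pieces to bound countable sets coordinate-wise---but the packaging differs. You build an explicit $\sum_\alpha Q_\alpha$-ordered cover of $\sum_\alpha P_\alpha$ by countably directed sets and then invoke Lemma~\ref{calibre_types}. The paper instead defines a single map $\psi : \sum_\alpha P_\alpha \to \sum_\alpha Q_\alpha$ (sending each $p_\alpha$ to some $q$ with $p_\alpha \in P_{\alpha,q}$) and shows that $\psi$ takes countable unbounded sets to unbounded sets, then invokes Lemma~\ref{muTukey_pres_cal}. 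These are dual formulations of the same construction: your cover $\{P_q\}$ and the paper's map $\psi$ are related by $p \in P_{\psi(p)}$, and the key computation---bounding a countable family once its image in $\sum_\alpha Q_\alpha$ is bounded---is identical in both. Your normalization (adding $0_{P_\alpha}$ to every $P_{\alpha,q}$) is slightly different from the paper's (forcing $P_{\alpha,0'_\alpha} = \{0_\alpha\}$), but both serve the same purpose of keeping supports inside the $\ssum$-product. Your route stays entirely within the ``type'' framework of Section~\ref{sec:pre}, which is arguably cleaner; the paper's route is a touch more direct and avoids explicitly verifying the cover is countably directed.
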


\begin{proof}
By Lemma~\ref{trivial_types}, a directed set $Q$ always has type \type{$Q$}{countably directed}, so $\mathcal{Q}$ is contained in \type{$\mathcal{Q}$}{countably directed}, and (ii) immediately implies (i).

Assume (i) and let $\{P_\alpha : \alpha<\kappa\}$ be a family of directed sets in the class \type{$\mathcal{Q}$}{countably directed} such that each $P_\alpha$ has a minimal element $0_\alpha$.  So for every $\alpha<\kappa$, we have a directed set $Q_\alpha$ in $\mathcal{Q}$ such that $P_\alpha = \bigcup \{P_{\alpha,q} : q\in Q_\alpha\}$, where each $P_{\alpha,q}$ is countably directed and $P_{\alpha,q_1} \subseteq P_{\alpha,q_2}$ whenever $q_1\le q_2$ in $Q_\alpha$.  We may assume that $Q_\alpha$ contains a minimal element $0'_\alpha$ and that $P_{\alpha,0'_\alpha} = \{0_\alpha\}$.

Fix a map $\psi_\alpha: P_\alpha \to Q_\alpha$ for each $\alpha<\kappa$ such that $p\in P_{\alpha,\psi_\alpha(p)}$ for each $p$ in $P_\alpha$.  We can choose $\psi_\alpha$ so that $\psi_\alpha(0_\alpha) = 0'_\alpha$.  Then let $\psi = \prod_\alpha \psi_\alpha : \prod_\alpha P_\alpha \to \prod_\alpha Q_\alpha$, which restricts to a map from $\sum P_\alpha$ into $\sum Q_\alpha$.  By (i), we know that $\sum Q_\alpha$ has calibre $(\omega_1,\omega)$, so according to Lemma~\ref{muTukey_pres_cal}, it suffices to show that for any unbounded countable subset $E$ of $\sum P_\alpha$, its image $\psi(E)$ is unbounded in $\sum Q_\alpha$.

Let $E$ be a countable subset of $\sum P_\alpha$, and suppose $\psi(E)$ is bounded (above) by some $q = (q_\alpha)_\alpha$ in $\sum Q_\alpha$.  Then for every $e = (e_\alpha)_\alpha$ in $E$ and any $\alpha<\kappa$, we have $\psi_\alpha(e_\alpha) \le q_\alpha$ and so $e_\alpha \in P_{\alpha,\psi_\alpha(e_\alpha)} \subseteq P_{\alpha,q_\alpha}$.  As $P_{\alpha,q_\alpha}$ is countably directed, there is a $p_\alpha$ in $P_{\alpha,q_\alpha}$ such that $e_\alpha \le p_\alpha$ for each $e\in E$.  Thus, $p=(p_\alpha)_\alpha$ is an upper bound for $E$ in $\prod_\alpha P_\alpha$.  Moreover, since $P_{\alpha,0'_\alpha} = \{0_\alpha\}$ for every $\alpha$, then $\supp(p) \subseteq \supp(q)$, and so $p$ is in $\sum P_\alpha$.
\end{proof}

\begin{corollary}
    Let $\{P_\alpha : \alpha < \kappa\}$ be a family of directed sets of type \type{cosmic and CSBS}{countably directed}.
Then  $\sum_\alpha P_\alpha$ has calibre $(\omega_1,\omega)$. 
\end{corollary}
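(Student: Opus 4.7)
The plan is to deduce this corollary as an immediate instance of the preceding theorem, by taking $\mathcal{Q}$ to be the class of cosmic topological directed sets with CSBS. Under this choice, the class \type{$\mathcal{Q}$}{countably directed} is exactly the class \type{cosmic and CSBS}{countably directed} appearing in the hypothesis of the corollary, so the corollary's conclusion coincides with condition (ii) of the preceding theorem specialized to this $\mathcal{Q}$.

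It then remains only to verify condition (i) of that theorem for our chosen $\mathcal{Q}$, namely that every $\ssum$-product of cosmic CSBS directed sets has calibre $\omoneom$. But this is precisely the content of Theorem~\ref{Sigma_cosmic_CSB}. Invoking the equivalence (i) $\Leftrightarrow$ (ii) of the preceding theorem then delivers (ii) for our $\mathcal{Q}$, which is the corollary. There is no real obstacle here: the substantive work was carried out earlier in the section---first in establishing Theorem~\ref{Sigma_cosmic_CSB} and then in proving the transfer principle of the preceding theorem---and the corollary is simply a matter of specializing $\mathcal{Q}$ and reading off the consequence.
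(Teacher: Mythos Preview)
Your proposal is correct and matches the paper's intended approach: the corollary is stated without proof precisely because it follows immediately from the preceding theorem by taking $\mathcal{Q}$ to be the class of cosmic CSBS directed sets and invoking Theorem~\ref{Sigma_cosmic_CSB} for condition (i).
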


\subsection{Finite and Countable Products}

In this section we present sufficient conditions for finite or countable products to be calibre $\omoneom$, and deduce sufficient conditions for directed sets to be productively or powerfully calibre $\omoneom$. We start by applying the topological approach. We then look at how to use sufficiently strong calibres to get productivity. Next, we use types to `boost' powerful or productive calibre $\omoneom$ from pieces of a directed set to the whole directed set. Finally, we combine the topology and the types. 

Our first lemma can easily be proved directly by noting that a countable product of CSB [respectively, CSBS] topological directed sets is again CSB [CSBS], and we omit the details. But note that the key conclusion also follows from Theorem~\ref{Sigma_cosmic_CSB}.  
\begin{lemma} \label{prod_CSB}
The class of second countable directed sets with CSB [CSBS] is closed under countable products. Hence every CSBS second countable directed set is powerfully calibre $\omoneom$.
\end{lemma}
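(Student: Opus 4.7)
The three parts of the statement are: (a) countable products preserve ``second countable + CSB'', (b) countable products preserve ``second countable + CSBS'', and (c) any second countable CSBS directed set is powerfully calibre $\omoneom$. I would handle them in that order, leaning on already-established machinery.

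For (a), second countability of a countable product is the standard box-of-basic-opens argument (Tychonoff bases), and carries no order-theoretic content. For the CSB part, let $(p^{(k)})_k \to p$ in $P = \prod_n P_n$. Projection is continuous, so $(\pi_n(p^{(k)}))_k \to \pi_n(p)$ in $P_n$, and CSB on each factor gives an upper bound $q_n \in P_n$ for the full sequence in coordinate $n$. Then $q = (q_n)_n$ is a coordinatewise, hence product-order, upper bound of $(p^{(k)})_k$.

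For (b), the expected mild obstacle is the diagonal extraction. Given a convergent sequence $(p^{(k)})_k$ in $\prod_n P_n$, I would build subsequences $\N \supseteq S_0 \supseteq S_1 \supseteq \cdots$ where $S_i$ is infinite and $\{\pi_i(p^{(k)}) : k \in S_i\}$ has an upper bound $u_i$ in $P_i$, using CSBS on the convergent coordinate sequence $(\pi_i(p^{(k)}))_k$. Picking $k_i \in S_i$ with $k_0 < k_1 < \cdots$ yields a subsequence whose $i$-th coordinate is, from index $i$ onward, contained in the bounded set witnessed by $u_i$; directedness of $P_i$ lets one absorb the finitely many earlier coordinates to get a genuine coordinatewise upper bound, which is the desired product-order bound.

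For (c), once (b) is in hand it is essentially a citation exercise. If $P$ is second countable and CSBS, then so is $P^\omega$ by (b). A second countable space is first countable and, being hereditarily Lindel\"of, has countable extent. Applying Lemma~\ref{ce_to_om1om}(1) to the topological directed set $P^\omega$ yields calibre $\omoneom$, i.e. $P$ is powerfully calibre $\omoneom$. (Alternatively, as the note in the excerpt indicates, $P^\omega = \sum P^{\aleph_0}$ since supports in a countable product are automatically countable, and Theorem~\ref{Sigma_cosmic_CSB} delivers the conclusion immediately, using that second countable $T_3$ implies cosmic.) No step is really hard; the only point requiring attention is keeping the diagonalization honest in (b).
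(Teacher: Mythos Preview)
Your proposal is correct and follows exactly the approach the paper indicates: the paper omits the details, merely noting that a countable product of CSB [respectively, CSBS] directed sets is again CSB [CSBS], and that the final conclusion also follows from Theorem~\ref{Sigma_cosmic_CSB}. You have supplied precisely those omitted details (the coordinatewise bound for CSB, the diagonal extraction for CSBS, and the appeal to Lemma~\ref{ce_to_om1om}(1) for the calibre conclusion), so there is nothing to add.
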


\begin{corollary}
If $M_n$ is a separable metrizable space for each $n$, then $\prod_n \K(M_n)$ has calibre $\omoneom$. 
In particular, every $\K(M)$, where $M$ is separable metrizable, is powerfully calibre $\omoneom$.
\end{corollary}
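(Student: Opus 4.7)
The plan is to reduce the corollary to Lemma~\ref{prod_CSB}. For that, I need to check that each factor $\K(M_n)$ is a second countable directed set with CSBS. Second countability of $\K(M_n)$ follows from the description of the Vietoris base given just after Corollary~\ref{kx_ce}: fix a countable base $\mathcal{B}_n$ for the separable metrizable space $M_n$; then $\{\langle U_1,\ldots,U_k\rangle : k\in\N,\ U_i\in\mathcal{B}_n\}$ is a countable base for $\K(M_n)$. For CSBS, Lemma~\ref{KX_basics} gives the stronger statement that $\K(M_n)$ is KSB, and KSB immediately implies CSB (a convergent sequence together with its limit is compact, hence bounded), which in turn implies CSBS.

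Next I would invoke Lemma~\ref{prod_CSB} twice. First, closure of the class of second countable CSBS directed sets under countable products yields that $\prod_n \K(M_n)$ is itself second countable and CSBS. Second, since any second countable space is first countable and has countable extent (closed discrete subsets of second countable spaces are countable), Lemma~\ref{ce_to_om1om}(1) applies to $\prod_n \K(M_n)$ and yields calibre $\omoneom$.

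For the ``in particular'' assertion, specialize to $M_n = M$ for all $n$: the first part then says that $\K(M)^\omega$ has calibre $\omoneom$, which is by definition the statement that $\K(M)$ is powerfully calibre $\omoneom$.

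There is no serious obstacle here; the corollary is an almost mechanical composition of three facts already assembled in the paper (KSB for hyperspaces, the explicit Vietoris base, and the closure lemma for CSBS under countable products). The only subtlety worth flagging is that second countability of $\K(M_n)$, rather than merely first countability as in Lemma~\ref{kx_1ctble}, is what we need, and this is why separability of $M_n$ (and not just metrizability) is being used.
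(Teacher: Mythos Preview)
Your proposal is correct and matches the paper's intended (implicit) argument: the corollary is placed immediately after Lemma~\ref{prod_CSB} precisely because it follows once one knows each $\K(M_n)$ is second countable and CSBS, facts you supply via the Vietoris base description and Lemma~\ref{KX_basics}. Your unpacking of the ``hence'' clause via Lemma~\ref{ce_to_om1om}(1) is exactly what the paper has in mind, and your handling of the ``in particular'' is the obvious specialization.
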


\begin{lemma} \label{ctble_prod_rel_cal}
If $\kappa$ is an infinite regular cardinal and $Q_n$ has relative calibre $(\kappa,\kappa,\omega)$ in $P_n$ for each $n<\omega$, then $\prod_n Q_n$ has relative calibre $(\kappa,\omega)$ in $\prod_n P_n$.

In particular, if each $P_n$ has calibre $(\kappa,\kappa,\omega)$, then $\prod_n P_n$ has calibre $(\kappa,\omega)$.
\end{lemma}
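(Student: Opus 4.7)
The plan is to iteratively shrink a given $\kappa$-sized subset $E$ of $\prod_n Q_n$ one coordinate at a time, using the relative calibre hypothesis on each factor, and then diagonally extract an infinite subset with a coordinate-wise upper bound. Throughout, let $\pi_n : \prod_m P_m \to P_m$ denote the $n$-th projection.

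First I would construct a decreasing sequence $E_0 \supseteq E_1 \supseteq E_2 \supseteq \cdots$ of subsets of $E$, each of size exactly $\kappa$, satisfying the following property: for every $n < \omega$, every infinite subset of $\pi_n(E_{n+1})$ is bounded in $P_n$. The construction splits into two cases at stage $n$. If $\pi_n(E_n) \subseteq Q_n$ has size $\kappa$, apply the relative calibre $(\kappa,\kappa,\omega)$ of $Q_n$ in $P_n$ to obtain a $\kappa$-sized $S_n \subseteq \pi_n(E_n)$ every infinite subset of which is bounded in $P_n$, and set $E_{n+1} = E_n \cap \pi_n^{-1}(S_n)$; this has size $\kappa$ since each fiber over $S_n$ is nonempty and $|S_n| = \kappa$. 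If instead $|\pi_n(E_n)| < \kappa$, regularity of $\kappa$ yields a single $q_n \in Q_n$ with $|\pi_n^{-1}(\{q_n\}) \cap E_n| = \kappa$; set $E_{n+1} = E_n \cap \pi_n^{-1}(\{q_n\})$, so $\pi_n(E_{n+1}) = \{q_n\}$ has no infinite subsets and the required property holds vacuously.

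Next, since each $E_{k+1}$ has size $\kappa \ge \omega$, I can pick distinct points $e^{(k)} \in E_{k+1}$ for $k \in \N$. I claim $\{e^{(k)} : k \in \N\}$ is the desired bounded infinite subset of $E$. For each fixed $n$, the tail $\{\pi_n(e^{(k)}) : k \ge n\}$ lies in $\pi_n(E_{n+1})$, so if it is infinite it is bounded in $P_n$ by construction, and if it is finite it is trivially bounded (using directedness of $P_n$). Absorbing the finitely many remaining values $\{\pi_n(e^{(k)}) : k < n\}$ via another application of directedness yields an upper bound $p_n \in P_n$ for the full set $\{\pi_n(e^{(k)}) : k \in \N\}$. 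Then $p = (p_n)_n \in \prod_n P_n$ bounds every $e^{(k)}$.

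The "in particular" clause is the special case $Q_n = P_n$. I do not expect any serious obstacle; the only subtlety is correctly handling the dichotomy at each inductive stage (large projection versus small projection), which is the reason regularity of $\kappa$ is needed in the statement.
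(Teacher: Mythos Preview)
Your proposal is correct and follows essentially the same approach as the paper's proof: both iteratively shrink to a decreasing sequence of $\kappa$-sized sets using the dichotomy (projection has size $\kappa$ vs.\ size $<\kappa$) at each coordinate, then diagonally select one point from each set and bound the tails coordinatewise using directedness to handle the finite initial segments. The only difference is cosmetic---the paper indexes the family by $\kappa$ and shrinks subsets of the index set, while you shrink subsets of $E$ directly.
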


\begin{proof}
Let $\{\vx^\alpha = (x^\alpha_n)_{n<\omega} : \alpha<\kappa\} \subseteq \prod_{n<\omega} Q_n$ where $\vx^\alpha \neq \vx^\beta$ whenever $\alpha\neq \beta$.  Fix $n<\omega$ and suppose $S\subseteq \kappa$ has size $|S|=\kappa$.  If $\{x^\alpha_n : \alpha\in S\}$ has size less than $\kappa$, then  since $\kappa$ is regular, we can find a $\kappa$-sized $S'\subseteq S$ such that $\{x^\alpha_n : \alpha\in S'\}$ is a singleton.  If instead $|\{x^\alpha_n : \alpha\in S\}| = \kappa$, then as $Q_n$ has relative calibre $(\kappa,\kappa,\omega)$ in $P_n$, we can find a $\kappa$-sized $S' \subseteq S$ such that each infinite subset of $\{x^\alpha_n : \alpha \in S'\}$ has an upper bound in $P_n$.

Thus, we can inductively construct a decreasing sequence $(S_n)_{n<\omega}$ of $\kappa$-sized subsets of $\kappa$ such that, for any $n<\omega$, each infinite subset of $\{x^\alpha_n : \alpha\in S_n\}$ has an upper bound in $P_n$ (perhaps vacuously).  Fix distinct $\alpha_n\in S_n$, for each $n<\omega$.  Since $\{\alpha_k : k\ge n\} \subseteq S_n$, then we can find an upper bound $x^\infty_n\in P_n$ for $\{x^{\alpha_k}_n : k\ge n\}$.  Since $P_n$ is directed, then we can also find an upper bound $z_n$ for $\{x^\infty_n, x^{\alpha_0}_n, \ldots, x^{\alpha_{n-1}}_n\}$ in $P_n$, and so $\vz=(z_n)_{n<\omega}$ is an upper bound for $\{\vx^{\alpha_k} : k<\omega\}$ in $\prod_{n<\omega} P_n$.
\end{proof}

\begin{corollary} \label{om1_and_om_powerful}
Calibre $\omega_1$ and calibre $\omega$ each imply powerfully calibre $(\omega_1,\omega)$.
\end{corollary}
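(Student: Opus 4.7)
The plan is to derive this corollary directly from Lemma~\ref{ctble_prod_rel_cal} applied with $\kappa = \omega_1$. That lemma says: if each $P_n$ has calibre $(\omega_1,\omega_1,\omega)$, then the countable product $\prod_n P_n$ has calibre $(\omega_1,\omega)$. Specializing to $P_n = P$ for all $n$, it suffices to show that either of the hypotheses (calibre $\omega_1$ or calibre $\omega$ on $P$) already implies calibre $(\omega_1,\omega_1,\omega)$.

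First I would handle the calibre $\omega_1$ case. Given an uncountable subset $A \subseteq P$, calibre $\omega_1$ produces an uncountable $S \subseteq A$ bounded by some $p \in P$. Then \emph{every} subset of $S$, in particular every countably infinite one, is bounded by $p$ and so trivially contains an infinite bounded subset (itself). Hence $P$ has calibre $(\omega_1,\omega_1,\omega)$.

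The calibre $\omega$ case (equivalently, $P$ is countably directed, as observed in the proof of Theorem~\ref{unctble_prod}) is even quicker: given an uncountable subset $A \subseteq P$, simply take $S = A$. Any countably infinite $S_0 \subseteq S$ is countable and $P$ is countably directed, so $S_0$ is bounded, which again supplies the required infinite bounded sub-subset.

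In both cases Lemma~\ref{ctble_prod_rel_cal} now yields that $P^\omega$ has calibre $(\omega_1,\omega)$, i.e.\ $P$ is powerfully calibre $(\omega_1,\omega)$. There is no genuine obstacle here—the content of the corollary is really packaged inside Lemma~\ref{ctble_prod_rel_cal}, and the only work is the monotonicity observation that calibre $\omega_1$ and countable directedness both trivially strengthen to calibre $(\omega_1,\omega_1,\omega)$.
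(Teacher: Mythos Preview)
Your proof is correct and follows the same route as the paper: both reduce to Lemma~\ref{ctble_prod_rel_cal}, with calibre $\omega_1$ yielding calibre $(\omega_1,\omega_1,\omega)$ exactly as you argue. The only (inessential) difference is in the calibre $\omega$ case: the paper instead applies Lemma~\ref{ctble_prod_rel_cal} with $\kappa=\omega$ to conclude $P^\omega$ has calibre $\omega$ and hence calibre $(\omega_1,\omega)$, whereas you observe that countable directedness already gives calibre $(\omega_1,\omega_1,\omega)$ and then apply the lemma with $\kappa=\omega_1$.
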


\begin{proof}
If $P$ has calibre $\omega_1$, then it also has calibre $(\omega_1,\omega_1,\omega)$, and so $P^\omega$ has calibre $\omoneom$ by Lemma~\ref{ctble_prod_rel_cal}.  On the other hand, if $P$ has calibre $\omega$, then Lemma~\ref{ctble_prod_rel_cal} shows that $P^\omega$ has calibre $\omega$, which implies calibre $\omoneom$.
\end{proof}

Note that `calibre $\omega$' is equivalent to `countably directed'. But the relative versions are not equivalent. 
\begin{lemma} \label{ctble_prod_types}
(i) If $\kappa$ is an infinite regular cardinal and $P_n$ has type \type{$Q_n$}{[relative] calibre $(\kappa,\kappa,\omega)$} for each $n<\omega$, then $\prod_n P_n$ has type \type{$\prod_n Q_n$}{[relative] calibre $(\kappa,\omega)$}.

(ii) If $P_n$ has type \type{$Q_n$}{countably directed} for each $n<\omega$, then $\prod_n P_n$ has type \type{$\prod_n Q_n$}{countably directed}.
\end{lemma}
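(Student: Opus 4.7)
The plan is the obvious one: take witnessing families for each factor and product them together coordinatewise. Specifically, for each $n < \omega$, the hypothesis gives a $Q_n$-ordered family $\{P_{n,q} : q \in Q_n\}$ of subsets of $P_n$ whose union covers $P_n$, where each $P_{n,q}$ has the relevant subset-property (either [relative] calibre $(\kappa,\kappa,\omega)$ or countably directed). For each $\mathbf{q} = (q_n)_n \in \prod_n Q_n$, I would define
\[
P_{\mathbf{q}} \;=\; \prod_n P_{n,q_n} \;\subseteq\; \prod_n P_n.
\]
Then I would verify the two structural conditions of the type definition: the family $\{P_{\mathbf{q}} : \mathbf{q} \in \prod_n Q_n\}$ covers $\prod_n P_n$ (given $(p_n)_n$, choose $q_n \in Q_n$ with $p_n \in P_{n,q_n}$ for each $n$), and it is $\prod_n Q_n$-ordered (if $\mathbf{q} \le \mathbf{q}'$ coordinatewise, then $P_{n,q_n} \subseteq P_{n,q'_n}$ for each $n$, so $P_{\mathbf{q}} \subseteq P_{\mathbf{q}'}$).

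For part (i), the remaining task is to show each $P_{\mathbf{q}}$ has [relative] calibre $(\kappa,\omega)$ in $\prod_n P_n$, and this is precisely what Lemma~\ref{ctble_prod_rel_cal} delivers, applied to the factors $P_{n,q_n}$ (relative calibre $(\kappa,\kappa,\omega)$ in $P_n$) to give a product with relative calibre $(\kappa,\omega)$ in $\prod_n P_n$. The absolute version follows in the same way from the ``in particular'' clause of that lemma.

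For part (ii), I would show directly that $P_{\mathbf{q}} = \prod_n P_{n,q_n}$ is countably directed: given a countable collection $\{(p^k_n)_n : k < \omega\}$ of points in $P_{\mathbf{q}}$, for each coordinate $n$ the set $\{p^k_n : k < \omega\}$ is a countable subset of the countably directed set $P_{n,q_n}$, so it has an upper bound $p^*_n \in P_{n,q_n}$; the resulting tuple $(p^*_n)_n$ lies in $P_{\mathbf{q}}$ and bounds the collection.

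There is no real obstacle here: once the product family $P_{\mathbf{q}}$ is defined, the order and covering conditions are immediate, part (i) is a direct citation of Lemma~\ref{ctble_prod_rel_cal}, and part (ii) is a one-line coordinatewise argument. The only point worth flagging is the parallel ``relative vs.\ absolute'' phrasing: one must check that when ``relative calibre $(\kappa,\kappa,\omega)$ in $P_n$'' is the subset-property, the conclusion yields relative calibre in $\prod_n P_n$ (not merely in $P_{\mathbf{q}}$ itself), which is exactly what Lemma~\ref{ctble_prod_rel_cal} is set up to give.
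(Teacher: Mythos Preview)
Your proposal is correct and follows essentially the same approach as the paper: define $P_{\mathbf{q}} = \prod_n P_{n,q_n}$, check that this gives a $\prod_n Q_n$-ordered cover, invoke Lemma~\ref{ctble_prod_rel_cal} for part~(i), and observe that countable products of countably directed sets are countably directed for part~(ii). Your treatment of the relative-versus-absolute distinction is also exactly right.
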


\begin{proof}
For (i), we prove the relative calibre case.  The proof of the calibre case can be obtained by omitting the parts in square brackets.  Say $P_n = \bigcup\{P_{n,q} : q\in Q_n\}$ where $P_{n,q}\subseteq P_{n,q'}$ whenever $q\le q'$ in $Q_n$ and each $P_{n,q}$ has [relative] calibre $(\kappa,\kappa,\omega)$ [in $P_n$].  For each $\mathbf{q} = (q_n)_{n<\omega}$ in $\prod_n Q_n$, define $R_\mathbf{q} = \prod_n P_{n,q_n}$.  Then $R_\mathbf{q}$ has [relative] calibre $(\kappa,\omega)$ [in $\prod_n P_n$] by Lemma~\ref{ctble_prod_rel_cal}, and $\{R_\mathbf{q} : \mathbf{q} \in \prod_n Q_n\}$ is a $(\prod_n Q_n)$-ordered cover of $\prod_n P_n$.

For (ii), a similar proof works, but note that if each $P_{n,q}$ is countably directed (in itself), then so is each $R_\mathbf{q}$.
\end{proof}

The weakest natural calibre that implies `productively calibre $\omoneom$' is calibre $(\omega_1, \omega_1, \omega, \omega)$. For completeness we include the proof. 
\begin{lemma}\label{calom1om1omom_prod}
If $P$ is calibre $(\omega_1, \omega_1, \omega, \omega)$, then $P$ is productively calibre $\omoneom$.
\end{lemma}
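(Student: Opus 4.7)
The plan is to take an uncountable $E \subseteq P \times Q$ and, via pigeonhole reductions together with the two calibre hypotheses, extract an infinite bounded subset of $E$. Recall that a subset of $P \times Q$ is bounded iff both its $P$- and $Q$-projections are bounded, so the whole task is to coordinate boundedness on both coordinates simultaneously.

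First I would dispose of the easy degenerate case: if some $P$-fiber of $E$ is uncountable, then restricting to that fiber reduces the problem to finding an infinite bounded subset of an uncountable subset of $Q$, which calibre $\omoneom$ of $Q$ provides directly (the $P$-coordinate being constant, hence bounded). So we may assume every $P$-fiber of $E$ is countable, and by passing to an uncountable subset may assume $\pi_P$ is injective on $E$, making $\pi_P(E)$ uncountable in $P$.

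Next, I would apply the calibre hypothesis on $P$ to obtain an uncountable $S \subseteq \pi_P(E)$ such that every infinite subset of $S$ contains an infinite bounded subset; put $E' = E \cap \pi_P^{-1}(S)$, still uncountable. The crucial feature of $S$ --- that it works for \emph{every} infinite sub-collection --- is what will let us commit to a $Q$-side choice first and \emph{then} refine on the $P$-side. Now case-split on $\pi_Q(E')$. If it is countable, pigeonhole produces an uncountable subset of $E'$ with constant $Q$-coordinate; any countably infinite sub-selection has $P$-projection an infinite subset of $S$, which by the calibre property of $S$ contains an infinite bounded subset, lifting to the desired bounded subset of $E$. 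If $\pi_Q(E')$ is uncountable, refine once more to an uncountable $E'' \subseteq E'$ on which $\pi_Q$ is also injective; calibre $\omoneom$ of $Q$ then supplies an infinite $T \subseteq \pi_Q(E'')$ bounded in $Q$. Let $F = E'' \cap \pi_Q^{-1}(T)$, an infinite set whose $P$-projection is an infinite subset of $S$; the calibre property of $S$ then yields an infinite $F' \subseteq F$ whose $P$-projection is bounded, while its $Q$-projection sits inside $T$ and is therefore also bounded.

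The main obstacle is conceptual rather than technical: one must appreciate why the full strength of calibre $(\omega_1,\omega_1,\omega,\omega)$ on $P$ --- rather than mere $\omoneom$ --- is essential. Bare calibre $\omoneom$ would only produce \emph{some} infinite bounded subset of $\pi_P(E')$, with no control over which elements appear; but after shrinking on the $Q$-side to secure a $Q$-bound, we are forced to work with a \emph{particular} infinite subset of $\pi_P(E)$ and need it to contain an infinite bounded refinement. Only a uniform witness $S$, good for every infinite sub-collection, lets the two refinement steps commute in this way.
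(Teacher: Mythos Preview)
Your proof is correct and follows essentially the same approach as the paper's: reduce to an uncountable first projection, apply calibre $(\omega_1,\omega_1,\omega,\omega)$ on $P$ to obtain a ``universally good'' uncountable $S$, then apply calibre $\omoneom$ on $Q$, and finally use the property of $S$ to bound on the $P$-side. Your early injectivity reductions on $\pi_P$ and $\pi_Q$ are a minor cosmetic streamlining that let you skip the paper's final finite/infinite case split on $\pi_1[S_3]$, but the structure is identical.
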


\begin{proof}
Let $Q$ be any directed set with calibre $\omoneom$, and let $S_1$ be any uncountable subset of $P\times Q$.  If $\pi_1[S_1]$ is a countable subset of $P$, then there must be a $p$ in $P$ such that $S_1 \cap \pi_1^{-1}\{p\}$ is uncountable, and then the fact that $Q$ has calibre $\omoneom$ immediately provides an infinite subset of $S_1 \cap \pi_1^{-1}\{p\}$ with an upper bound in $P\times Q$.  Thus, we may assume $\pi_1[S_1]$ is uncountable.

There is now an uncountable subset $P_1$ of $\pi_1[S_1]$ such that each infinite subset of $P_1$ contains an infinite subset with an upper bound in $P$.  Let $S_2 = S_1 \cap \pi_1^{-1}[P_1]$.  By the same idea as the previous paragraph, we may assume $\pi_2[S_2]$ is an uncountable subset of $Q$, and so $\pi_2[S_2]$ contains an infinite subset $Q_1$ with an upper bound in $Q$.

Let $S_3 = S_2 \cap \pi_2^{-1}[Q_1]$.  If $\pi_1[S_3]$ is finite, then it is bounded in $P$, so $S_3$ is an infinite subset of $S$ with an upper bound.  If instead $\pi_1[S_3]$ is infinite, then as a subset of $P_1$, $\pi_1[S_3]$ must contain an infinite set $P_2$ with an upper bound in $P$.  Then $S_4 = S_3 \cap \pi_1^{-1}[P_2]$ is an infinite subset of $S$ with an upper bound.
\end{proof}

\begin{lemma}
If $Q$ is productively calibre $\omoneom$ and $P$ has type \type{$Q$}{relative calibre $\omega$}, then $P$ is also productively calibre $\omoneom$.
\end{lemma}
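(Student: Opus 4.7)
The plan is to fix an arbitrary directed set $R$ with calibre $\omoneom$ and show that $P \times R$ has calibre $\omoneom$. Using the hypothesis on $P$, fix a $Q$-ordered cover $\{P_q : q \in Q\}$ of $P$ with each $P_q$ having relative calibre $\omega$ in $P$, and choose a function $q_* : P \to Q$ with $p \in P_{q_*(p)}$ for every $p \in P$. Given an uncountable $T \subseteq P \times R$, I would feed it into the auxiliary map $\psi : P \times R \to Q \times R$ defined by $\psi(p,r) = (q_*(p), r)$, and split into cases based on whether $\psi(T)$ is countable. The reason this is useful is that $Q \times R$ has calibre $\omoneom$ because $Q$ is productively calibre $\omoneom$.

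In the case that $\psi(T)$ is uncountable, I would apply calibre $\omoneom$ of $Q \times R$ to extract an infinite set of distinct points $\{(q_n, r_n) : n \in \N\}$ in $\psi(T)$ with an upper bound $(q^*, r^*)$ in $Q \times R$. Lifting to $T$ by picking $(p_n, r_n) \in T$ with $\psi(p_n, r_n) = (q_n, r_n)$, the $Q$-ordering of the cover gives $p_n \in P_{q_n} \subseteq P_{q^*}$ for every $n$. The relative calibre $\omega$ of $P_{q^*}$ in $P$ then supplies an infinite subset of the countable set $\{p_n : n \in \N\}$ that is bounded in $P$ by some $p^*$, and $(p^*, r^*)$ bounds the corresponding infinite subset of $T$.

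In the remaining case that $\psi(T)$ is countable, pigeonhole on $T$ supplies a single pair $(q^*, r^*) \in \psi(T)$ whose $\psi$-preimage meets $T$ in an uncountable set, every element of which has the form $(p, r^*)$ with $p \in P_{q^*}$. Pick any countably infinite set of such $p$'s; relative calibre $\omega$ of $P_{q^*}$ in $P$ again produces an infinite bounded subset in $P$, and pairing with $r^*$ gives the required infinite bounded subset of $T$.

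The main obstacle is simply the bookkeeping around the fact that the auxiliary map $\psi$ may be highly non-injective on $T$: uncountability of $T$ does not automatically transfer to uncountability of $\psi(T)$. The case split on $|\psi(T)|$ is what resolves this, with each side handled by the appropriate hypothesis ($Q$-productivity on one side, relative calibre $\omega$ on the other). A minor preliminary point is to unfold the definition of relative calibre $(\omega,\omega,\omega,\omega)$ in $P$ to the working form ``every countable subset of $P_q$ has an infinite subset that is bounded in $P$'', which is exactly what both cases consume.
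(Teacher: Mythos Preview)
Your argument is correct. The paper's proof is three lines of type machinery: it observes that any calibre-$\omoneom$ directed set $R$ trivially has type \type{$R$}{relative calibre $\omega$} (Lemma~\ref{trivial_types}), applies Lemma~\ref{ctble_prod_types} to conclude that $P \times R$ has type \type{$Q \times R$}{relative calibre $\omega$}, and finishes with Lemma~\ref{calibre_types}. Your proof is that same argument unwound by hand: the map $\psi$ built from your selector $q_*$ is exactly what encodes the $(Q\times R)$-ordered cover of $P\times R$, and your case split on $|\psi(T)|$ is precisely the split inside the proof of Lemma~\ref{calibre_types}. Your version buys self-containedness; the paper's buys brevity once the lemmas are in place. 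One minor wrinkle worth patching: in your uncountable-image case the set $\{p_n : n\in\N\}$ could be finite (distinctness of the $(q_n,r_n)$ forces only distinctness of the lifts $(p_n,r_n)$, not of the first coordinates alone), but then it is bounded in $P$ by directedness and $(p^*,r^*)$ bounds the full infinite set $\{(p_n,r_n):n\in\N\}$ immediately.
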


\begin{proof}
Let $S$ be any directed set with calibre $(\omega_1,\omega)$.  By Lemma~\ref{trivial_types}, $S$ has type \type{$S$}{relative calibre $\omega$}, and so by Lemma~\ref{ctble_prod_types}, $P\times S$ has type \type{$Q\times S$}{relative calibre $\omega$}.  Since $Q\times S$ has calibre $(\omega_1,\omega)$, then by Lemma~\ref{calibre_types}, $P\times S$ does also.
\end{proof}

\begin{corollary}\label{sigma-prod}
If $P = \bigcup_{n<\omega} P_n$ where each $P_n$ has relative calibre $\omega$ in $P$, then $P$ is productively calibre $(\omega_1,\omega)$.
\end{corollary}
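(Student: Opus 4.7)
The plan is to invoke the immediately preceding lemma with $Q = \omega$. To do so, I must verify two things: that $\omega$ itself is productively calibre $\omoneom$, and that $P$ has type \type{$\omega$}{relative calibre $\omega$}.

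For the first point, observe that the directed set $\omega$ has no uncountable subsets at all, so it vacuously satisfies calibre $\omega_1$ and, in particular, calibre $(\omega_1,\omega_1,\omega,\omega)$. Lemma~\ref{calom1om1omom_prod} then guarantees that $\omega$ is productively calibre $\omoneom$.

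For the second point, I would replace the given cover $\{P_n : n < \omega\}$ by its running unions $P_n' = \bigcup_{k \le n} P_k$, which form an $\omega$-ordered cover of $P$. It remains to check that each $P_n'$ inherits relative calibre $\omega$ in $P$. Given any countable subset $A$ of $P_n'$, partition $A$ as $\bigcup_{k \le n}(A \cap P_k)$; by pigeonhole at least one piece $A \cap P_k$ is infinite (the case of finite $A$ being trivial, since $P$ is directed). Applying relative calibre $\omega$ of $P_k$ in $P$ to $A \cap P_k$ produces the countable subset of $A$ witnessing relative calibre $\omega$ of $P_n'$ in $P$.

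With both ingredients in place, the preceding lemma delivers the conclusion that $P$ is productively calibre $\omoneom$. There is no substantive obstacle; the only point requiring a moment's thought is that relative calibre $\omega$ is preserved under finite unions of subsets, and this reduces at once to pigeonhole.
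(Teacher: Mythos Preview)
Your proposal is correct and follows essentially the same approach as the paper: the paper simply asserts ``We may assume $P_n \subseteq P_{n+1}$ for each $n$'' and that $\omega$ is productively calibre $\omoneom$, then applies the preceding lemma, whereas you spell out both of these points explicitly (via running unions and the pigeonhole check, and via Lemma~\ref{calom1om1omom_prod}, respectively).
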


\begin{proof}
We may assume $P_n \subseteq P_{n+1}$ for each $n$, and so $P$ is of type \type{$Q$}{relative calibre $\omega$}, where $Q=\omega$ is productively calibre $(\omega_1,\omega)$.  Apply the previous lemma.
\end{proof}

\begin{corollary} \label{ctble_prod_type_class}
Let $\mathcal{Q}$ be a class of directed sets such that whenever $Q_n$ is in $\mathcal{Q}$ for each $n<\omega$, there is a $Q$ in $\mathcal{Q}$ with $Q \tq \prod_n Q_n$. Let $\mathcal{D}$ denote the class of all Dedekind complete directed sets.

(i) The class $\mathcal{D} \cap $\type{$\mathcal{Q}$}{countably directed} is closed under countable products.

(ii) If $\mathcal{Q}\subseteq \mathcal{D}$, then \type{$\mathcal{Q}$}{countably directed} is closed under countable products.
\end{corollary}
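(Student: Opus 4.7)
The plan is to prove (ii) first as the cleaner case and then adapt for (i).

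For (ii), let $\{P_n : n < \omega\}$ be a family in \type{$\mathcal{Q}$}{countably directed}, with each $P_n$ of type \type{$Q_n$}{countably directed} for some $Q_n \in \mathcal{Q}$. By Lemma~\ref{ctble_prod_types}(ii), $\prod_n P_n$ has type \type{$\prod_n Q_n$}{countably directed}. Because each $Q_n$ is Dedekind complete, so is $\prod_n Q_n$ (coordinate-wise suprema). By the hypothesis on $\mathcal{Q}$, choose $Q \in \mathcal{Q}$ with $Q \tq \prod_n Q_n$. Then Lemma~\ref{cd_type_tq}(iii)---which does not require $R(\prod_n P_n)$ to be Dedekind complete---transfers the type to give $\prod_n P_n$ of type \type{$Q$}{countably directed}, completing (ii).

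For (i), each $P_n$ is Dedekind complete, hence so is $\prod_n P_n$, but now we can no longer guarantee that $\prod_n Q_n$ is Dedekind complete, and $R(\prod_n P_n)$ for $R = \,$``countably directed'' is not in general Dedekind complete (two countably directed subsets with a common countably directed upper bound need not have a \emph{smallest} countably directed superset). The key trick is to strengthen $R$ to $R^*$: a subset $A$ of a Dedekind complete directed set $P$ has property $R^*$ if $A$ is countably directed \emph{and} closed under taking suprema in $P$ of its own countable subsets (these suprema exist, since countable directedness of $A$ together with Dedekind completeness of $P$ make every countable subset of $A$ bounded in $P$). The first task is to verify that $R^*(P)$ is Dedekind complete whenever $P$ is: given a family $\mathcal{A}$ bounded by $B \in R^*(P)$, the transfinite closure of $\bigcup \mathcal{A}$ under ``take the supremum in $P$ of any countable subset,'' iterated through stage $\omega_1$, stays inside $B$ (because $B$ is itself $R^*$), is countably directed (any countable subset of the closure sits in some stage $\alpha < \omega_1$, and its $\sup_P$ appears at stage $\alpha + 1$), and is easily seen to be minimal among $R^*$-sets containing $\bigcup \mathcal{A}$.

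To finish (i), lift each type \type{$Q_n$}{countably directed} to \type{$Q_n$}{$R^*$} by replacing every $P_{n,q}$ in the given $Q_n$-ordered cover of $P_n$ with its $R^*$-closure in $P_n$ (constructed by the same transfinite closure, whose monotonicity preserves the $Q_n$-ordering). The analogue of Lemma~\ref{ctble_prod_types}(ii) with $R^*$ in place of ``countably directed'' is immediate because suprema in $\prod_n P_n$ are coordinate-wise, so a product of $R^*$-subsets of the factors is again an $R^*$-subset of $\prod_n P_n$. This yields $\prod_n P_n$ of type \type{$\prod_n Q_n$}{$R^*$}, and because $R^*(\prod_n P_n)$ is Dedekind complete, Lemma~\ref{cd_type_tq}(ii) now transfers the type to \type{$Q$}{$R^*$} for any $Q \in \mathcal{Q}$ with $Q \tq \prod_n Q_n$; since $R^*$ implies countably directed, we conclude \type{$Q$}{countably directed}. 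The main obstacle will be verifying Dedekind completeness of $R^*(\prod_n P_n)$; the remaining steps reduce to elementary checks that the $R^*$-closure operation behaves well under countable products and is preserved under the reindexing induced by a Tukey quotient.
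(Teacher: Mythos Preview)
Your argument for (ii) is correct and essentially identical to the paper's: both use Lemma~\ref{ctble_prod_types}(ii) to obtain type \type{$\prod_n Q_n$}{countably directed}, observe that $\prod_n Q_n$ is Dedekind complete, and then apply Lemma~\ref{cd_type_tq}(iii).

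For (i) your route diverges from the paper's, and in a way that is arguably more careful. The paper's proof simply notes that $P=\prod_n P_n$ is Dedekind complete and then invokes Lemma~\ref{cd_type_tq} to transfer the type from $\prod_n Q_n$ down to $Q$. But Lemma~\ref{cd_type_tq}(ii) requires $R(P)$, not $P$, to be Dedekind complete, and you are right that for $R={}$``countably directed'' this fails even when $P$ is Dedekind complete (your parenthetical counterexample is correct: e.g.\ in $\omega^2$ the singletons $\{(1,0)\}$ and $\{(0,1)\}$ have no least countably directed common upper bound). So the paper's one-line appeal to Lemma~\ref{cd_type_tq} is at best elliptical. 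Your strengthening $R^*$ (countably directed and closed under $\sup_P$ of countable subsets) is a clean fix: the transfinite closure gives least upper bounds in $R^{*}(P)$, the closure operator is monotone so the $Q_n$-ordering survives, and products of $R^{*}$-sets are $R^{*}$ because suprema in $\prod_n P_n$ are computed coordinatewise. With $R^{*}(\prod_n P_n)$ genuinely Dedekind complete, Lemma~\ref{cd_type_tq}(ii) applies honestly, and weakening back to ``countably directed'' is immediate. All of your outlined verifications go through; the argument is sound.
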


\begin{proof}
We prove (i) first.  Suppose that for each $n<\omega$, $P_n$ is Dedekind complete and has type \type{$Q_n$}{countably directed} where $Q_n$ is in $\mathcal{Q}$.  By Lemma~\ref{ctble_prod_types}, $P = \prod_n P_n$ has type \type{$Q'$}{countably directed} where $Q' = \prod_n Q_n$, and by assumption, there is a $Q$ in $\mathcal{Q}$ such that $Q\tq Q'$.  Since each $P_n$ is Dedekind complete, then so is $P$, and thus Lemma~\ref{cd_type_tq} implies that $P$ has type \type{$Q$}{countably directed}.

For (ii), we use the same notation as for (i).  If each $Q_n$ is Dedekind complete, then so is $Q'$.  Thus, even if $P$ is not Dedekind complete, we can use Lemma~\ref{cd_type_tq} to conclude that $P$ has type \type{$Q$}{countably directed}.
\end{proof}

\begin{corollary}\label{type_pow_rcalom}
Suppose $P_n$ has a type \type{$Q_n$}{relative calibre $\omega$} for each $n<\omega$. If $\prod_n Q_n$ has calibre $(\omega_1,\omega)$, then so does $\prod_n P_n$.  In particular:

(1) If $\kappa = \omega$ or $\kappa = \omega_1$, then any countable product of members of the class \type{calibre $\kappa$}{relative calibre $\omega$} has calibre $(\omega_1,\omega)$.

(2) Each member of the class \type{powerfully calibre $(\omega_1,\omega)$}{relative calibre $\omega$} is powerfully calibre $(\omega_1,\omega)$.
\end{corollary}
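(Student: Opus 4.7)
The plan is to combine Lemma~\ref{ctble_prod_types}(i) with Lemma~\ref{calibre_types}. The first lemma propagates the type structure through countable products, and the second converts a type presentation into a calibre on the target directed set.

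For the main statement, apply Lemma~\ref{ctble_prod_types}(i) with $\kappa = \omega$ (which is infinite and regular). Since relative calibre $(\omega,\omega,\omega)$ is by convention the same as relative calibre $\omega$, the hypothesis that each $P_n$ has type \type{$Q_n$}{relative calibre $\omega$} yields that $\prod_n P_n$ has type \type{$\prod_n Q_n$}{relative calibre $\omega$}. Now feed this into Lemma~\ref{calibre_types} with $\kappa = \omega_1$ (regular) and $\lambda = \mu = \nu = \omega$: since $\prod_n Q_n$ has calibre $(\omega_1,\omega)$ by hypothesis and $\prod_n P_n$ has type \type{$\prod_n Q_n$}{relative calibre $(\omega,\omega)$}, the conclusion is that $\prod_n P_n$ has calibre $(\omega_1,\omega,\omega,\omega) = (\omega_1,\omega)$.

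For part (1), it remains to verify the hypothesis of the main statement in both cases. If $\kappa = \omega$, each $Q_n$ is calibre $\omega$, equivalently countably directed; a countable product of countably directed sets is countably directed, and hence has calibre $(\omega_1,\omega)$. If $\kappa = \omega_1$, then calibre $\omega_1$ of each $Q_n$ yields an $\omega_1$-sized bounded subset, so in fact each $Q_n$ has calibre $(\omega_1,\omega_1,\omega)$; Lemma~\ref{ctble_prod_rel_cal} then delivers $\prod_n Q_n$ with calibre $(\omega_1,\omega)$. In both cases, the main statement applies. For part (2), specialize $P_n = P$ and $Q_n = Q$ where $Q$ is powerfully calibre $(\omega_1,\omega)$; then $\prod_n Q_n = Q^\omega$ has calibre $(\omega_1,\omega)$ by assumption, and the main statement gives $P^\omega = \prod_n P_n$ calibre $(\omega_1,\omega)$, witnessing that $P$ is powerfully calibre $(\omega_1,\omega)$.

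I do not anticipate any substantive obstacle: the proof is a routine bookkeeping combination of earlier results. The one point requiring care is to track the cardinal parameters across invocations --- in particular, to recognise that the $\kappa = \omega$ instance of Lemma~\ref{ctble_prod_types}(i) is precisely what transports relative calibre $\omega$ through countable products, and that calibre $\omega_1$ entails calibre $(\omega_1,\omega_1,\omega)$ (since an $\omega_1$-sized bounded subset has all of its subsets bounded).
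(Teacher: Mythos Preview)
Your proof is correct and follows essentially the same approach as the paper: Lemma~\ref{ctble_prod_types}(i) with $\kappa=\omega$ followed by Lemma~\ref{calibre_types} with $\kappa=\omega_1$ and $\lambda=\mu=\nu=\omega$ for the main claim, and the obvious specialization for part~(2). For part~(1) the paper simply cites Corollary~\ref{om1_and_om_powerful}, whereas you unpack that corollary into its ingredients (countable products of countably directed sets, and Lemma~\ref{ctble_prod_rel_cal} for the calibre~$\omega_1$ case); this is arguably cleaner, since Corollary~\ref{om1_and_om_powerful} is stated only for powers while here the $Q_n$'s may differ.
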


\begin{proof}
The main claim follows from Lemma~\ref{ctble_prod_types} (with $\kappa=\omega$) and Lemma~\ref{calibre_types} (with $\kappa=\omega_1$ and $\lambda=\mu=\nu=\omega$). Statement (1) then follows from Corollary~\ref{om1_and_om_powerful}.  Statement (2) is the special case of the main claim where the $P_n$'s are all the same, and the $Q_n$'s are also all the same.
\end{proof}

\begin{corollary}\label{sigma_pow}
If $P=\bigcup_{n<\omega} P_n$ where each $P_n$ has relative calibre $\omega$ in $P$, then $P$ is powerfully calibre $(\omega_1,\omega)$.
\end{corollary}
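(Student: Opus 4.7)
My plan is to derive this corollary almost immediately from Corollary~\ref{type_pow_rcalom}(2), by exhibiting $P$ as a member of the class \type{powerfully calibre $(\omega_1,\omega)$}{relative calibre $\omega$}.

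First, without loss of generality I would replace $P_n$ by $\bigcup_{k\le n} P_k$ so that the sequence $(P_n)_{n<\omega}$ is increasing. This step is harmless because the union of finitely many sets of relative calibre $\omega$ in $P$ still has relative calibre $\omega$ in $P$: any countable subset of the union meets only finitely many of the $P_k$'s, and hence each piece (being a countable subset of some $P_k$) can be bounded in $P$ using that $P_k$ has relative calibre $\omega$; finally, a finite set of upper bounds is bounded since $P$ is directed.

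With the $P_n$'s increasing, the family $\{P_n : n<\omega\}$ is $\omega$-ordered and covers $P$, so $P$ is of type \type{$\omega$}{relative calibre $\omega$}. Now $\omega$ has calibre $\omega$ (it is countably directed), so by Corollary~\ref{om1_and_om_powerful} it is powerfully calibre $(\omega_1,\omega)$. Hence $P$ lies in the class \type{powerfully calibre $(\omega_1,\omega)$}{relative calibre $\omega$}, and Corollary~\ref{type_pow_rcalom}(2) then yields that $P$ is powerfully calibre $(\omega_1,\omega)$.

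There is no real obstacle here, since all the work has been done in Lemma~\ref{ctble_prod_types} and Lemma~\ref{calibre_types}, which were packaged into Corollary~\ref{type_pow_rcalom}. The only point requiring any care is the preliminary reduction to an increasing union, which is entirely routine. This corollary is the powerful analogue of Corollary~\ref{sigma-prod}, and the structure of the argument mirrors that one exactly, with the role of ``productively calibre $(\omega_1,\omega)$'' replaced by ``powerfully calibre $(\omega_1,\omega)$''.
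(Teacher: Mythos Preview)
Your overall approach is exactly the paper's: reduce to an increasing union so that $P$ has type \type{$\omega$}{relative calibre $\omega$}, observe that $\omega$ is powerfully calibre $(\omega_1,\omega)$, and apply Corollary~\ref{type_pow_rcalom}(2). However, two of your justifications are incorrect.

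First, $\omega$ is \emph{not} countably directed and does \emph{not} have calibre $\omega$: the set $\omega$ itself is a countably infinite subset with no bounded infinite subset. The correct reason $\omega$ is powerfully calibre $(\omega_1,\omega)$ is that $\omega$, being countable, vacuously has calibre $\omega_1$, so Corollary~\ref{om1_and_om_powerful} applies (or one can simply note that $\omega^\omega$ is second countable and CSB). Second, your argument that a finite union $\bigcup_{k\le n} P_k$ has relative calibre $\omega$ treats ``relative calibre $\omega$'' as though it meant ``every countable subset is bounded in $P$'', which it does not (the paper explicitly notes that the relative versions are not equivalent). The correct argument is pigeonhole: any countably infinite subset of $\bigcup_{k\le n} P_k$ meets some $P_j$ in an infinite set, and then relative calibre $\omega$ of $P_j$ supplies the required subset. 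With these two fixes your proof is complete and matches the paper's.
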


\begin{proof}
We may assume $P_n\subseteq P_{n+1}$ for each $n<\omega$.  Then $P$ has type \type{$Q$}{relative calibre $\omega$}, where $Q=\omega$ is powerfully calibre $(\omega_1,\omega)$, so we may apply (2) in the previous corollary.
\end{proof}

\begin{lemma}
Each member of the class \type{$2^o$+CSB}{countably directed} has calibre $\omoneom$.  In particular, each member of the class \type{$\K(\M)$}{countably directed} has calibre $\omoneom$.

Additionally, if $\mathcal{D}$ denotes the class of all Dedekind complete directed sets, then the following classes are closed under countable products:

(1) $\mathcal{D} \cap $\type{$2^o$+CSB}{countably directed}

(2) \type{$(2^o$+CSB$) \cap \mathcal{D}$}{countably directed}

(3) \type{$\K(\M)$}{countably directed}.
\end{lemma}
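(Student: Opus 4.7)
The plan is to obtain the calibre claim by combining two earlier results: the topological sufficient condition for calibre $\omoneom$ from Lemma~\ref{ce_to_om1om}, applied to the parameterizing directed set, and the transfer of calibre through types from Lemma~\ref{calibre_types}. If $P$ has type \type{$Q$}{countably directed} with $Q$ second countable and CSB, then $Q$ is first countable and has countable extent (both immediate consequences of $2^o$) and is CSBS (since CSB implies CSBS), so Lemma~\ref{ce_to_om1om}(1) gives that $Q$ itself has calibre $\omoneom$. Each countably directed piece $P_q$ has relative calibre $(\omega,\omega)$ in $P$, because every countable subset of $P_q$ has an upper bound in $P_q \subseteq P$. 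Hence $P$ is of type \type{$Q$}{relative calibre $(\omega,\omega)$}, and Lemma~\ref{calibre_types} with $\kappa=\omega_1$, $\lambda=\mu=\nu=\omega$ yields $P$ has calibre $\omoneom$. The ``in particular'' clause follows since, for $M$ separable metrizable, $\K(M)$ is itself separable metrizable (hence $2^o$) and KSB by Lemma~\ref{KX_basics}, which implies CSB.

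For the three closure statements, the plan is to invoke Corollary~\ref{ctble_prod_type_class} with appropriate choices of the class $\mathcal{Q}$. In each case it suffices to show $\prod_n Q_n$ itself lies in $\mathcal{Q}$ whenever each $Q_n$ does, giving the required Tukey quotient trivially. For (1), take $\mathcal{Q}$ to be the class of $2^o$+CSB directed sets and apply part (i); closure of $\mathcal{Q}$ under countable products is exactly Lemma~\ref{prod_CSB}. For (2), take $\mathcal{Q} = (2^o\text{+CSB}) \cap \mathcal{D}$ and apply part (ii), combining Lemma~\ref{prod_CSB} with the easy fact that products of Dedekind complete directed sets are Dedekind complete (componentwise least upper bounds). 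For (3), take $\mathcal{Q} = \{\K(M) : M \text{ separable metrizable}\}$; each such $\K(M)$ is Dedekind complete by Lemma~\ref{KX_basics}, and Lemma~\ref{prod_KM} gives $\prod_n \K(M_n) =_T \K(\prod_n M_n)$ where the latter is again of the form $\K(M')$ for $M'$ separable metrizable, so $\mathcal{Q}$ meets the hypotheses of part (ii).

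All ingredients are already in hand, so there is no essential new obstacle; the proof is really a careful assemblage of the earlier machinery. The only point that demands any bookkeeping is the distinction between parts (i) and (ii) of Corollary~\ref{ctble_prod_type_class}: part (i) needs each $P_n$ to be Dedekind complete as a separate hypothesis (used in (1), where $\mathcal{Q}$ itself is not assumed to consist of Dedekind complete directed sets), whereas part (ii) instead demands that the entire parameterizing class $\mathcal{Q}$ consist of Dedekind complete directed sets (used in (2) and (3)).
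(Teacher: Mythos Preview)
Your proposal is correct and follows essentially the same approach as the paper's proof: the calibre claim via Lemma~\ref{ce_to_om1om} plus Lemma~\ref{calibre_types}, and the closure statements via Corollary~\ref{ctble_prod_type_class} combined with Lemma~\ref{prod_CSB} and Lemma~\ref{prod_KM}. Your version is simply more detailed, in particular spelling out which part of Corollary~\ref{ctble_prod_type_class} applies in each of (1)--(3), which the paper leaves implicit.
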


\begin{proof}
Lemma~\ref{ce_to_om1om} implies that every second countable and CSB directed set has calibre $(\omega_1,\omega)$.  Since countably directed implies (relative) calibre $\omega$, then Lemma~\ref{calibre_types} shows that the members of the class \type{$2^o$+CSB}{countably directed} have calibre $\omoneom$.

Closure under countable products follows from Corollary~\ref{ctble_prod_type_class}, Lemma~\ref{prod_CSB}, and Lemma~\ref{prod_KM}.
\end{proof}

\begin{lemma}\label{sigma_rel_csbs}
Let $P$ be a $2^o$ directed set such that $P=\bigcup_n P_n$ where, for each $n$, every sequence on $P_n$ with a limit in $P_n$ has [a subsequence with] an upper bound in $P$. Then we have: 

(a) $P$ is $2^o$ and CSB [CSBS] with respect to some finer topology.

(b) $P$ is type \type{$Q$}{relative calibre $\omega$} for every directed set $Q \tq (M,\K(M))$, where $M=\bigoplus_n P_n$. Thus $P$ is type \type{$\K(M)$}{relative calibre $\omega$}.

(c) Hence, from either (a) or (b), $P$ is powerfully calibre $\omoneom$.

In particular, if  each $P_n$ is $\sigma$-compact, then $P$ is type  \type{$\omega$}{relative calibre $\omega$}, while if  each $P_n$ is analytic, then $P$ is type \type{$\omega^\omega$}{relative calibre $\omega$}. Hence, if (i) each $P_n$ is $\sigma$-compact or (ii) each $P_n$ is analytic and $\omega_1<\mathfrak{b}$, then $P$ is  calibre $(\omega_1, \omega_1, \omega,\omega)$ and therefore productively calibre $\omoneom$. 
\end{lemma}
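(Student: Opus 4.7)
For (a), I would first replace $P_n$ with $P_n \setminus \bigcup_{m<n} P_m$ (the hypothesis passes to subsets) to arrange the $P_n$ pairwise disjoint, and then refine the topology on $P$ by declaring each $P_n$ clopen with its original subspace topology. The resulting $\tau'$ is finer than the original, realizes $P$ as the topological sum $M = \bigoplus_n P_n$ (so $2^o$, since each $P_n$ is), and any $\tau'$-convergent sequence is eventually in a single $P_n$ and convergent there in the original topology; the hypothesis therefore yields CSB [CSBS] for $(P, \tau')$.

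For (b), note that $M$ and $P$ agree as sets and $\K(M) = \K(P, \tau')$. For each $K \in \K(M)$ put $P_K = K$; the family $\{P_K : K \in \K(M)\}$ is then $\K(M)$-ordered and covers $P$. I claim each $P_K$ has relative calibre $\omega$ in $P$: since each $P_n$ is clopen in $M$, the compact set $K$ meets only finitely many $P_n$, and each $K \cap P_n$ is a compact $2^o$ Hausdorff subset of $P_n$, hence sequentially compact. Any countably infinite subset of $K$ must concentrate in some $K \cap P_n$, yielding a convergent subsequence whose hypothesis-supplied [subsequence with] upper bound in $P$ is the required infinite bounded subset. Now, given any $Q \tq (M, \K(M))$, Dedekind completeness of $\K(M)$ (Lemma~\ref{KX_basics}) together with Lemma~\ref{tq_dc}(ii) delivers an order-preserving $\phi : Q \to \K(M)$ whose image covers $M = P$; the family $\{\phi(q) : q \in Q\}$ then witnesses type \type{$Q$}{relative calibre $\omega$}. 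Taking $Q = \K(M)$ with $\phi = \mathrm{id}$ gives the "thus" clause.

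Part (c) follows from either (a), via Lemma~\ref{prod_CSB}, or from (b) via Corollary~\ref{type_pow_rcalom}(2) together with the fact that $\K(M)$ is powerfully calibre $\omoneom$ for separable metrizable $M$. For the "in particular" clauses, if each $P_n$ is $\sigma$-compact then so is $M$, say $M = \bigcup_k C_k$ with $C_k$ compact and increasing, and $k \mapsto C_k$ witnesses $\omega \tq (M, \K(M))$; if each $P_n$ is analytic then $M$ is analytic with a continuous surjection $\pi : \omega^\omega \to M$, and $f \mapsto \pi\bigl(\prod_n [0, f(n)]\bigr)$ is order-preserving from $\omega^\omega$ into $\K(M)$ with every cofinal image covering $M$, witnessing $\omega^\omega \tq (M, \K(M))$. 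Feeding these Tukey quotients into (b) yields the stated types. Finally, $\omega$ is vacuously calibre $\omega_1$ and $\omega^\omega$ is calibre $\omega_1$ precisely when $\omega_1 < \mathfrak{b}$, so Lemma~\ref{calibre_types} yields calibre $(\omega_1, \omega_1, \omega, \omega)$ for $P$, and Lemma~\ref{calom1om1omom_prod} delivers productive calibre $\omoneom$.

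I expect the main technical point to be the compactness step in (b): it requires enough separation on each $P_n$ to get sequential compactness of compact $2^o$ subsets, and a careful matching between "relative calibre $\omega$" as used throughout the paper (every countable subset contains an infinite bounded subset) and exactly what CSBS produces from a convergent sequence in $P_n$. The remaining ingredients are essentially assembly from earlier lemmas.
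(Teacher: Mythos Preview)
Your argument for (a) matches the paper's exactly (the paper phrases it as making $f(x) = \min\{n : x \in P_n\}$ continuous, which amounts to your disjointification). The relative calibre $\omega$ verification, part (c), and the final assembly from Lemmas~\ref{calibre_types} and~\ref{calom1om1omom_prod} are also correct and parallel the paper.

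There is, however, a genuine gap in (b). By carrying the disjointification from (a) into (b) and identifying $M$ with $(P,\tau')$, you have silently replaced the $M = \bigoplus_n P_n$ of the lemma statement (built from the \emph{original} $P_n$) with $M' = \bigoplus_n P_n'$, where $P_n' = P_n \setminus \bigcup_{m<n} P_m$. Your (b) is then established only for $Q \tq (M', \K(M'))$. This becomes a real problem in the ``in particular'' clauses: if the original $P_n$ are $\sigma$-compact, the $P_n'$ need not be (take $P_0 = \mathbb{Q}\cap[0,1]$, $P_1 = [0,1]$, so $P_1' = [0,1]\setminus\mathbb{Q}$), hence $M'$ need not be $\sigma$-compact and you cannot conclude $\omega \tq (M', \K(M'))$. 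The analytic case has the same defect, since analytic minus analytic need not be analytic.

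The fix is easy and is exactly what the paper does: keep the original $P_n$ for (b). Then $M$ and $P$ differ as sets, but there is a continuous surjection $\pi : M \to P$ (the identity on each summand). For $K \in \K(M)$ set $P_K = \pi(K)$; since $K$ meets only finitely many summands, $P_K$ is a finite union of sets $K_n$, each compact in some $P_n$, and your sequential-compactness argument applies verbatim to whichever $K_n$ captures infinitely many points of a given countable subset. Incidentally, your worry about separation is unnecessary: compact plus first countable already yields sequential compactness, with no Hausdorff hypothesis needed.
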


\begin{proof}
First we prove (a). Define $f : P \to \omega$ by $f(x)= \min \{n : x \in P_n\}$. Refine the given topology to a new $2^o$ topology $\sigma$ so that $f$ is continuous. We verify $(P,\sigma)$ is CSB [CSBS]. Take any sequence $(x_n)_n$  converging in $(P,\sigma)$ to $x$. Let $N=f(x)$. Then $x$ is in the open set $f^{-1} \{N\}$, which is contained in $P_N$. So all but finitely many terms of $(x_n)_n$ are in $P_N$. By hypothesis this sequence tail has [a subsequence with] an upper bound in $P$. Hence the full sequence [the subsequence] has an upper bound in $P$.

Now, to prove (b), fix an order-preserving  $\phi : Q \to \K(M)$ witnessing $Q \tq (M,\K(M))$. Define $\phi' : Q \to \K(P)$ by $\phi'(q) = \bigcup \{\phi(q) \cap P_n : n \in \N\}$, which is well-defined since for any compact subset $K$ of $M$, $K\cap P_n$ is nonempty for only finitely many $n$.  Then $\phi'$ is order-preserving and witnesses $Q \tq (P,\K(P))$. Further, each $\phi'(q)$ is relative calibre $\omega$ in $P$, and so $\phi'$ shows $P$ is type \type{$Q$}{relative calibre $\omega$}. To see this take any countably infinite subset $S$ of a $\phi'(q)$. As $\phi'(q)$ is compact, there is an infinite sequence on $S$ with limit in $\phi'(q)$, which is a subset of some $P_n$. By hypothesis, the sequence (and hence $S$) contains an infinite subsequence with upper bound in $P$.

For (c), note that since $P$ is type \type{$\K(M)$}{relative calibre $\omega$}, it is powerfully calibre $\omoneom$ by Lemma~\ref{type_pow_rcalom}. 
Alternatively, $P$ is powerfully calibre $\omoneom$ by applying  Lemma~\ref{prod_CSB}  to $P$ with the finer $2^o$ and CSBS topology.

Finally, if each $P_n$ is $\sigma$-compact, then $M$ is $\sigma$-compact, so $\omega \tq (M,\K(M))$.
Likewise, if each $P_n$ is analytic, then $M$ is analytic, so $\omega^\omega \tq (M,\K(M))$. 
Applying (b) then gives the desired types for $P$.
Note that $\omega$ is calibre $\omega_1$, as is $\omega^\omega$ provided $\omega_1 <\mathfrak{b}$. Hence, under conditions (i) or (ii), $P$ is type \type{calibre $\omega_1$}{relative calibre $\omega$}, and so by Lemmas~\ref{calibre_types} and~\ref{calom1om1omom_prod}, we see that $P$ is indeed calibre $(\omega_1, \omega_1, \omega,\omega)$ and   productively calibre $\omoneom$. 
\end{proof}

\section{Product Examples}\label{sec:exs}

\subsection{Productive and Powerful Examples}
We apply our techniques for showing that directed sets are powerfully or productively calibre $\omoneom$ to two interesting examples from \cite{LV}. In both cases we show the standard topology on the relevant directed sets is second countable, but not CSBS, and so we can not apply the `standard' topological argument. 

\begin{exam}[The ideal of polynomial growth]
Let $\mathcal{I} = \bigcup_{c<\omega} \mathcal{I}_c$ be the ideal of subsets of $\omega$ where $\mathcal{I}_c = \{A\subseteq \omega : |A\cap 2^n| \le n^c,\ \forall n\ge 2\}$. Then

(a) the natural topology on $\mathcal{I}$ is $2^o$ but not CSBS, however

(b) each $\mathcal{I}_c$ is relative calibre $\omega$ in $\mathcal{I}$, hence 

(c) $\mathcal{I}$ is productively and powerfully calibre $\omoneom$.
\end{exam}
\begin{proof}
This is Example~1 from \cite{LV}, where it is shown that each infinite $\mathcal{S}\subseteq \mathcal{I}_c$ has an infinite subset $\mathcal{S}'$ such that $\bigcup\mathcal{S}' \in \mathcal{I}_{c+1}$. 
Hence, each $\mathcal{I}_c$ has relative calibre $\omega$ in $(\mathcal{I},\subseteq)$, which is (b). 
Statement (c) then follows from Corollaries~\ref{sigma-prod} and~\ref{sigma_pow}.

For (a), $\mathcal{I}$ is naturally topologized as a subset of the Cantor set, $\mathbb{P}(\omega)=\{0,1\}^\omega$, and so is second countable. 
We show $\mathcal{I}$ is not CSBS. Consider $A_n = [2^n,2^{n+1})\subseteq\omega$ for each $n<\omega$.  Then each $A_n$ is in $\mathcal{I}$ since it is finite, and the sequence $(A_n)_{n<\omega}$ converges to $\emptyset$ in $\mathcal{I}\subseteq \{0,1\}^\omega$.  Let $(A_{n_k})_{k<\omega}$ be any subsequence.  Fix $c<\omega$ and find a $j<\omega$ such that $2^m > 2m^c$ for all $m\ge j$.  If $A\subseteq \omega$ contains every $A_{n_k}$, then we have:
\[|A\cap 2^{n_j+1}| \ge |A_{n_j}| = 2^{n_j} = \frac{2^{n_j+1}}{2} > \frac{2(n_j+1)^c}{2} = (n_j+1)^c,\]
and so $A$ is not in $\mathcal{I}_c$.  Thus $(A_{n_k})_{k<\omega}$ has no upper bound in $\mathcal{I}$, and $\mathcal{I}$ is not CSBS. 
\end{proof}

The second example is the subject of Section~7 from \cite{LV}. 
Observe that it gives a consistent example of a directed set with calibre $(\omega_1,\omega_1,\omega,\omega)$ but not calibre $(\omega_1,\omega_1,\omega)$.

\begin{exam} Let $\alpha>0$ be a countable ordinal. 
Let $\K^{(\alpha)}$ be the subspace of $\K(2^\omega)$ consisting of (countable) compact subsets with Cantor-Bendixson height $<\alpha$. If $\alpha$ is a successor, then $\K^{(\alpha)} =_T [\ctm]^{<\omega}$.

Now suppose  $\alpha$ is a limit, $\lambda$. Then 

(a) 
 $\K^{(\lambda)}$ is $2^o$ but is not CSBS,  is not calibre $(\omega_1,\omega_1,\omega)$, and for no separable metrizable $M$ do we have $\K(M) \tq \K^{(\lambda)}$; however, 

(b) in some finer topology, $\K^{(\lambda)}$ is $2^o$ and CSB, and

(c)  $\K^{(\lambda)}$ is type 
\type{$\omega^\omega$}{relative calibre $\omega$}, so 

(d) $K^{(\lambda)}$ is powerfully calibre $\omoneom$, and 

(e) if $\omega_1 < \mathfrak{b}$, $K^{(\lambda)}$ is calibre $(\omega_1,\omega_1,\omega,\omega)$ and so productively calibre $\omoneom$.
\end{exam}

\begin{proof} Considered as a subspace of $\K(2^\omega)$, it is known that $\K^{(\alpha)}$ is Borel and hence analytic. 
If $\alpha=\beta+1$ is a successor, then Louveau \& Velickovich, \cite{LV}, point out that there is a perfect set of disjoint compact sets of rank $\beta$, and so $\K^{(\alpha)}$ does indeed have maximal Tukey type.

Now fix limit $\lambda$. Pick an increasing  sequence $(\lambda_n)_n$ converging to $\lambda$. Set $P=\K^{(\lambda)}$ and $P_n=\K^{(\lambda_n)}$. Suppose $(K_m)_m$ is a sequence on $P_n$ with limit $K$ (in $P$), say with rank $\beta <\lambda$. Let $K_\infty= K \cup \bigcup_m K_m$. Then $K_\infty$ is an upper bound in $\K(2^{\omega})$ of the sequence $(K_m)_m$ and has Cantor-Bendixson rank $\le \max (\lambda_m, \beta)+1$, which is strictly less than $\lambda$. So $K_\infty$ is an upper bound of $(K_m)_m$ in $P$. Thus $P$ and the $P_n$'s satisfy the conditions of Lemma~\ref{sigma_rel_csbs}, and claims (b)-(e) follow.

To see $\K^{(\lambda)}$ is not CSBS, choose a sequence $(K_n)_n$ of compact subsets of $2^\omega$ such that $K_n$ has Cantor-Bendixson height $\lambda_n$ and is contained in the $1/n$-ball about $0$. Then $(K_n)_n$ is convergent in $\K^{(\lambda)}$ to an element (namely $\{0\}$) of $\K^{(\lambda)}$, but clearly an upper bound of any subsequence $(K_{n_r})_r$ must have  rank at least that of each $K_{n_r}$, which is $\lambda_{n_r}$, and so the upper bound must have rank at least $\lambda$, and so the upper bound is not in $K^{(\lambda)}$.

Now we show $\K^{(\lambda)}$ is not calibre $(\omega_1,\omega_1,\omega)$. Let $S$ be an uncountable subset of $2^\omega$, considered as a subspace of $\K^{(\lambda)}$. Take any uncountable subset $S_0$ of $S$. We show that some countable subset of $S_0$ is unbounded in $\K^{(\lambda)}$. Well $\cl{S_0}$ is an uncountable compact subset of $2^\omega$, so it contains a Cantor subset $K$. Pick a countable subset $D$ of $S_0$ with $\cl{D} \supseteq K$. Then any upper bound for $D$ would contain $K$, and so would not be in $K^{(\lambda)}$. 

Write $P$ for $\K^{(\lambda)}$, and denote by $\tau$ the standard (Vietoris) topology on $P$.
Now suppose, for a contradiction, that $M$ is separable metrizable  and $\K(M) \tq P$. Fix a Tukey map $\psi : P \to \K(M)$ (that is, $\psi$ maps unbounded subsets of $P$ to unbounded subsets of $\K(M)$). Refine the topology on $P$, by ensuring $f=\psi$ is continuous, to get a new topology $\sigma$ so that  $(P,\sigma)$ is separable metrizable. Take any compact subset $K$ of $(P,\sigma)$. Then $\psi(K)$ is compact in $\K(M)$, and so bounded. As $\psi$ is a Tukey map it follows that $K$ must be bounded. In other words, $(P,\sigma)$ is KSB.

Now, the Cantor set $2^\omega$ is a subset of $P$ and, as a subspace of $(P,\sigma)$, has a separable metrizable topology finer than the usual topology. Since it is uncountable, it cannot be scattered, so it contains a subspace homeomorphic to the rationals. Hence the Cantor set, as a subspace of $(P,\sigma)$, contains a compact subset $K$ which has Cantor-Bendixson height $\ge \lambda$. But any upper bound of $K$ must contain $\bigcup K$ and so have Cantor-Bendixson height $\ge \lambda$, meaning it cannot be in $P=\K^{(\lambda)}$.  This contradicts the fact that $(P,\sigma)$ is KSB and shows no $M$ as above can exist.
\end{proof}

\subsection{Non-Powerful, Non-Productive Example} \label{neither_example}

An easy application of the pressing down lemma establishes:
\begin{lemma} \label{stationary_ce}
Each stationary subset of $\omega_1$ has countable extent.
\end{lemma}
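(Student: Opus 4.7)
The plan is to argue by contradiction using Fodor's pressing down lemma. Let $S$ be a stationary subset of $\omega_1$ (with the subspace topology inherited from the order topology) and suppose, for contradiction, that $D \subseteq S$ is closed discrete in $S$ and uncountable. I would begin by restricting to the stationary set $L = S \cap \mathrm{Lim}$, where $\mathrm{Lim}$ denotes the club of countable limit ordinals; $L$ is stationary as the intersection of a stationary set with a club.

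Next, for each $\alpha \in L$ I would produce $f(\alpha) < \alpha$ with $(f(\alpha),\alpha) \cap D = \emptyset$, splitting into two cases. If $\alpha \in D$, then discreteness of $D$ at $\alpha$ supplies a neighborhood of $\alpha$ in $S$ meeting $D$ only at $\{\alpha\}$; any such neighborhood contains one of the form $(\beta,\alpha] \cap S$, so set $f(\alpha) = \beta$. If instead $\alpha \in S \setminus D$, then closedness of $D$ in $S$ supplies a neighborhood of $\alpha$ in $S$ disjoint from $D$, again containing some $(\beta,\alpha] \cap S$, so set $f(\alpha) = \beta$. In either case $(f(\alpha),\alpha) \cap D = \emptyset$, and $f$ is regressive on $L$.

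Applying Fodor's lemma yields a stationary $T \subseteq L$ and an ordinal $\beta$ with $f \equiv \beta$ on $T$, so $(\beta,\alpha) \cap D = \emptyset$ for every $\alpha \in T$. Since $D$ is uncountable, it is unbounded in $\omega_1$, so I can pick $\delta \in D$ with $\delta > \beta$; since $T$ is also unbounded, pick $\alpha \in T$ with $\alpha > \delta$. Then $\delta \in (\beta,\alpha) \cap D$, the desired contradiction. The only mildly delicate point is setting up $f$ uniformly on all of $L$ despite the two cases $\alpha \in D$ and $\alpha \in S \setminus D$, but each case is a routine neighborhood-base manipulation in the order topology, so I anticipate no real obstacle.
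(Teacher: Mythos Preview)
Your proof is correct and follows precisely the approach the paper indicates: the paper does not spell out an argument but merely says the lemma is ``an easy application of the pressing down lemma,'' and your proposal carries out exactly that application. The restriction to $L = S \cap \mathrm{Lim}$, the regressive function built from the closed-discrete hypothesis, and the Fodor step to reach a contradiction are all standard and correctly executed.
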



\begin{lemma} \label{k_limit_of_discrete}
If $M$ is separable metrizable, $D$ is a dense subset of $M$, and $K$ is a compact subset of $M$ disjoint from $D$, then there is a discrete subset $D'$ of $D$ whose closure in $M$ is precisely $D'\cup K$.
\end{lemma}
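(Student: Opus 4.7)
The idea is to approximate $K$ by a countable net of points of $D$, chosen so close to $K$ that points away from $K$ can only see finitely many of them. Fix a compatible metric $d$ on $M$. For each $n \in \N$ use compactness of $K$ to pick a finite $(1/n)$-dense subset $F_n$ of $K$, and for each $k \in F_n$ use density of $D$ to pick a point $d_{n,k} \in D$ with $d(d_{n,k},k) < 1/n$ (if $K = \emptyset$ we just take $D' = \emptyset$ and we are done). Let $D' = \{d_{n,k} : n \in \N,\ k \in F_n\}$.

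That $K \subseteq \overline{D'}$ is immediate: given $k \in K$ and $\varepsilon > 0$, for any $n > 2/\varepsilon$ there is $k' \in F_n$ with $d(k,k') < 1/n$, so $d(k,d_{n,k'}) < 2/n < \varepsilon$.

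The main point, which handles both the closure computation and the discreteness of $D'$ in one stroke, is the following distance estimate. Suppose $x \in M \setminus K$; since $K$ is closed (compact in Hausdorff), $r := d(x,K) > 0$. For any index $(n,k)$ with $n > 2/r$ we have
\[ d(d_{n,k},x) \ge d(k,x) - d(k,d_{n,k}) > r - 1/n > r/2, \]
and only finitely many indices have $n \le 2/r$ (each $F_n$ is finite). So the open ball of radius $r/2$ about $x$ meets $D'$ in a finite set. This gives both conclusions: (i) if $x \notin D' \cup K$, then after shrinking the ball to exclude the finitely many stray points of $D'$ we see $x \notin \overline{D'}$, so $\overline{D'} \subseteq D' \cup K$; and (ii) if $x \in D'$ (so in particular $x \notin K$), the same argument gives a neighborhood of $x$ meeting $D'$ only in $\{x\}$, so $D'$ is discrete.

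There is no real obstacle here beyond bookkeeping; the only thing to watch is the degenerate case $K = \emptyset$, covered above, and the fact that different pairs $(n,k)$ may produce the same point of $D$, which is harmless since we treat $D'$ as a set.
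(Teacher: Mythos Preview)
Your proof is correct and follows essentially the same approach as the paper: both construct $D'$ as a countable collection of points of $D$, each within $1/n$ of a prescribed point of $K$, and use this distance control to show that all accumulation points of $D'$ lie in $K$. The only cosmetic difference is that the paper enumerates a countable dense subset of $K$ (with repetition) as a single sequence $(x_n)_n$ and picks one $d_n$ per term, whereas you use finite $(1/n)$-nets $F_n$ at each stage; your explicit distance estimate $d(d_{n,k},x) > r/2$ is a slightly more quantitative version of the paper's observation that a convergent subsequence $(d_{n_k})_k$ forces $(x_{n_k})_k$ to converge to the same limit in $K$.
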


\begin{proof}
Fix a compatible metric $\rho$ for $X$. Let $A$ be a countable dense subset of $K$. Enumerate $A=\{x_n\}_n$ so that each element is repeated infinitely many times. For each $n$, pick $d_n$ in $D$ such that $\rho(d_n,x_n) < 1/n$. Let $D'=\{d_n\}_n$.  Clearly $K$ is contained in the closure of $D'$, but note also that if $(d_{n_k})_k$ converges to $x$ in $M$, then so does $(x_{n_k})_k$, and so all limit points of $D'$ are in $K$.  It follows that the closure of $D'$ is contained in $D' \cup K$, and since $D'$ is disjoint from $K$, then it also follows that $D'$ is discrete.
\end{proof}

In the next example, we write $\CD(Y)$ for the directed set of closed discrete subsets of a space $Y$, ordered by inclusion. 

\begin{exam}\label{ex:neither}
There is a directed set $P$ which is calibre $\omoneom$ but is neither powerfully calibre $\omoneom$ nor productively calibre $\omoneom$. 
\end{exam}

\begin{proof} Let $I$ be the isolated points in $\omega_1$.
Write $\omega_1$ as a disjoint union of stationary (and co-stationary) sets: $\omega_1 = \bigcup_n S_n'$. Let $S_n = S_n' \cup I$, and let $X_n=M(\omega_1,S_n)$, that is, $\omega_1$ with its topology refined by isolating the points in $S_n$. We consider $P = \K(X)$, where $X=\bigoplus_n X_n$.

$\K(S_1)$ is calibre $\omoneom$ since $S_1$ is stationary (and co-stationary), but $\K(X) \times \K(S_1) =_T \K(X \times S_1)$ is not calibre $\omoneom$ according to Lemma~\ref{kx_ce} since $\K(X\times S_1)$ does not have countable extent.  Indeed, $\{(\alpha,\alpha) : \alpha \in S_1\}$ is an uncountable closed discrete subset of $X_1\times S_1$, which is a closed subset of $X\times S_1$, which in turn is a closed subset of $\K(X\times S_1)$.  Similarly $\K(X)^\omega =_T \K(X^\omega)$ does not have calibre $\omoneom$ because $\{ (\alpha, \alpha, \ldots) : \alpha \in \omega_1\}$ is an uncountable closed discrete subset of $\prod_n X_n$, which is a closed subset of $X^\omega$, which in turn is a closed subset of $\K(X^\omega)$.  Therefore, $\K(X)$ is neither productively, nor powerfully calibre $\omoneom$.

It remains to show that $\K(X)$ has calibre $\omoneom$.
Since $\K(X) = \bigcup_n \K(X_1 \oplus \cdots \oplus X_n)$, we see that $\K(X)$ has calibre $\omoneom$ provided each $\K(X_1 \oplus \cdots \oplus X_n)=_T \K(X_1) \times \cdots \times \K(X_n)$ has calibre $\omoneom$. This is immediate from the following three claims:
\begin{itemize}
    \item[(a)] for any stationary (and co-stationary) subset $S$ of $\omega_1$ containing $I$, we have $\K(Y) =_T \CD(S)$, where $Y=M(\omega_1,S)$; 
    
    \item[(b)] for any subsets $S_1, \ldots, S_n$  of $\omega_1$, we have $\CD(S_1) \times \cdots \times \CD(S_n) =_T \CD(S_1 \oplus \cdots \oplus S_n)$; and

    \item[(c)]  if $S_1, \ldots , S_n$ are subsets of $\omega_1$ whose union is co-stationary, then $\CD(S_1 \oplus \cdots \oplus S_n)$ has calibre $\omoneom$.
\end{itemize}
Claim (b) is clear. We prove (a) and deduce (c) with the assistance of an additional claim (d). 

\medskip

{\noindent \bf For (a):}  
Define $\phi_1 : \CD(S) \to \K(Y)$ by $\phi_1(E) = \cl{E}^Y$ and $\phi_2 : \K(Y) \to \CD(S)$ by $\phi_2(K)= K \cap S$.  Then $\phi_2$ is well-defined since, for any compact subset $K$ of $Y$, the topologies $K$ inherits from $Y$ and $\omega_1$ coincide.  To see that $\phi_1$ is well-defined, first notice that for any closed subset $E$ of $S$, the subspaces $\cl{E}^Y$ and $\cl{E}^{\omega_1}$ are equal as sets, but if $E$ is also discrete, then in fact, their topologies coincide as well. By Lemma~\ref{stationary_ce}, each closed discrete subset $E$ of $S$ is countable, so $\cl{E}^{\omega_1}$ is compact and $\phi_1$ is well-defined.

Clearly $\phi_1$ and $\phi_2$ are order-preserving, so it suffices to show that their images are cofinal in $\K(Y)$ and $CD(S)$, respectively.  Since $\phi_2(\phi_1(E))=E$ for every $E$ in $\CD(S)$, we see that, in fact, $\phi_2$ is onto.

Now we check that the image of $\phi_1$ is cofinal.  Let $K$ be a compact subset of $Y$, and let $E_0 = \phi_2(K)\in CD(S)$.  Then $K\cap S = E_0$ is contained in $\phi_1(E_0)$.  We aim to find a closed discrete subset $E_1$ of $S$ such that $\phi_1(E_1)$ contains $K_1 = K \setminus S$. Then $E = E_0 \cup E_1$ will be a closed discrete subset of $S$ such that $\phi_1(E)$ contains $K$.

Note that $K_1$ is compact as a subspace of $Y$ and so also compact as a subpace of $\omega_1$.  Hence, $K_1$ is contained in some interval $[0,\alpha]$.  Now, $[0,\alpha]$ is countable and first countable, so it is separable and metrizable.  Since $S$ contains $I$, then $S\cap [0,\alpha]$ is dense in $[0,\alpha]$, and since $K_1$ is disjoint from $S$, then Lemma~\ref{k_limit_of_discrete} provides a discrete subset $E_1$ of $S\cap [0,\alpha]$ whose closure in $[0,\alpha]$ is $E_1 \cup K_1$.  Since $K_1$ is disjoint from $S$, then $E_1$ is closed in $S$, and $\cl{E_1}^Y = \cl{E_1}^{[0,\alpha]}$.  Hence, $K_1$ is contained in $\phi_1(E_1)$, as desired.

\smallskip
{\noindent \bf For (c):} 
Let $S_1,\ldots, S_n$ be stationary subsets of $\omega_1$ such that $\bigcup_i S_i$ is co-stationary.  Let $\{E_\alpha : \alpha<\omega_1\}$ be an uncountable family of closed discrete subsets of $S_1 \oplus \cdots \oplus S_n$.  Write $E_\alpha = E_\alpha^1 \oplus \cdots \oplus E_\alpha^n$ where $E_\alpha^i$ is a closed discrete subset of $S_i$.  We may assume that for every $i\le n$ and every $\gamma<\omega_1$, there are only countably many $E_\alpha^i$ contained in $[0,\gamma]$ (for example, by adding one point from $S_i \cap [\alpha,\omega_1)$ to $E_\alpha^i$).

By applying claim (d) below repeatedly, we can then find uncountable subsets $A'_1 \supseteq A'_2 \supseteq \cdots \supseteq A'_n$ of $\omega_1$ and $\gamma_1, \ldots, \gamma_n < \omega_1$ such that for each $i$ we have $E_\alpha^i \not\subseteq [0,\gamma_i]$ for all $\alpha$ in $A'_i$ and $\sup E_\alpha^i < \min(E_\beta^i \setminus [0,\gamma_i])$ whenever $\alpha<\beta$ in $A'_i$.  Let $\gamma = \max\{\gamma_1,\ldots,\gamma_n\}$.  We can find an uncountable subset $A_0$ of $A'_n$ such that $E_\alpha^i \not\subseteq [0,\gamma]$ for any $\alpha$ in $A_0$ and any $i$, and such that whenever $\alpha<\beta$ in $A_0$, we have $u_\alpha < \ell_\beta$; here, $u_\alpha = \max_i \sup E_\alpha^i$ and $\ell_\alpha = \min_i \min(E_\alpha^i\setminus [0,\gamma])$ for any $\alpha$ in $A_0$.

Enumerate $S_i \cap [0,\gamma] = \{s_m^i : m<\omega\}$ for each $i\le n$.  We construct a decreasing sequence $(A_m)_{m<\omega}$ of uncountable subsets of $A_0$ and open neighborhoods $U_m^i$ of $s_m^i$ in $S_i$ as follows.  Assume we have already defined $A_{m-1}$ for some $0<m<\omega$.  Since $E_\alpha^i$ is closed discrete in $S_i$ and $s_m^i$ has a countable neighborhood base in $S_i$ for each $i\le n$, we can find an uncountable subset $A_m$ of $A_{m-1}$ and open neighborhoods $U_m^i$ of $s_m^i$ in $S_i$ such that $U_m^i \cap E_\alpha^i \subseteq \{s_m^i\}$ for every $\alpha$ in $A_m$.

For each $m<\omega$, let $C_m = \{u_\alpha : \alpha\in A_m\}$.  Then $C = \bigcap_m \cl{C_m}^{\omega_1}$ is a cub set, and so is the subset $C'$ of all limit points in $C$.  As $\bigcup_i S_i$ is co-stationary, we can find a $u_\infty$ in $C'$ that is not in any $S_i$.  Hence, we can choose an increasing sequence $(\alpha_m)_m$ such that $\alpha_m$ is in $A_m$ and $(u_{\alpha_m})_m$ converges to $u_\infty$.  Since $\sup E_{\alpha_m}^i \le u_{\alpha_m} < \ell_{\alpha_{m+1}} \le \min(E_{\alpha_{m+1}}^i \setminus [0,\gamma])$ for each $m$, and since $u_\infty$ is not in $S_i$, then we see that $\bigcup_m (E_{\alpha_m}^i \setminus [0,\gamma])$ is closed discrete in $S_i$ for each $i$.  On the other hand, $\bigcup_m (E_{\alpha_m}^i \cap [0,\gamma])$ is also closed discrete in $S_i$ because for any $s_k^i$ in $S_i\cap [0,\gamma]$, we have $U_k^i \cap E_{\alpha_m}^i \subseteq \{s_k^i\}$ for every $m\ge k$ (since $\alpha_m \in A_m \subseteq A_k$).  Thus, $\bigcup_m E_{\alpha_m}$ is closed discrete in $S_1\oplus \cdots \oplus S_n$, which completes the proof of claim (c).

\smallskip

\noindent \textbf{Claim (d):}  Let $S$ be a stationary subset of $\omega_1$, and let $A$ be an uncountable subset of $\omega_1$.  If $\{E_\alpha : \alpha\in A\}$ is a family of closed discrete subsets of $S$ such that $B_\gamma = \{\alpha \in A : E_\alpha \subseteq [0,\gamma]\}$ is countable for each $\gamma<\omega_1$, then there is a $\gamma<\omega_1$ and an uncountable $A' \subseteq A \setminus B_\gamma$ such that $\sup E_\alpha < \min(E_\beta \setminus [0,\gamma])$ whenever $\alpha<\beta$ in $A'$.

\noindent \textbf{For (d):} Let $S$, $A$, $E_\alpha$, and $B_\gamma$ be as in the statement of claim (d).  Fix a bijection $f:\omega_1\to A$ such that $\alpha<\beta$ if and only if $f(\alpha)<f(\beta)$.  Each $\alpha$ in $S$ has a neighborhood that intersects $E_{f(\alpha)}$ in at most one point, and since $S$ is stationary, then by the pressing down lemma, we can find a stationary subset $S'$ of $S$ and a $\gamma<\omega_1$ such that $(\gamma,\alpha]\cap E_{f(\alpha)} \subseteq \{\alpha\}$ for each $\alpha$ in $S'$.  For any $\alpha$ in the uncountable set $S'\setminus f^{-1}[B_\gamma]$, we therefore have $\min(E_{f(\alpha)}\setminus [0,\gamma])\ge \alpha$.  Note that, by Lemma~\ref{stationary_ce}, each $E_\alpha$ is countable.  We can then inductively construct an uncountable subset $S''$ of $S'\setminus f^{-1}[B_\gamma]$ such that $\beta > \sup_{\alpha\in S''\cap\beta} \sup E_{f(\alpha)}$ for each $\beta$ in $S''$.  Let $A' = f[S''] \subseteq A \setminus B_\gamma$.  If $f(\alpha) < f(\beta)$ in $A'$, then we have $\sup E_{f(\alpha)} < \beta \le \min(E_{f(\beta)}\setminus [0,\gamma])$, which completes the proof of  (d).
\end{proof}

\subsection{Powerful not Productive, and $\ssum$-Products} \label{counter_example}

We now show that there is a directed set which is powerfully calibre $\omoneom$ but not productively calibre $\omoneom$. We simultaneously show that the restriction in Theorem~\ref{Sigma_cosmic_CSB} to cosmic directed sets with CSBS is not arbitrary.
Indeed, if $\sum P_\alpha$ has calibre $(\omega_1,\omega)$, then every projection has the same calibre; in particular, every countable subproduct of the $P_\alpha$'s has calibre $(\omega_1,\omega)$, so it is natural to hope that this necessary condition for a $\ssum$-product to have calibre $(\omega_1,\omega)$ is also sufficient. This is not the case, even when all the directed sets, $P_\alpha$, are equal.

\begin{exam}\label{Sigma_ctble}
There is a directed set $P$ such that:

(i) the $\ssum$-product, $P \times \sum \omega^{\omega_1}$ does not have calibre $(\omega_1,\omega)$, but every countable subproduct does have calibre $(\omega_1,\omega)$; and 

(ii) $\sum P^{\omega_1}$ does not have calibre $(\omega_1,\omega)$, but $P^\omega$ has calibre $(\omega_1,\omega)$.

\smallskip

Hence both $P$ and $\sum \omega^{\omega_1}$ are powerfully calibre $\omoneom$ but not productively calibre $\omoneom$.
\end{exam}

\begin{proof}
Assume first that there is a directed set $P$ satisfying (i) and (ii). From (ii) $P$ is powerfully calibre $\omoneom$. Since $\sum \omega^{\omega_1}$ is clearly Tukey equivalent to its countable power, from Theorem~\ref{Sigma_cosmic_CSB} we see that $\sum \omega^{\omega_1}$ is also powerfully calibre $\omoneom$. Now (i) says that neither P nor $\sum \omega^{\omega_1}$ are  productively calibre $\omoneom$, and the final claim is established.

So we need to show that there is a directed set $P$ with properties (i) and (ii). Let $X$ be any subset of the reals which is totally imperfect and has size $\omega_1$ (say an $\omega_1$-sized  subset of a Bernstein set). Index $X=\{x_\alpha : \alpha < \omega_1\}$. For each $\alpha$ let $U_\alpha = \{ x_\beta : \beta \ge \alpha\}$, and refine the standard topology on $X$ (inherited from $\R$) by adding the sets $U_\alpha$, for $\alpha$ in $\omega_1$, as open sets. We write $X$ for the set $X$ with this topology. It is a Hausdorff space.

Let $P=\K(X)$. Then $P$ is a Hausdorff topological directed set with CSB and DK. We verify two claims:
\begin{itemize}
    \item[(a)] $\K(X) \times \sum \omega^{\omega_1}$ does not have calibre $(\omega_1,\omega)$, but
    \item[(b)] $\K(X)^\omega$ is calibre $(\omega_1,\omega)$.
\end{itemize}
Since $\K(X) \tq \omega$, we see that $\sum \K(X)^{\omega_1} \tq \K(X) \times \sum \omega^{\omega_1}$ and $\K(X)^\omega \tq \K(X) \times \omega^\omega$. Hence, claims (i) and (ii) follow from (a) and (b). 

\medskip

\noindent {\bf For (a):} By Lemma~\ref{sigma_DK} below,  and Lemma~\ref{om1om_to_ce}, we know it suffices to show $P \times \sum \omega^{\omega_1}$ contains an uncountable closed discrete set.

For each $\alpha$, fix a decreasing local base, $(B_{\alpha,n})_n$, at $x_\alpha$ such that $B_{\alpha,1} \subseteq U_\alpha$.  Define $\vy_\alpha = (y_{\alpha,\beta})_\beta$ in $\sum \omega^{\omega_1}$ by 
$y_{\alpha,\beta} = \min\{n : x_\alpha\not\in B_{\beta,n}\}$ if $\beta<\alpha$ and $0$ for $\beta \ge \alpha$.
Then let $E=\{ (x_\alpha, \vy_\alpha) : \alpha < \omega_1 \} \subseteq P \times \sum \omega^{\omega_1}$.  Obviously $E$ is uncountable.
We show $E$ is closed discrete in $P\times\sum \omega^{\omega_1}$

Since $X$ is closed in $P = \K(X)$, it suffices to show $E$ is closed and discrete in $X \times \sum \omega^{\omega_1}$.  Fix $(x_\beta, \vz)$ in $X \times \sum \omega^{\omega_1}$.  We need to find an open neighborhood of $(x_\beta,\vz)$ that contains at most one member of $E$. Let $z_\beta = \pi_\beta(\vz)$ and consider $V = V_{\beta,\vz} = B_{\beta,z_\beta} \times \pi_\beta^{-1} \{z_\beta\}$, which contains $(x_\beta,\vz)$.

Suppose $(x_\alpha,\vy_\alpha)$ is in $E \cap V$.  Then $x_\alpha$ is in $B_{\beta,z_\beta} \subseteq U_\beta$, so $\beta \le \alpha$.  If $\beta < \alpha$, then $y_{\alpha,\beta} = \min \{n\in\N : x_\alpha \not\in B_{\beta,n}\} > z_\beta$, which contradicts that $\vy_\alpha$ is in $\pi_\beta^{-1} \{z_\beta\}$.  Thus, $\alpha = \beta$, so $V$ contains at most one member of $E$, namely $(x_\beta, \vy_\beta)$, which proves the claim (since $X$ is $T_1$).

\smallskip

\noindent {\bf For (b):} We will show that $P=\K(X)^\omega$ is first countable and hereditarily ccc (all discrete subsets are countable). Then $P$ has countable extent, is first countable and CSB, and so has calibre $(\omega_1,\omega)$ by Lemma~\ref{ce_to_om1om}. 

That $\K(X)$ (and hence $\K(X)^\omega$) is first countable follows from Lemma~\ref{kx_1ctble}, so we focus on showing $\K(X)^\omega$ contains no uncountable discrete subsets.
It is a standard fact (and straightforward to check) that the countable power $Y^\omega$ of a space $Y$ is hereditarily ccc if and only if every finite power if $Y$ is hereditarily ccc. So we show for every $n$ in $\N$ the space $\K(X)^n$ is hereditarily ccc.

Recall that basic neighborhoods in $\K(X)$ (with the Vietoris topology) have the form $\langle U_1,\ldots, U_n \rangle = \{K\in\K(X) : K \subseteq \bigcup_{i=1}^n U_i,\ K\cap U_i \neq \emptyset \ \forall i=1,\ldots,n\}$ for some $n$ in $\N$ and $U_i$ open in $X$; further, the open sets $U_1, \ldots, U_n$ can be assumed to come from any specified base for $X$.

Suppose, for a contradiction,  $\K(X)^n$ is not hereditarily ccc.  Then there is an uncountable discrete subset $\mathcal{S} = \{\mathbf{K}_\lambda = (K^\lambda_1,\ldots,K^\lambda_n) : \lambda < \omega_1\}$ of $\K(X)^n$, enumerated in a one-to-one fashion.  So for each $\lambda < \omega_1$, there is a basic open $\mathbf{V}_\lambda = \prod_{i=1}^n \langle V^\lambda_{i,1}, \ldots, V^\lambda_{i,m_\lambda} \rangle$ such that $\mathbf{V}_\lambda \cap \mathcal{S} = \{\mathbf{K}_\lambda\}$.

Let $\B$ be a countable base for the usual topology on $X = \{x_\alpha : \alpha < \omega_1 \} \subseteq \R$, and define $C(\alpha) = \{x_\beta : \beta < \alpha\} = X \setminus U_\alpha$ for each $\alpha < \omega_1$.  Then we can choose each $V^\lambda_{i,j}$ to have the form $V^\lambda_{i,j} = B^\lambda_{i,j} \setminus C(\alpha^\lambda_{i,j})$ for some $B^\lambda_{i,j }\in \B$ and $\alpha^\lambda_{i,j} < \omega_1$. By replacing $\mathcal{S}$ with an appropriate uncountable subset, we may assume there is some $m\in\N$ and $B_{i,j}\in\B$ for $i\in\{1,\ldots,n\}$ and $j\in\{1,\ldots,m\}$ such that $m_\lambda = m$ and $B^\lambda_{i,j} = B_{i,j}$ for each $\lambda < \omega_1$.  Then $\mathbf{V}_\lambda$ is uniquely determined 
by the corresponding $n\times m$ matrix $(\alpha^\lambda_{i,j})_{i,j}$.

Consider the countable subset $\{\mathbf{K}_\ell : \ell<\omega\}$ of $\mathcal{S}$.  If there are $k<\ell<\omega$ such that $\alpha^k_{i,j} \le \alpha^\ell_{i,j}$ for each $i,j$, then $V^\ell_{i,j} = B_{i,j} \setminus C(\alpha^\ell_{i,j}) \subseteq B_{i,j} \setminus C(\alpha^k_{i,j}) = V^k_{i,j}$, which implies that $\mathbf{V}_\ell \subseteq \mathbf{V}_k$.  But that is a contradiction since $\mathbf{K}_\ell$ is in $\mathbf{V}_\ell \setminus \mathbf{V}_k$.  Hence, for any $k<\ell<\omega$, we have $\alpha^k_{i,j} > \alpha^\ell_{i,j}$ for some $i = i(k,\ell) \in\{1,\ldots,n\}$ and $j = j(k,\ell) \in \{1,\ldots,m\}$. 
By Ramsey's Theorem, there is an infinite subset $A$ of $\omega$, an $i \in \{1,\ldots,n\}$, and a $j \in \{1,\ldots,m\}$ such that for any $k<\ell$ in $A$, we have $i(k,\ell) = i$ and $j(k,\ell)=j$.  But then $\{\alpha^\ell_{i,j} : \ell\in A\}$ forms an infinite decreasing sequence in $\omega_1$, which contradicts that $\omega_1$ is well-ordered.  Therefore, such $\mathcal{S}$ cannot exist.
\end{proof}

\begin{lemma} \label{sigma_DK}
$\sum_\alpha P_\alpha$ is DK if every $P_\alpha$ is DK.
\end{lemma}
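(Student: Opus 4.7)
The plan is to unpack the definition of DK and exploit the fact that a point $p$ in $\sum_\alpha P_\alpha$ has countable support, which forces any point below $p$ to have support contained in $\supp(p)$.

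First I would fix $p = (p_\alpha)_\alpha$ in $\sum_\alpha P_\alpha$ and set $A = \supp(p)$, which is countable. For any $q = (q_\alpha)_\alpha$ with $q \le p$ we have $q_\alpha \le p_\alpha$ coordinatewise, and for $\alpha \notin A$ this gives $q_\alpha \le 0_\alpha$, hence $q_\alpha = 0_\alpha$ (since $0_\alpha$ is the minimum of $P_\alpha$). Consequently $\supp(q) \subseteq A$, so the down set of $p$ in $\sum_\alpha P_\alpha$ coincides as a set with $\prod_{\alpha \in A} \down{p_\alpha} \times \prod_{\alpha \notin A} \{0_\alpha\}$, viewed inside $\prod_\alpha P_\alpha$. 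In particular, every element of this product automatically has countable support and therefore already lies in $\sum_\alpha P_\alpha$.

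Next I would observe that the topology on $\sum_\alpha P_\alpha$ is the subspace topology inherited from the product topology on $\prod_\alpha P_\alpha$, so the subspace topology $\down{p}$ receives from $\sum_\alpha P_\alpha$ is the same as the one it receives as a subset of $\prod_\alpha P_\alpha$. Under this identification $\down{p}$ is literally the product $\prod_\alpha D_\alpha$, where $D_\alpha = \down{p_\alpha}$ for $\alpha \in A$ and $D_\alpha = \{0_\alpha\}$ otherwise.

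Finally, each factor $D_\alpha$ is compact: the singletons $\{0_\alpha\}$ trivially, and each $\down{p_\alpha}$ by the DK assumption on $P_\alpha$. Tychonoff's theorem then delivers compactness of $\prod_\alpha D_\alpha = \down{p}$, completing the proof. There is no real obstacle here; the only point worth being careful about is the standing assumption earlier in the section that each $P_\alpha$ has a minimum element $0_\alpha$, which is precisely what lets us conclude $\supp(q) \subseteq \supp(p)$ and thereby realize $\down{p}$ as a genuine Tychonoff product of compacta.
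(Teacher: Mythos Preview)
Your proof is correct and follows essentially the same approach as the paper: identify $\down{p}$ with the full product $\prod_\alpha \down{p_\alpha}$ (noting that $\down{p_\alpha}=\{0_\alpha\}$ for $\alpha\notin\supp(p)$, so this product sits inside $\sum_\alpha P_\alpha$) and apply Tychonoff. You simply spell out in more detail why $\down{p}\subseteq\sum_\alpha P_\alpha$ and why the subspace and product topologies agree on it, points the paper leaves implicit.
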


\begin{proof}
For any $\mathbf{p} = (p_\alpha)_\alpha$ in $\sum P_\alpha$, the down set, $\down{\mathbf{p}}$, of $\mathbf{p}$ is contained in $\sum_\alpha P_\alpha$ and is equal to $\prod_\alpha (\down{p_\alpha})$ (in $\prod_\alpha P_\alpha$).  Hence, if each $\down{p_\alpha}$ is compact, then so is $\down{\mathbf{p}}$.
\end{proof}

\section{Isbell's Examples, and $\sum \omega^{\omega_1}$}\label{sec:isbell}

Recall Isbell's ten directed sets consisted of a first group of $\mathbf{1}$, $\omega$, $\omega_1$, $\omega \times \omega_1$, $[\omega_1]^{<\omega}$, and a second group, $\omega^\omega$, $\sum \omega^{\omega_1}$, $\text{NWD}$, $\mathcal{Z}_0$ and $\ell_1$.   Isbell observed that, among the second group, $\omega^\omega$ and $\sum \omega^{\omega_1}$ have a certain property preserved by Tukey quotients, but showed $\mathcal{Z}_0$ and $\ell_1$ do  not have this property. 
Fremlin added $\mathcal{E}_\mu$, the ideal of all compact subsets of $I$ which are null sets (with respect to the standard Lebesgue measure, $\mu$). Among other things he showed $\mathcal{E}_\mu$ is strictly above $\omega^\omega$, below both $\text{NWD}$ and $\mathcal{Z}_0$, and these latter two directed sets are below $\ell_1$. \cite{TodSol} and \cite{Matrai} showed that the mentioned relations between these five directed sets ($\omega^\omega$, $\mathcal{E}_\mu$, $\text{NWD}$, $\mathcal{Z}_0$ and $\ell_1$) are all strict, and no other relations hold. 

 We show $\sum \omega^{\omega_1}$ is strictly above $\omega^\omega$ and is incomparable with each of $\mathcal{E}_\mu$, $\text{NWD}$, $\mathcal{Z}_0$ and $\ell_1$. 
Clearly $\sum\omega^{\omega_1} \tq \omega^\omega$. Recalling that each of $\omega^\omega$, $\mathcal{E}_\mu$, $\text{NWD}$, $\mathcal{Z}_0$ and $\ell_1$ is $2^o$ and CSBS, and $\mathcal{E}_\mu$ is below all of $\text{NWD}$, $\mathcal{Z}_0$ and $\ell_1$, our claim follows from the next two results.

\begin{proposition}\label{no_csbs_above}
Let $P$ be a $2^o$ and CSBS topological directed set. Then $P \not\tq \sum \omega^{\omega_1}$.
\end{proposition}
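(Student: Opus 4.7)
The plan is to argue by contradiction. Suppose $P$ is $2^o$ and CSBS and $P \tq \sum\omega^{\omega_1}$. Since $\sum\omega^{\omega_1}$ is Dedekind complete --- any bounded subset has a coordinatewise supremum, still of countable support --- Lemma~\ref{tq_dc} yields an order-preserving cofinal map $\phi : P \to \sum\omega^{\omega_1}$, and this is what I would work with throughout.

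First, I would extract an $\omega_1$-indexed family in $P$ that mirrors the coordinate structure of the target. For each $\alpha<\omega_1$, cofinality of $\phi$ delivers $p_\alpha \in P$ with $\phi(p_\alpha)(\alpha) \ge 1$. The countable-support constraint on $\sum\omega^{\omega_1}$ guarantees that any single $p^* \in P$ can serve as $p_\alpha$ for only countably many $\alpha$ (namely those in $\mathrm{supp}(\phi(p^*))$), so by pigeonhole the family $\{p_\alpha : \alpha<\omega_1\}$ takes $\omega_1$ distinct values; after passing to a subfamily I assume the $p_\alpha$ are distinct.

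Next I would deploy the topological hypotheses on this family. Since $P$ is $2^o$ it is first countable and, being hereditarily Lindel\"of, has countable extent, so the uncountable set $\{p_\alpha\}$ has a condensation point $q$ and admits a convergent sequence $p_{\alpha_n}\to q$ with the $\alpha_n$ distinct. CSBS then extracts a bounded subsubsequence $p_{\alpha_{n_k}} \le u$ in $P$, and order-preservation of $\phi$ gives $\{\alpha_{n_k}\} \subseteq \mathrm{supp}(\phi(u))$. This is the basic extraction: one use of CSBS absorbs an infinite countable block of coordinates into the support of a single element of $\sum\omega^{\omega_1}$.

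I would then iterate this extraction transfinitely: at stage $\beta<\omega_1$, apply it to $\{p_\alpha : \alpha \notin S_\beta\}$, where $S_\beta = \bigcup_{\gamma<\beta}\mathrm{supp}(\phi(u_\gamma))$ remains countable at every countable stage, producing a new bound $u_\beta$ capturing a fresh countable block of $\alpha$'s. The resulting family $\{u_\beta : \beta<\omega_1\}$ again takes $\omega_1$ distinct values in $P$ (by the same countable-support pigeonhole applied to $\phi(u_\beta)$), so one can iterate the extraction at the meta-level on the $u_\beta$'s themselves. The main obstacle I anticipate is converting this iteration into a genuine contradiction: each individual CSBS application only yields countable progress, and a naive length-$\omega$ recursion therefore closes off short of $\omega_1$. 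The critical step will be to leverage the uncountable meta-family $\{u_\beta\}$, via a diagonal selection keeping the $\phi$-supports pairwise disjoint beyond a fixed countable root, to extract an uncountable closed discrete subset of $P$ --- which would contradict the countable extent forced by $2^o$. Making that diagonalization precise, in particular ruling out accumulation within the selected subfamily by reapplying CSBS and tracking the support bookkeeping so that any bounded subsubsequence would impose an uncountable support on some single $\phi(w)$, is the delicate technical heart of the argument.
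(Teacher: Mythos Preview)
Your proposal has a genuine gap at exactly the point you flag as ``the delicate technical heart.'' You want to select uncountably many $u_\beta$ with pairwise disjoint $\phi$-supports and argue that this subfamily is closed discrete in $P$. But support disjointness in $\sum\omega^{\omega_1}$ says nothing about the topology of $P$: since $P$ is $2^o$, any uncountable subset accumulates, so you get a convergent sequence and, by CSBS, a bounded subsequence $(u_{\beta_k})_k \le w$. Order-preservation then gives $\mathrm{supp}(\phi(w)) \supseteq \bigcup_k \mathrm{supp}(\phi(u_{\beta_k}))$, a \emph{countable} union of countable sets --- so $\phi(w)$ sits comfortably in $\sum\omega^{\omega_1}$ and no uncountable support is ever forced. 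You are only ever bounding countably many elements at a time, and that is never enough. More fundamentally, a direct internal argument of this shape cannot succeed: by Theorem~\ref{Sigma_cosmic_CSB}, $\sum\omega^{\omega_1}$ is itself calibre $\omoneom$ (indeed powerfully so), so every Tukey-downward property that $P$ inherits from being $2^o$ and CSBS is already enjoyed by $\sum\omega^{\omega_1}$.

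The paper's proof therefore brings in an external witness. It uses the directed set $Q=\K(X)$ of Example~\ref{Sigma_ctble}, which is first countable, hereditarily ccc, and CSBS, yet $Q\times\sum\omega^{\omega_1}$ is \emph{not} calibre $\omoneom$. If $P\tq\sum\omega^{\omega_1}$ then $Q\times P\tq Q\times\sum\omega^{\omega_1}$, so $Q\times P$ would fail calibre $\omoneom$; but $Q\times P$ is first countable, hereditarily ccc (the product of a first countable hereditarily ccc space with a $2^o$ space remains so), and CSBS, hence has calibre $\omoneom$ by Lemma~\ref{ce_to_om1om} --- contradiction. The essential idea you are missing is that the separation between $P$ and $\sum\omega^{\omega_1}$ is only visible after multiplying by a carefully constructed third object.
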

\begin{proof} Suppose, for a contradiction, that $P \tq \sum\omega^{\omega_1}$.   
Let $Q=\K(X)$ be as in Example~\ref{Sigma_ctble}. Then $Q$ is first countable, hereditarily ccc and CSBS, but $Q \times \sum \omega^{\omega_1}$ is not calibre $\omoneom$. Now, on the one hand, $Q \times P \tq Q \times \sum \omega^{\omega_1}$, and so $Q \times P$ is not calibre $\omoneom$.  On the other hand, $Q \times P$ is first countable, hereditarily ccc (the product of a first countable and hereditarily ccc space with a $2^o$ space is first countable and hereditarily ccc) and CSBS, and so must be calibre $\omoneom$, which is a contradiction.
\end{proof}

Define $\mathop{cov}(\mathcal{N})$, the covering number of the ideal of null sets, to be the smallest cardinal $\kappa$ such that there is a cover of $I=[0,1]$ by $\kappa$-many null sets. Then $\omega_1 \le \mathop{cov}(\mathcal{N}) \le \ctm$, and it is consistent, for example if $\text{MA}+\neg\text{CH}$ holds, that $\omega_1 < \mathop{cov}(\mathcal{N})$.
\begin{proposition}[Assuming $\omega_1 < \mathop{cov}(\mathcal{N})$] \label{Emu}\ 
$\sum \omega^{\omega_1} \not\tq \mathcal{E}_\mu$.
\end{proposition}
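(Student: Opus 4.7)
Suppose for contradiction that $\sum\omega^{\omega_1}\tq\mathcal E_\mu$. First I would verify that $\mathcal E_\mu$ is Dedekind complete: if $\mathcal F\subseteq \mathcal E_\mu$ is bounded above by $K\in\mathcal E_\mu$, then $\overline{\bigcup\mathcal F}\subseteq K$ is compact and null, hence belongs to $\mathcal E_\mu$ and is the supremum of $\mathcal F$. By Lemma~\ref{tq_dc} there is an order-preserving $\phi:\sum\omega^{\omega_1}\to\mathcal E_\mu$ with cofinal image, and (equivalently, as in the remarks after Lemma~\ref{Tukey_pres_cal}) a Tukey map $\psi:\mathcal E_\mu\to\sum\omega^{\omega_1}$ sending unbounded sets to unbounded sets. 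I would also invoke the standard identity $\mathop{cov}(\mathcal N)=\mathop{cov}(\mathcal E_\mu)$ (every null set in $I$ is contained in an $F_\sigma$ null set), so the hypothesis means that $I$ is not the union of any $\omega_1$-sized family of compact null subsets.

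My aim is to produce exactly such an $\omega_1$-cover. Using $\psi$, for each $x\in I$ set $f_x:=\psi(\{x\})\in\sum\omega^{\omega_1}$ with countable support $S_x$, and put $\alpha_x:=\sup S_x+1<\omega_1$. For each pair $(\alpha,g)$ with $\alpha<\omega_1$ and $g\in\omega^{[0,\alpha)}$ (extended by zero), define
\[
B_{\alpha,g}:=\{\,x\in I : S_x\subseteq [0,\alpha)\text{ and }f_x\le g\,\}.
\]
Then $\psi$ maps $\{\{x\}:x\in B_{\alpha,g}\}$ into the bounded interval $\down{g}$ of $\sum\omega^{\omega_1}$; by the contrapositive of the Tukey property of $\psi$, the set $\{\{x\}:x\in B_{\alpha,g}\}$ is bounded in $\mathcal E_\mu$, so $\overline{B_{\alpha,g}}$ is a compact null subset of $I$ and in particular $B_{\alpha,g}$ is null. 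Every $x\in I$ lies in $B_{\alpha_x,f_x}$, so $I=\bigcup_{\alpha,g}B_{\alpha,g}$.

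The main obstacle is that the parameter set $\{(\alpha,g)\}$ has cardinality $\omega_1\cdot\mathfrak c=\mathfrak c$, not $\omega_1$. To reduce to the required $\omega_1$-sized cover I would, for each fixed $\alpha<\omega_1$, try to consolidate the family $\{B_{\alpha,g}:g\in\omega^{[0,\alpha)}\}$ into a single compact null set $L_\alpha$. This reduces to showing that the image $\phi(\sum\omega^{[0,\alpha)})$ is bounded in $\mathcal E_\mu$ by some $L_\alpha$; since $\sum\omega^{[0,\alpha)}\cong\omega^\alpha$ is Tukey-equivalent to $\omega^\omega$ and $\omega^\omega\not\tq\mathcal E_\mu$ (Fremlin's strict inequality), the image already fails to be cofinal in $\mathcal E_\mu$. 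The hard part of the argument, which I expect to be where the hypothesis $\omega_1<\mathop{cov}(\mathcal N)$ enters most essentially, is promoting this non-cofinality to outright boundedness of $\phi(\omega^\alpha)$ in $\mathcal E_\mu$; once that is established, the $\omega_1$-family $\{L_\alpha\}_{\alpha<\omega_1}$ covers $I$ via the factorization $x\in\phi(f_x)\subseteq L_{\alpha_x}$, contradicting $\omega_1<\mathop{cov}(\mathcal N)$.
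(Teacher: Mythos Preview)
Your setup is sound up to the consolidation step, but the step you flag as ``hard'' is not merely hard---it is false, and this is precisely where the real argument lies. You hope to show that for each $\alpha$ the image $\phi(\omega^{[0,\alpha)})$ is \emph{bounded} in $\mathcal E_\mu$, i.e.\ that $A_\alpha:=\bigcup\phi(\omega^{[0,\alpha)})$ is contained in a compact null set. But non-cofinality does not imply boundedness in $\mathcal E_\mu$: the map $n\mapsto\{q_1,\ldots,q_n\}$ (with $\{q_n\}$ a dense countable set) already gives an order-preserving $\omega\to\mathcal E_\mu$ with unbounded, non-cofinal image. Worse, under the contradiction hypothesis the sets $A_\alpha$ cover $I$ and, being analytic (an $\omega^\omega$-ordered union of compacta), are measurable; so the assumption $\omega_1<\mathop{cov}(\mathcal N)$ forces some $A_{\alpha_0}$ to have \emph{positive} measure. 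Thus the very set you want to bound by a compact null set is provably non-null.

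The paper exploits exactly this. It does not try to make each $A_\alpha$ null; instead it fixes $\alpha_0$ with $\mu^*(A_{\alpha_0})=\delta>0$ and, enumerating $[0,\alpha_0)=\{\alpha_1,\alpha_2,\ldots\}$, recursively chooses coordinates $\sigma_{\alpha_n}$ so that each partial cylinder $C_n=\bigcup\{\phi_0(\tau):\tau_{\alpha_i}\le\sigma_{\alpha_i}\ \forall i\le n\}$ still has outer measure $>\delta/2$. A countable dense subset of $C=\bigcap_n C_n$ is witnessed by points $x_n\in\phi_0(\sigma_n)$ with $\sigma_n\to\sigma$ in $\omega^{[0,\alpha_0)}$, hence bounded by some $\sigma_\infty$; order-preservation then forces the compact null set $\phi_0(\sigma_\infty)$ to contain a set of outer measure $\ge\delta/2$, a contradiction. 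The point you are missing is that the hypothesis $\omega_1<\mathop{cov}(\mathcal N)$ is used not to make the $A_\alpha$ small, but to make one of them large---and the contradiction comes from squeezing that largeness into a single value of $\phi$.
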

\begin{proof}
Suppose, for a contradiction, $\phi : \sum \omega^{\omega_1} \to \mathcal{E}_\mu$ is order preserving and cofinal. For each countably infinite subset $S$ of $\omega_1$, define $e_S : \omega^S \to  \sum \omega^{\omega_1}$ by $e((x_\alpha)_{\alpha \in S}) =(x_\alpha)_{\alpha \in \omega_1}$ where $x_\alpha=0$ if $\alpha \notin S$. Then let $\phi_S = \phi \circ e_S : \omega^S \to \mathcal{E}_\mu$ and define $A_S = \bigcup \phi_S (\omega^S)$. Note that $\phi_S$ is order-preserving, so $A_S$ is analytic because it is separable metrizable and has an $\omega^\omega$-ordered compact cover. 

For infinite $\alpha$ in $\omega_1$, let $\phi_\alpha=\phi_{[0,\alpha)}$ and $A_\alpha=A_{[0,\alpha)}$. Then $\{A_\alpha : \omega \le \alpha < \omega_1\}$ is a cover of $I$ by measurable (because analytic) sets. By hypothesis there is an $\alpha_0$ such that $A=A_{\alpha_0}$ has strictly positive measure, say $\delta>0$. Let $\phi_0=\phi_{\alpha_0}$. Enumerate $[0,\alpha_0)=\{\alpha_1, \alpha_2, \ldots\}$. 
Define $\sigma = (\sigma_{\alpha_n})_n$ in $\omega^{[0,\alpha_0)}$ and subsets $C_n$ of $A$ as follows.
First, note that $A$ is the increasing union, over $r$, of the sets $\bigcup \{\phi_0 (\tau) : \tau \in \omega^{[0,\alpha_0)} \text{ and } \tau_{\alpha_1} \le r\}$. 
So we can pick $\sigma_{\alpha_1}$ sufficiently large that $C_1=\bigcup \{\phi_0 (\tau) : \tau \in \omega^{[0,\alpha_0)} \text{ and } \tau_{\alpha_1} \le \sigma_{\alpha_1}\}$ has outer measure  $>\delta/2$. Then recursively, pick $\sigma_{\alpha_n}$ so that $C_n=\bigcup \{\phi_0 (\tau) : \tau \in \omega^{[0,\alpha_0)} \text{ and } \tau_{\alpha_i} \le \sigma_{\alpha_i}, \, \forall i \le n\}$  has outer measure  $>\delta/2$. 

Let $D=\{x_n :  n \in \N\}$ be a countable dense subset of $C=\bigcap_n C_n$. For each $n$ select $\sigma_n$ in $\omega^{[0,\alpha_0)}$ such that $x_n$ is in $\phi_0(\sigma_n)$.  Since $x_n$ is in $C_n$ and $\phi_0$ is order-preserving, we may select $\sigma_n$ so that it coincides with $\sigma$ on $\{\alpha_1, \ldots, \alpha_n\}$. Then the  $\sigma_n$'s converge to $\sigma$, so there is an upper bound $\sigma_\infty$ of the $\sigma_n$'s in $\omega^{[0,\alpha_0)}$.

But, as $\phi_0$ is order-preserving, $D \subseteq \bigcup_n \phi_0(\sigma_n) \subseteq \phi_0(\sigma_\infty)$, and so the smallest compact subset of $I$ containing $\phi_0(\sigma_\infty)$ must contain $C$ which has strictly positive measure ($\ge \delta/2$). This contradicts the fact that $\phi_0$ maps into $\mathcal{E}_\mu$, the compact measure zero subsets of $I$.
\end{proof}

\end{document}